\def\theequation{\thesection.\@arabic \c@equation}
\def\@citecolor{blue}
\def\@linkcolor{blue}
\def\@urlcolor{blue}
\def\theenumi{\@alph\c@enumi}
\def\theenumii{\@roman\c@enumii}
\def\p@enumii{}
\theoremstyle{plain}
\newtheorem{theorem}[equation]{Theorem}
\newtheorem{lemma}[equation]{Lemma}
\newtheorem{corollary}[equation]{Corollary}
\newtheorem{proposition}[equation]{Proposition}
\theoremstyle{definition}
\newtheorem{conjecture}[equation]{Conjecture}
\newtheorem{remark}[equation]{Remark}
\newtheorem{example}[equation]{Example}
\newtheorem{definition}[equation]{Definition}
\newtheorem{notation}[equation]{Notation}
\newenvironment{notationbox}[1][]{%
    \begin{notation}[#1]\pushQED{\qed}}{\popQED \end{notation}}
\newtheorem{discussion}[equation]{Discussion}
\newenvironment{discussionbox}[1][]{%
    \begin{discussion}[#1]\pushQED{\qed}}{\popQED \end{discussion}}
\newtheorem{observation}[equation]{Observation}
\newtheorem{construction}[equation]{Construction}
\newenvironment{steps}%
    {\setcounter{step}{0}}{} \newcounter{step}
\newcommand{\proofstep}{\par\refstepcounter{step}Step~\thestep.\space\ignorespaces}
\newcommand{\calD}{\mathcal D}
\newcommand{\bbF}{\mathbb F}
\newcommand{\calP}{\mathcal P}
\newcommand{\frakp}{{\mathfrak p}}
\newcommand{\frakq}{{\mathfrak q}}
\DeclareMathOperator{\Sym}{Sym}
\newcommand{\naturals}{\mathbb{N}}
\newcommand{\ints}{\mathbb{Z}}
\def\to{\longrightarrow}
\DeclareMathOperator{\rank}{rk}
\DeclareMathOperator{\height}{ht}
\DeclareMathOperator{\In}{in}
\DeclareMathOperator{\charact}{char}
\newcommand{\define}[1]{\emph{#1}}
\newcommand{\minus}{\ensuremath{\smallsetminus}}
\DeclareMathOperator{\Ass}{Ass}
\DeclareMathOperator{\Frac}{Frac}
\DeclareMathOperator{\GL}{GL}
\newcommand{\hilbertIdeal}{\mathfrak{h}}
\newif\ifreadkumminibib
\title{On polynomial invariant rings in modular invariant theory}
\author{Manoj Kummini}
\address{Chennai Mathematical Institute, Siruseri, Tamilnadu 603103. India}
\email{mkummini@cmi.ac.in}
\author{Mandira Mondal}
\address{Chennai Mathematical Institute, Siruseri, Tamilnadu 603103. India
\small{and} 
Indian Institute of Technology, Jodhpur, Rajasthan 342030. India }
\email{mandira@iitj.ac.in}
\thanks{Both authors were partially supported
by an Infosys Foundation fellowship.}
\thanks{MM thanks the National Board of Higher Mathematics and the
Department of Science and Technology, Government of India for the grant
DST/INSPIRE/04/2021/001549.}
\begin{document}
\begin{abstract}
Let $\Bbbk$ be a field of characteristic $p>0$,
$V$ a finite-dimensional $\Bbbk$-vector-space,
and $G$ a finite $p$-group acting $\Bbbk$-linearly on $V$.
Let $S = \Sym V^*$.
Confirming a conjecture of Shank-Wehlau-Broer, we show that
if $S^G$ is a direct summand of $S$, then $S^G$ is a polynomial ring, in
the following cases:
\begin{enumerate}

\item
$\Bbbk = \bbF_p$ and $\dim_\Bbbk V = 4$; or

\item
$|G| = p^3$.
\end{enumerate}
In order to prove the above result,
we also show that
if $\dim_\Bbbk V^G \geq \dim_\Bbbk V - 2$, then the
Hilbert ideal $\hilbertIdeal_{G,S}$ is a complete intersection.
\end{abstract}

\maketitle

\section{Introduction}
\label{sec:intro}

Let $\Bbbk$ be a field and $V$ a finite-dimensional $\Bbbk$-vector-space.
Let $G\subseteq \GL(V)$ be a finite group.
Then the action of $G$ on $V$ induces an action on $V^*$ which extends to
an action by graded $\Bbbk$-algebra automorphisms of
$S :=\Sym V^*$.
By $S^G$ we denote the ring of invariants.

Suppose that the characteristic of $\Bbbk$ is $p>0$ and that
$G$ is a finite $p$-group.
Various authors have studied necessary and/or sufficient conditions
for the invariant ring $S^G$ to be a polynomial ring; see, e.g.
\cite{ShankWehlauTransferModular1999},
\cite{BroerDirectSummandProperty2005},
\cite{BroerHypersurfaces2006},
\cite{BroerInvThyAbelianTransvGps2010},
\cite{KumminiMondalHilbIdeals20},
\cite{BraunPolynomialringofInvariants2022}, and references therein.

If $S^G$ is a polynomial ring then $S$ is a free $S^G$-module, so $S^G$
is a direct summand of $S$. (See Section~\ref{sec:summand} for (well-known)
remarks about direct summands.) The converse is not in general true; in the
non-modular case, every invariant ring is a direct summand of the
respective symmetric algebra. However, for
actions of finite $p$-groups in characteristic $p>0$, the
Shank-Wehlau-Broer conjecture
(R.~J.~Shank and
D.~L.~Wehlau~\cite[Conjecture~1.1]{ShankWehlauTransferModular1999},
reformulated by
A.~Broer~\cite[Corollary~4, p.~14]{BroerDirectSummandProperty2005})
is the following:
\begin{conjecture}
\label{conjecture:SWB}
$S^G$ is a polynomial ring if it is a direct summand of $S$ as an
$S^G$-module.
\end{conjecture}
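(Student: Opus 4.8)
The plan is to combine Theorem~\ref{theorem:singloc} with the direct summand hypothesis, reducing the conjecture to a single local regularity statement and then controlling that local ring through the Hilbert ideal.

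\emph{Reduction to one prime.} Set $r = \rank_\Bbbk V^G$ and $m = \rank_\Bbbk V - r$; note $\dim R = \dim S = \rank_\Bbbk V$ since $S$ is module-finite over $R$. By Theorem~\ref{theorem:singloc}, and in particular by the criterion~\eqref{theorem:singloc:some} with $W = V^G$, it suffices to prove that $R_\frakp$ is a regular local ring, where $\frakq$ is the $S$-ideal generated by $(V/V^G)^*$ (equivalently, the ideal of linear forms vanishing on the fixed subspace $V^G$) and $\frakp = \frakq \cap R$. Thus the whole problem localizes at the image of $V^G$ in $\Spec R$, precisely the locus over which the finite map $\Spec S \to \Spec R$ is totally ramified (every point of $V^G$ has inertia group $G$); since $R$ is a normal domain integral over $S$, the height of $\frakp$ equals $m$.

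\emph{Consequences of the direct summand property.} First I would record that if $R$ is a direct summand of $S$ as an $R$-module, then the $R$-linear retraction $S \to R$ makes $\homology^i_{\frakm}(R)$ an $R$-module direct summand of $\homology^i_{\frakm}(S)$ for the graded maximal ideal $\frakm$ of $R$; since $S$ is a polynomial ring (hence Cohen--Macaulay) and $\dim S = \dim R$, it follows that $R$ is Cohen--Macaulay. As $R = S^G$ is automatically normal, Serre's criterion gives $\serreR{1}$ and $(\mathrm{S}_2)$ for free, so the cases $m \leq 1$ are immediate, while Cohen--Macaulayness (together with the bireflection constraint it forces on $G$) is the structural foothold for larger $m$. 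The direct summand property also localizes, so $R_\frakp$ is again Cohen--Macaulay and normal.

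\emph{Controlling $R_\frakp$ via the Hilbert ideal.} The heart of the argument is to show $R_\frakp$ is regular. Here I would use the complete intersection statement announced in the abstract: when $\rank_\Bbbk V^G \geq \rank_\Bbbk V - 2$ the Hilbert ideal $\hilbertIdeal_{G,S}$ is a complete intersection, and this — with the direct summand hypothesis forcing $S$ to be free over the polynomial subring generated by a defining regular sequence — yields that $R$ is a polynomial ring in that range, i.e.\ the base case $m \leq 2$. To reach this base case from the totally ramified prime $\frakp$ one descends: complete at $\frakp$ and study the inertia groups $G_\frakQ$ at primes $\frakQ \subset S$ contracting into the singular locus; each such local situation is the invariant ring of a smaller $p$-group acting with strictly larger fixed space, to which one applies the base case or an inductive hypothesis. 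The hypotheses $\Bbbk = \bbF_p$ with $r = 4$, or $|G| = p^3$, are exactly what make this descent terminate: over $\bbF_p$ decomposition and inertia coincide for the $p$-group, a four-dimensional fixed space bounds the range of codimensions in which regularity must be checked, and the few groups of order $p^3$ can be handled by the complete intersection analysis directly.

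\emph{Main obstacle.} The difficult step is this descent. The Cohen--Macaulayness extracted from the direct summand property does not by itself force regularity of $R_\frakp$ in the intermediate codimensions $2 < \height \leq m$, and $V^G$ need not admit a $G$-stable complement in the modular setting, so one cannot simply split off the fixed directions. Bridging the gap between $\serreR{1}$ (free from normality) and the full $\serreR{m}$ demanded by Theorem~\ref{theorem:singloc} is precisely where the general conjecture resists proof; the hypotheses $r = 4$ over $\bbF_p$ and $|G| = p^3$ are imposed to keep this gap finite and reducible to the complete intersection case.
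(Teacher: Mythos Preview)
The statement you are attempting is a \emph{conjecture}: the paper does not prove Conjecture~\ref{conjecture:SWB} in general, only the special cases recorded in Theorems~\ref{theorem:SWdim4primefield} and~\ref{theorem:smallG}. Your proposal is not a proof either --- you say so yourself in the final paragraph, identifying the bridge from $\serreR{1}$ to $\serreR{m}$ as ``precisely where the general conjecture resists proof.'' So on the status of the full statement there is nothing to compare; both you and the paper leave it open.

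What is worth flagging is that your proposed route to the special cases --- completing at $\frakp$ and descending through inertia subgroups $G_\frakQ$ --- is \emph{not} the route the paper takes, and the paper gives an explicit reason to distrust it. Example~\ref{example:notInertia} exhibits a transvection $p$-group whose inertia subgroup at a linear prime is cyclic generated by a non-pseudoreflection; since the direct summand hypothesis forces the relevant groups to be transvection groups, inertia subgroups are the wrong intermediate objects for an induction. Instead the paper builds a composition series $G_0 \subsetneq \cdots \subsetneq G_k = G$ in which every $G_l$ is itself a transvection group (Proposition~\ref{proposition:ourcompositionseries}) and works one $\ints/p$-step at a time along this series, using Broer's hypersurface theorem together with a detailed analysis of algebra generators and relative Hilbert ideals (Lemmas~\ref{lemma:algGens}--\ref{lemma:ai0}) to force $\hilbertIdeal_{G,S,W}$, and hence $\hilbertIdeal_{G,S}$, to be a complete intersection. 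The case $|G|=p^3$ is handled by showing directly that order-$p^2$ transvection groups already have polynomial invariant rings (Lemma~\ref{lemma:psquare}) and then invoking Proposition~\ref{proposition:allSplit}.

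A minor point: you write ``$r=4$'' with $r = \rank_\Bbbk V^G$, matching the wording in the abstract, but Theorem~\ref{theorem:SWdim4primefield} and its proof concern $\rank_\Bbbk V = 4$; the abstract appears to contain a typo.
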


In this paper we prove a few results in the direction of settling
Conjecture~\ref{conjecture:SWB}.
Let $W \subseteq V^G$ be a subspace.
We recall the following definition
from~\cite[p.~406]{BroerInvThyAbelianTransvGps2010}.
The \define{Hilbert ideal of $G$ in $S$ relative to $W$}, denoted
$\hilbertIdeal_{G,S,W}$,
is the $S$-ideal $((W^\perp S) \cap S^G)S$,
where $W^\perp = \ker (V^* \to W^*)$,
i.e., the subspace of linear forms on $V$ that vanish on $W$.
(We will often call it a \define{relative Hilbert ideal}.)
The \define{Hilbert ideal} $\hilbertIdeal_{G,S }$ is the $S$-ideal
generated by the invariant polynomials of positive degree, i.e., by
$(S^G)_+$.
Note that $\hilbertIdeal_{G,S}$ is $\hilbertIdeal_{G,S,0}$.
If $S^G$ is a direct summand, then every generating set
of $\hilbertIdeal_{G,S}$ (as an $S$-ideal) consisting of homogeneous
invariant elements generates $S^G$ as a $\Bbbk$-algebra;
see~\cite[Theorem~1.6.3]{BensonInvBook1993}
or~\cite[Theorem~2.2.10]{DerksenKemperComputationalInvariantThy2ed2015} for
a proof. Hence if $S^G$ is a direct summand and
$\hilbertIdeal_{G,S }$ is a complete intersection, then $S^G$ is a
polynomial ring. Conversely, if $S^G$ is a polynomial ring then it is a
direct summand of $S$ and
$\hilbertIdeal_{G,S }$ is a complete intersection.

We now state our results about Conjecture~\ref{conjecture:SWB}
when $\dim_\Bbbk V$ is small.
If $\dim_\Bbbk V = 2$, then
there exist $\Bbbk$-linearly independent linear
forms $l_1, l_2 \in S$ such that
$l_1$ and the $G$-orbit product of $l_2$ are in $S^G$;
now apply (b) $\implies$ (a) of
\cite[Theorem~3.9.4]{DerksenKemperComputationalInvariantThy2ed2015}
 to see that $S^G$ is a polynomial ring.
In higher dimensions, we prove the following:

\begin{theorem}
\label{theorem:largeRankVG}
If $\dim V^G \geq \dim V - 2$, then $\hilbertIdeal_{G,S}$ is a complete
intersection.
\end{theorem}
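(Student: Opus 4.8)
The plan is to reduce to the known low-codimension cases by passing to the quotient action on $V/V^G$, and then to handle the residual codimension-$1$ or $2$ situation directly. Write $W = V^G$ and let $m = \rank_\Bbbk V - \rank_\Bbbk W \le 2$. Choose a basis $x_1,\dots,x_n$ of $V^*$ adapted to the filtration, so that $x_{m+1},\dots,x_n$ span $W^*$ and are fixed by $G$, while $x_1,\dots,x_m$ map to a basis of $(V/W)^*$ on which $G$ acts (faithfully, after replacing $G$ by its image). Since $G$ fixes the last $n-m$ variables, $\hilbertIdeal_{G,S}$ is the extension to $S = \Bbbk[x_1,\dots,x_n]$ of the Hilbert ideal $\hilbertIdeal_{G,T}$ of the subring $T = \Bbbk[x_1,\dots,x_m]$, which carries the faithful action of (the image of) $G$. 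Thus it suffices to show $\hilbertIdeal_{G,T}$ is a complete intersection in $T$ when $\rank_\Bbbk T$-worth of variables is $m \le 2$; the extension to $S$ preserves the complete-intersection property because $S$ is a polynomial extension of $T$.

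Next I would dispose of the cases $m = 0$ and $m = 1$. If $m = 0$ then $G$ acts trivially, $\hilbertIdeal_{G,S} = (x_1,\dots,x_n)$, which is visibly a complete intersection. If $m = 1$, then $G$ acts faithfully on a one-dimensional space $T = \Bbbk[x_1]$; a finite $p$-group in $\GL_1(\Bbbk)$ in characteristic $p$ is trivial (the unit group of a field has no $p$-torsion in characteristic $p$), so in fact $m = 1$ cannot occur. Hence the only genuine case is $m = 2$: $G$ acts faithfully on $V/W$ of rank $2$. Here I would invoke Proposition~\ref{proposition:rankTwo}, which asserts that the invariant ring of a finite $p$-group acting on a two-dimensional space is a polynomial ring. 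Consequently $\hilbertIdeal_{G,T}$ is generated by a regular sequence (the two polynomial generators of $T^G$, or rather a homogeneous system of parameters extracted from them), so it is a complete intersection in $T$, and therefore $\hilbertIdeal_{G,S} = \hilbertIdeal_{G,T}S$ is a complete intersection in $S$.

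The step I expect to require the most care is the reduction from $S$ to $T$ and back: one must check that $(S^G)_+ S = (T^G)_+ S$, i.e. that every positive-degree invariant of the $S$-action lies in the ideal generated by positive-degree invariants of the $T$-action together with $x_{m+1},\dots,x_n$, and conversely. The containment $(T^G)_+ \subseteq (S^G)_+$ is clear since $T^G \subseteq S^G$; for the reverse, note that $S^G = \Bbbk[x_{m+1},\dots,x_n] \otimes_\Bbbk T^G$ because the $G$-action on $S$ is the tensor of the trivial action on $\Bbbk[x_{m+1},\dots,x_n]$ with the faithful action on $T$, and invariants of a tensor action (with one factor trivial) split as this tensor product. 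Granting this, $(S^G)_+ S$ is generated by $x_{m+1},\dots,x_n$ together with $(T^G)_+$, and since the $x_i$ for $i > m$ already form part of a regular sequence complementary to any homogeneous system of parameters of $T^G$ inside $T$, the whole collection is a regular sequence in $S$. This gives the complete intersection conclusion. The only subtlety is ensuring the generators of $(T^G)_+$ can be taken to be exactly two elements forming a regular sequence, which is precisely the content of Proposition~\ref{proposition:rankTwo}.
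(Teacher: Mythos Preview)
Your reduction rests on the claim that one can choose $x_{m+1},\dots,x_n \in V^*$ fixed by $G$, so that $S$ decomposes $G$-equivariantly as $T \otimes_\Bbbk \Bbbk[x_{m+1},\dots,x_n]$ with $G$ acting trivially on the second tensor factor. This is false in the modular setting: it would require a $G$-equivariant splitting of the exact sequence $0 \to W^\perp \to V^* \to W^* \to 0$, and Maschke's theorem does not hold for $p$-groups in characteristic $p$. Concretely, take $n=2$, $G = \langle \sigma\rangle \cong \ints/p$ with $\sigma x_1 = x_1$, $\sigma x_2 = x_2 + x_1$. Then $V^G = \Bbbk v_2$, so $m=1$ and $W^\perp = \Bbbk x_1$; but $(V^*)^G = \Bbbk x_1$ as well, so there is no $G$-fixed linear form mapping to a basis of $W^*$. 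Your formula would give $\hilbertIdeal_{G,S} = (x_1,x_2)$, whereas in fact $\hilbertIdeal_{G,S} = (x_1, x_2^p)$. This example also shows that your dismissal of the case $m=1$ is incorrect: the action of $G$ on $W^\perp$ (equivalently on $V/V^G$) is not faithful, so the image being trivial in $\GL_1$ says nothing about $G$ itself.

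The paper's proof replaces your linear forms $x_{m+1},\dots,x_n$ by higher-degree invariants. It introduces the relative Hilbert ideal $\hilbertIdeal_{G,S,V^G}$ and proves two things: (i) this ideal is extended from $S' = \Sym W^\perp$ (Proposition~\ref{proposition:extended}), and (ii) there exist homogeneous $f_{m+1},\dots,f_n \in R$ forming a regular sequence on $S/\hilbertIdeal_{G,S,V^G}$ with $\hilbertIdeal_{G,S} = \hilbertIdeal_{G,S,V^G} + (f_{m+1},\dots,f_n)$ (Proposition~\ref{proposition:Broer22}). Since $x_1 \in S^G$ lies in $\hilbertIdeal_{G,S,V^G}\cap S'$ and $\dim S' \le 2$, that contracted ideal is a complete intersection (mod out $x_1$ and land in a PID), and then so is $\hilbertIdeal_{G,S}$. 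Note also Example~\ref{example:nothGR}, which shows that even the ideal $\hilbertIdeal_{G,S,W}\cap S'$ is in general \emph{not} equal to $\hilbertIdeal_{G,S'}$, so one really cannot reduce to the invariant theory of $T$ alone.
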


Since $V^G \neq 0$, we get, as an immediate corollary:
Assume that $\dim_\Bbbk V \leq 3$.
Then $\hilbertIdeal_{G,S }$ is a complete
intersection;
in particular, if $S^G$ is a direct summand of $S$ then it is a polynomial
ring.

\begin{theorem}\label{theorem:SWdim4primefield}
Assume that $\Bbbk=\mathbb{F}_p$ and $\dim_\Bbbk V=4$.
If $S^G$ is a direct summand of $S$ then it is a polynomial ring.
\end{theorem}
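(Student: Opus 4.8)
The plan is to reduce, via Theorem~\ref{theorem:largeRankVG}, to the case $\rank_\Bbbk V^G = 1$, and then to verify part~\eqref{theorem:singloc:some} of Theorem~\ref{theorem:singloc} for $W = V^G$ by a local computation. Since $G$ is a $p$-group and $\charact\Bbbk = p$, one has $\rank_\Bbbk V^G \ge 1$; if $\rank_\Bbbk V^G \ge 2$, then $\rank_\Bbbk V^G \ge \rank_\Bbbk V - 2$, so $\hilbertIdeal_{G,S}$ is a complete intersection by Theorem~\ref{theorem:largeRankVG}, and as $S^G$ is a direct summand of $S$, the remarks preceding Theorem~\ref{theorem:largeRankVG} show that $S^G$ is a polynomial ring. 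So assume $\rank_\Bbbk V^G = 1$, and put $W = V^G$. By \eqref{theorem:singloc:some}$\Rightarrow$\eqref{theorem:singloc:poly} in Theorem~\ref{theorem:singloc}, it suffices to prove that $R_\frakp$ is a regular local ring, where $\frakq = ((V/W)^*)S$ and $\frakp = \frakq \cap R$. Choose a basis $x_1,x_2,x_3$ of $(V/W)^*\subseteq V^*$ and $x_4\in V^*$ lifting a basis of $W^*$, so $S = \Bbbk[x_1,\dots,x_4]$, $\frakq = (x_1,x_2,x_3)$ and $S/\frakq = \Bbbk[x_4]$. Because $G$ fixes $W$ pointwise, $(V/W)^*$ is $G$-stable, $G$ acts trivially on $W^* = V^*/(V/W)^*$, and $g\,x_4 = x_4+b_g$ with $b_g\in(V/W)^*$; in particular $\frakq$ is $G$-stable and $G$ acts trivially on $\kappa(\frakq)=\Bbbk(x_4)$.

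Hence $R_\frakp = (S_\frakq)^G$. Since $\frakq$ is the only prime of $S$ over $\frakp$, localizing an $S^G$-linear retraction $S\to S^G$ shows $R_\frakp$ is a direct summand of the regular local ring $S_\frakq$, so $\widehat{R_\frakp}$ is a direct summand of $\widehat{S_\frakq}\cong\kappa(\frakq)[[x_1,x_2,x_3]]$; on $\widehat{S_\frakq}$, $G$ acts on $x_1,x_2,x_3$ through the linear action of $\bar G := G/\ker(G\to\GL((V/W)^*)) \hookrightarrow \GL_3(\mathbb{F}_p)$ on $(V/W)^*$, and by $x_4\mapsto x_4+b_g$. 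As $D(\frakq)=G$ acts trivially on $\kappa(\frakq)$, the extension $\kappa(\frakq)/\kappa(\frakp)$ is purely inseparable. Suppose first that it is trivial, i.e.\ $\kappa(\frakp)=\kappa(\frakq)$; then $R_\frakp$ surjects onto $\kappa(\frakq)=\Bbbk(x_4)$, so there is $z\in R_\frakp$ with $z\equiv x_4\pmod{\frakq S_\frakq}$. Since $z$ is $G$-invariant, $\Bbbk(z)\subseteq S_\frakq$ is a $G$-stable coefficient field, $\ker(G\to\GL((V/W)^*))$ acts trivially on $\widehat{S_\frakq}$, and $\widehat{S_\frakq}\cong\Bbbk(z)[[x_1,x_2,x_3]]$ with $\bar G$ acting $\Bbbk(z)$-linearly; thus $\widehat{R_\frakp}$ is the completion, at its homogeneous maximal ideal, of $C^{\bar G}$ where $C:=\Bbbk(z)[x_1,x_2,x_3]$. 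Being a direct summand of $\widehat{S_\frakq}=\widehat C$, it is regular once $C^{\bar G}$ is a polynomial ring over $\Bbbk(z)$; and since the direct summand property descends along the faithfully flat map $C\to\widehat C$, $C^{\bar G}$ is a direct summand of $C$, so Corollary~\ref{corollary:dimTwoThree} applied over the field $\Bbbk(z)$ (note $\rank(V/W)^*=3$) shows $C^{\bar G}$ is a polynomial ring. So $R_\frakp$ is regular, and Theorem~\ref{theorem:singloc} concludes.

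The main obstacle is the case $\kappa(\frakp)\subsetneq\kappa(\frakq)$: then $G$ has no invariant coefficient field on $\widehat{S_\frakq}$, so its action cannot be linearized over a field and the argument above breaks down. I expect this is where the hypothesis $\Bbbk=\mathbb{F}_p$ is essential: since $\mathbb{F}_p$ is perfect, $\kappa(\frakp)=\mathbb{F}_p(x_4^{p^j})$ for some $j\ge 1$, and a natural first move is to pass to the invariants of the elementary abelian normal subgroup $G_0:=\ker(G\to\GL((V/W)^*))$ — on which $S^G$ is still a direct summand of $S^{G_0}$, and whose invariant ring $S^{G_0}$ is a polynomial ring $\Bbbk[x_1,x_2,x_3,N]$ (a Dickson-type norm computation, $N$ the product of the $G_0$-orbit of $x_4$) — thereby reducing to the case in which $G$ acts faithfully on the $3$-dimensional space $(V/W)^*$ over $\mathbb{F}_p$. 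One must then still treat the possibility that $\kappa(\frakq)/\kappa(\frakp)$ stays nontrivial, presumably by exploiting the rigidity of $p$-subgroups of $\GL_3(\mathbb{F}_p)$ together with the fact that $(\widehat{S_\frakq})^G$ is a direct summand of the regular ring $\widehat{S_\frakq}$; carrying this out cleanly is the heart of the proof.
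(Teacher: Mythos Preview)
Your reduction to $\rank_\Bbbk V^G=1$ is fine, and the idea of checking regularity of $R_\frakp$ via Theorem~\ref{theorem:singloc} is reasonable, but the proof is incomplete precisely where you say it is: the case $\kappa(\frakp)\subsetneq\kappa(\frakq)$ is left open, and this is not a corner case. With $\rank_\Bbbk V^G=1$ it is the generic situation: by Proposition~\ref{proposition:Broer22}\eqref{proposition:Broer22:poly} one has $R/\frakp=\Bbbk[f_4]$ for a single homogeneous $f_4\in R$, and $\kappa(\frakp)=\kappa(\frakq)$ forces $\deg f_4=1$, i.e.\ a $G$-invariant linear form of the shape $x_4+\ell$ with $\ell\in\Bbbk\langle x_1,x_2,x_3\rangle$. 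For a $4$-dimensional indecomposable $G$-module (e.g.\ a single Jordan block) no such invariant exists, so your ``easy'' branch simply does not cover the cases that matter. Your proposed fix---passing first to $S^{G_0}$ with $G_0=\ker(G\to\GL((V/W)^*))$---does not remove the difficulty: after replacing $x_4$ by the $G_0$-norm $N$, the quotient $G/G_0$ still acts nontrivially on $N$ modulo the ideal generated by $x_1,x_2,x_3$ whenever the cocycle $g\mapsto b_g$ is nontrivial in $H^1$, so $\kappa(\frakq)/\kappa(\frakp)$ can remain a genuine purely inseparable extension and you are back to the same obstruction with no evident way to linearise.

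The paper does not attempt a local calculation at all. Instead it proves directly that $\hilbertIdeal_{G,S}$ is a complete intersection by induction along the composition series of Proposition~\ref{proposition:ourcompositionseries}: one compares $\hilbertIdeal_{G',S,W}$ and $\hilbertIdeal_{G,S,W}$ for $W=\Bbbk\langle v_4\rangle$ using a carefully chosen generating set of $A=S^{G'}$ and initial-term arguments in the degree-lex order (Lemmas~\ref{lemma:algGens}--\ref{lemma:ai0}). The hypothesis $\Bbbk=\bbF_p$ enters only at Lemma~\ref{lemma:betaGprime}, which forces $\beta_{G'}\le 2$ so that $\hilbertIdeal_{G',S,W}$ is already a complete intersection by Proposition~\ref{proposition:largeRankVGRel}; this is a finite-field counting trick and has no analogue in your local picture. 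If you want to salvage your approach, you would need an argument showing that a direct summand of $\Bbbk(x_4)[[x_1,x_2,x_3]]$ under a \emph{non-linearisable} $p$-group action is regular---this is essentially as hard as the original problem.
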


When $|G|$ is small, we prove the following:

\begin{theorem}
\label{theorem:smallG}
Assume that $|G | = p^3$. If $S^G$ is a direct summand of $S$, then $S^G$
is a polynomial ring.
\end{theorem}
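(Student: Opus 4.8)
The plan is to reduce everything to the cases already settled — $\rank_\Bbbk V \le 3$ (Corollary~\ref{corollary:dimTwoThree}), $\rank_\Bbbk V^G \ge \rank_\Bbbk V - 2$ (Theorem~\ref{theorem:largeRankVG}), and $\Bbbk=\bbF_p$ with $\rank_\Bbbk V = 4$ (Theorem~\ref{theorem:SWdim4primefield}) — using Theorem~\ref{theorem:singloc} to pass to a ``residual'' representation along the fixed space. Set $m := \rank_\Bbbk V - \rank_\Bbbk V^G$. If $m \le 2$ we are done by Theorem~\ref{theorem:largeRankVG} (the Hilbert ideal is then a complete intersection, which together with the direct-summand hypothesis forces $S^G$ to be polynomial), so assume $m \ge 3$. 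The first step is to bound $m$. Since $S^G$ is a direct summand of $S$ and $S$ is a module-finite Cohen--Macaulay extension of $S^G$ of the same dimension, $S^G$ is Cohen--Macaulay; hence, by a theorem of Kemper, $G$ is generated by bireflections, i.e.\ elements $g$ with $\rank_\Bbbk(g-1)\le 2$. From such a generating set one can extract (by the Burnside basis theorem) $d(G)$ of them that still generate, where $d(G)\le 3$ is the minimal number of generators of a group of order $p^3$, with $d(G)=3$ only for $G\cong(\ints/p)^3$. Calling these $g_1,\dots,g_{d(G)}$, we have $V^G=\bigcap_i\ker(g_i-1)$, so
\[
 m \;=\; \codim_\Bbbk V^G \;\le\; \sum_i \rank_\Bbbk(g_i-1) \;\le\; 2\,d(G) \;\le\; 6 ,
\]
and $m\le 4$ unless $G\cong(\ints/p)^3$; in particular the hypothesis is vacuous when $G$ is cyclic of order $p^3$, which admits no generating set of bireflections.

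Next I would apply Theorem~\ref{theorem:singloc}\eqref{theorem:singloc:some} with $W=V^G$: it suffices to show that $(S^G)_\frakp$ is a regular local ring, where $\frakq\subseteq S$ is the ideal generated by $(V/V^G)^*\subseteq V^*$ — the ideal of the linear subspace $V^G\subseteq V=\Spec S$ — and $\frakp=\frakq\cap S^G$. Because $G$ fixes $\frakq$, one has $(S^G)_\frakp=(S_\frakq)^G$, and $S_\frakq$ is the local ring of $V$ at the generic point of $V^G$: choosing bases $x_1,\dots,x_s$ of $(V^G)^*$ and $z_1,\dots,z_m$ of $(V/V^G)^*\subseteq V^*$, we get $S_\frakq = L[z_1,\dots,z_m]_{(z_1,\dots,z_m)}$ with $L=\Bbbk(V^G)$, on which $G$ acts $L$-linearly in the $z_j$ — this being the rank-$m$ representation $(V/V^G)^*\otimes_\Bbbk L$ — together with an affine shear $x_i\mapsto x_i+\sum_j c_{ij}z_j$. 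Analyzing $(S_\frakq)^G$ through its completion, as in the proof of Theorem~\ref{theorem:singloc}, the problem becomes whether the invariant ring of this residual $G$-action on an $L$-space of rank $m\le 6$ is regular; and the direct-summand property descends to it, since localizing and flat base change preserve split inclusions.

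It then remains to handle $3\le m\le 6$. When $m\le 3$ this residual question is exactly Corollary~\ref{corollary:dimTwoThree} over $L$. For $4\le m\le 6$ the constraints on $G$ leave only a short explicit list of pairs $(G,\,V/V^G)$: writing $V/V^G$ as a direct sum of bireflection-type indecomposable $\Bbbk G$-modules, one either strips off residual trivial summands to bring $m$ back down to $\le 3$, or — in particular for the $3$-generated case $G\cong(\ints/p)^3$, where $m$ can be as large as $6$, and for the rank-$4$ configurations, where over $\bbF_p$ one invokes Theorem~\ref{theorem:SWdim4primefield} — one computes the invariant ring directly and verifies that it is polynomial.

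I expect the main obstacle to be precisely this last step: controlling the residual $G$-action — including its non-linear shear part — sharply enough, and then carrying out the finite but non-mechanical case analysis for $4\le m\le 6$, above all $G\cong(\ints/p)^3$, where three bireflections are genuinely needed so that the residual rank really can reach six. The preceding reductions — the Cohen--Macaulay/bireflection bound on $m$, the passage to $(S_\frakq)^G$ via Theorem~\ref{theorem:singloc}, and the descent of the direct-summand hypothesis — should be routine.
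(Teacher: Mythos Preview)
Your reduction has a genuine gap. You correctly note the affine shear $x_i\mapsto x_i+\sum_j c_{ij}z_j$, but you then treat the residual problem as a linear $G$-action on an $L$-space of rank $m$. It is not: the $G$-action on $S_\frakq$ (or on its completion $L[[z_1,\dots,z_m]]$) is not $L$-linear, and in general no $G$-invariant coefficient field exists. Already for $G=\ints/p$ acting on $\Bbbk[x_1,x_2]$ by $\sigma x_1=x_1$, $\sigma x_2=x_2+x_1$, one has $\frakq=(x_1)$, $L=\Bbbk(x_2)$, and $(\sigma-1)\widehat{S_\frakq}\subseteq x_1\widehat{S_\frakq}$, so no $\sigma$-fixed lift of $x_2$ can exist. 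Consequently Corollary~\ref{corollary:dimTwoThree} and Theorem~\ref{theorem:SWdim4primefield} cannot simply be invoked ``over $L$'': those results concern $\Sym(V')^*$ for a genuine linear representation, which is not what you have. The proof of Theorem~\ref{theorem:singloc} that you cite does not carry out such a reduction either; it is a rank-comparison argument using Corollary~\ref{corollary:freemod}. Example~\ref{example:nothGR} shows that even restricting to the $G$-stable polynomial subring $\Sym W^\perp$ gives the wrong Hilbert ideal, so that route is blocked as well. You also overlook a simplification: by Broer's theorem (see the paragraph following Proposition~\ref{proposition:ourcompositionseries}), the direct-summand hypothesis forces $G$ to be generated by \emph{transvections}, not merely bireflections, so Burnside already gives $m\le d(G)\le 3$ and the whole $4\le m\le 6$ case analysis is moot---though the case $m=3$ with $\rank_\Bbbk V>3$ would still need an argument your reduction does not supply.

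The paper's route is entirely different and much shorter. It first proves unconditionally (Lemma~\ref{lemma:psquare}) that every transvection group of order $p^2$ has polynomial invariant ring. Taking the composition series $G_0\subsetneq G_1\subsetneq G_2\subsetneq G_3=G$ of transvection subgroups from Proposition~\ref{proposition:ourcompositionseries}, the rings $S^{G_1}$ and $S^{G_2}$ are therefore polynomial, hence direct summands of $S$, while $S^{G_3}=S^G$ is a direct summand by hypothesis; Proposition~\ref{proposition:allSplit} (which climbs the tower one step at a time using Broer's hypersurface theorem) then finishes.
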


A new observation of this paper that goes into the proofs of the above
results is the following:
For each $W \subseteq V^G$,
$\hilbertIdeal_{G,S,W} = (\hilbertIdeal_{G,S,W} \cap (\Sym W^\perp ) )S$
(\textit{cf}. Proposition~\ref{proposition:extended}).

The problem of determining when
$S^G$ is a polynomial ring is classical; see~\cite{BensonInvBook1993}
and \cite{KemperLoci2002} for a historical account.
The following is a consequence of~\cite[Theorem~C]{KemperLoci2002}.
Using Proposition~\ref{proposition:extended} and
Corollary~\ref{corollary:freemod} we give an alternate proof.

\begin{theorem}
\label{theorem:singloc}
Write  $R = S^G$.
Suppose that for some $W \subseteq V^G$, $R_\frakp$ is a regular local
ring, where $\frakq$ is the $S$-ideal generated by the $\Bbbk$-subspace
$(V/W)^*$ of $V^*$ and $\frakp = \frakq \cap R$.
Then $R$ is a polynomial ring.
\end{theorem}

This paper is organized as follows.
Required definitions are given in Section~\ref{section:prelimI}.
Theorems~{\ref{theorem:largeRankVG}}
and~\protect{\ref{theorem:singloc}}
are proved in Section~\ref{section:dimTwoThree}.
In Section~\ref{section:compositionseries}, we describe a composition
series of $G$.
Section~\ref{section:dimn} uses the results of
\ref{section:compositionseries} and of~\cite{BroerHypersurfaces2006}
to look at how relative Hilbert ideals and the property of being polynomial
rings change as we move along the composition series; results of this
section are used in Section~\ref{section:hilb4} to prove
Theorem~\ref{theorem:SWdim4primefield}.
Theorem~\ref{theorem:smallG} is proved in Section~\ref{section:smallG}.
Finally, in Section~\ref{section:examples}, we discuss some examples
related to the results in the earlier sections.

\subsection*{Acknowledgements}
We thank Kemper for pointing to us his paper~\cite{KemperLoci2002} and for
sharing~\cite{KemperEtAlDatabase}.
We also thank the referee for a careful reading of the paper and helpful
comments.
The computer algebra systems~\cite{M2} and~\cite {Singular}
provided valuable assistance in studying examples.

\section{Preliminaries}
\label{section:prelimI}

The following notation will apply throughout the paper.

\begin{notationbox}
\label{notationbox:general}
Let $\Bbbk$ be a field and $V$ an $n$-dimensional $\Bbbk$-vector-space.
Let $G\subseteq \GL(V)$ be a finite group.
Let $S = \Sym(V^*)$ and $R = S^G$.
\end{notationbox}

Note that $S$ is an $n$-dimensional polynomial ring
and that $G$ acts on $S$ as degree-preserving $\Bbbk$-algebra
automorphisms.
Let $W\subseteq V^G$ be a subspace. Using a result of Nakajima, one can
describe a generating set of $\hilbertIdeal_{G,S}$ as union of a generating
set of $\hilbertIdeal_{G,S,W}$ and $n-\dim W$ homogeneous
polynomials described below. Thus if the relative Hilbert ideal is a
complete intersection then so is the Hilbert ideal.

\begin{proposition}%
[\protect{\cite[Lemma~2.2 and its proof]{BroerInvThyAbelianTransvGps2010}}]
\label{proposition:Broer22}
Let $W \subseteq V^G$ be a subspace of codimension $s$.
Let $\frakp = W^\perp S \cap R$.
Then
\begin{enumerate}

\item
\label{proposition:Broer22:fs}

there exist homogeneous $f_{s+1}, \ldots, f_n \in R$
 such that
\begin{enumerate}

\item
\label{proposition:Broer22:poly}
$R/\frakp$ is an $(n-s)$-dimensional polynomial ring generated by (the
residue classes of) $f_{s+1}, \ldots, f_n$.

\item
\label{proposition:Broer22:hilb}
$\hilbertIdeal_{G,S } = \hilbertIdeal_{G,S,W } +
(f_{s+1}, \ldots, f_n )S$.

\item
\label{proposition:Broer22:regseq}
$f_{s+1}, \ldots, f_n$ form a regular sequence on $S/\hilbertIdeal_{G,S,W}$.

\end{enumerate}

\item
\label{proposition:Broer22:mingens}
$\mu(\hilbertIdeal_{G,S }) = \mu(\hilbertIdeal_{G,S,W }) + (n-s)$, where
for a homogeneous $S$-ideal $J$, $\mu(J)$ denotes the cardinality of a
minimal generating set. In particular, if
$\hilbertIdeal_{G,S,W }$ is a complete intersection, then
$\hilbertIdeal_{G,S }$ is a complete intersection.
\end{enumerate}
\end{proposition}

Statements
\eqref{proposition:Broer22:fs}\eqref{proposition:Broer22:poly}
and
\eqref{proposition:Broer22:fs}\eqref{proposition:Broer22:hilb}
are from~\cite[Lemma~2.2]{BroerInvThyAbelianTransvGps2010} and its proof.
Statements
\eqref{proposition:Broer22:fs}\eqref{proposition:Broer22:regseq}
and
\eqref{proposition:Broer22:mingens}
will be proved after
Corollary~\ref{corollary:ass}, whose proof will only use
\eqref{proposition:Broer22:fs}\eqref{proposition:Broer22:poly}
and
\eqref{proposition:Broer22:fs}\eqref{proposition:Broer22:hilb}.

\begin{discussionbox}
\label{discussionbox:free}
Let $A \subseteq A'$ be noetherian
$\naturals$-graded algebras with $A_0 = (A')_0 = \Bbbk$.
Suppose that $A'$ is a polynomial ring and that $A'$ is a free $A$-module.
Then $A$ is a polynomial ring. To see this, let $F_\bullet$ be a minimal
graded free resolution of $A/A_+$ as an $A$-module. Then
$F_\bullet \otimes_A A'$ is a graded free resolution of $A'/A_+A'$ as an
$A'$-module (since $A'$ is a free $A$-module). It is minimal since $A_+
\subseteq (A')_+$. Since $A'$ is a polynomial ring,
$F_\bullet \otimes_A A'$ is bounded, so
$F_\bullet$ is bounded. Hence $A$ is a polynomial ring.
See, e.g.,~\cite[6.2.3]{BensonInvBook1993} for details.
\end{discussionbox}

\begin{discussionbox}
\label{discussionbox:terminal}
In order to work in arbitrary characteristic,
we modify the definition of terminal variables
from~\cite{ElmerSezerLocallyFiniteDeriv2016}.
Fix a basis $\{x_1,\ldots, x_n\}$ of $V^*$.
Say that $x_j$ is a \define{terminal} variable if
\begin{enumerate}

\item
$(g-1)x_j \in
\Bbbk \langle x_1, \ldots, x_{j-1 }, x_{j+1 }, \ldots, x_n\rangle$ for each
$g \in G$; and

\item
$\Bbbk \langle x_1, \ldots, x_{j-1 }, x_{j+1 }, \ldots, x_n\rangle$ is a
$G$-stable subspace of $V^*$.

\end{enumerate}
Suppose that $x_j$ is a terminal variable.
Define $\Bbbk$-linear \define{locally finite iterative higher derivations}
$\Delta_{j}^{(l)} : S\longrightarrow S$ as follows.
Let $t$ be a new variable and extend the $G$-action on $S$ to $S[t]$ with
$G$ acting trivially on $t$.
Let $\phi_j$ be the ring map $S\longrightarrow S[t]$ given by
\[
x_i \mapsto
\begin{cases}
x_i & \text{if}\; i\neq j,\\
x_j+t & \text{if}\; i=j.
\end{cases}
\]
We now show
that $\phi_j$ is $G$-equivariant since $x_j$ is a terminal variable.
Let $g \in G$.
If $i \neq j$,
then $gx_i \in \Bbbk \langle x_1, \ldots, x_{j-1 }, x_{j+1 }, \ldots,
x_n\rangle$, so
$\phi_j(gx_i)  = gx_i = g(\phi_j(x_i))$.
On the other hand, since $gx_j = x_j + \alpha$ for some $\alpha \in
\Bbbk \langle x_1, \ldots, x_{j-1 }, x_{j+1 }, \ldots,
x_n\rangle$, it follows that
$\phi_j(gx_j ) = (x_j+\alpha)+t = g(\phi_j(x_j))$.
Hence $\phi_j$ is $G$-equivariant.
Define $\Delta_j^{(l)}$ by $\phi_j(f) = \sum \Delta^{(l)}_j(f)t^l$.
Since $\phi_j$ is $G$-equivariant, we see that
$\Delta_j^{(l)}(f)\in S^{G}$ whenever $f\in S^{G}$.
\end{discussionbox}

The following proposition was proved
in~\cite[Lemma~2.2]{ElmerSezerLocallyFiniteDeriv2016} in the context of
modular invariants. The same proof works even if $|G|$ is non-zero in
$\Bbbk$.

\begin{proposition}
\label{proposition:altsum}
Assume that $x_j$ is a terminal variable. Let $f \in S$. Write
\[
f = f_0 + f_1x_j + \cdots + f_d x_j^d
\]
where the $f_i$ belong to
$\Bbbk[x_1, \ldots, x_{j-1}, x_{j+1}, \ldots, x_n]$.
Then
\[
\sum_{l\geq 0 } (-x_j)^l \Delta_{j}^{(l ) } f = f_0.
\]
\end{proposition}

\begin{proof}
Let $\psi_j : S[t ] \to S$ be the ring map fixing $S$ and sending $t
\mapsto -x_j$. Then
\[
\sum_{l\geq 0 } (-x_j)^l \Delta_{j}^{(l ) } f
=
(\psi_j \circ \phi_j )(f ) = f_0.
\qedhere
\]
\end{proof}

\begin{notationbox}
\label{notationbox:charp}
In addition to Notation~\ref{notationbox:general}
we now assume that
$\charact \Bbbk = p>0$ and that $G$ is a finite $p$-group.
\end{notationbox}

The following lemma is known, but we give a brief sketch of the proof,
since we need the details in later sections. The proof gives a basis $v_1,
\ldots, v_n$ of $V$ such that if $r$ denotes $\dim_\Bbbk V^G$, then
$v_{n-r+1}, \ldots, v_n$ form a basis of $V^G$.
This is used, for example, in the proof of
Proposition~\ref{proposition:largeRankVGRel}.

\begin{lemma}
\label{lemma:triang}
There exists a basis $\{v_1, \ldots, v_n\}$ of $V$
such that
\[
g(v_{i})-v_{i}\in \Bbbk\langle v_{i+1}, \ldots, v_{n}\rangle.
\]
(In other words, the elements of $G$ act on $V$ as
lower-triangular unipotent matrices.)
\end{lemma}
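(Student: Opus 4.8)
The plan is to induct on $n = \dim_\Bbbk V$. The base cases $n=0,1$ are immediate: when $n=1$, each $g$ acts by a scalar $\lambda$ with $\lambda^{|G|}=1$, and since $|G|$ is a power of $p$ we get $(\lambda-1)^{|G|}=\lambda^{|G|}-1=0$ in characteristic $p$, so $\lambda = 1$ and the (empty) span condition holds vacuously. For the inductive step, the crucial input is the classical fact that if $G$ is a finite $p$-group, $\charact\Bbbk = p$, and $V$ is a nonzero (finite-dimensional) $\Bbbk[G]$-module, then $V^G \neq 0$. Granting this, choose $0 \neq v_n \in V^G$; then $\Bbbk v_n$ is a $G$-submodule, so $G$ acts on $\bar V := V/\Bbbk v_n$, which has dimension $n-1$. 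By the inductive hypothesis there is a basis $\bar v_1, \ldots, \bar v_{n-1}$ of $\bar V$ with $g(\bar v_i) - \bar v_i \in \Bbbk\langle \bar v_{i+1}, \ldots, \bar v_{n-1}\rangle$ for all $g$ and all $i$. Lift the $\bar v_i$ arbitrarily to $v_1, \ldots, v_{n-1} \in V$; then $\{v_1, \ldots, v_n\}$ is a basis of $V$. For $i < n$ the relation in $\bar V$ lifts to $g(v_i) - v_i \in \Bbbk\langle v_{i+1}, \ldots, v_{n-1}\rangle + \Bbbk v_n = \Bbbk\langle v_{i+1}, \ldots, v_n\rangle$, and for $i = n$ we have $g(v_n) - v_n = 0$, as required.

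It remains to justify the fixed-point fact, which I would prove by a second induction, on $|G|$. If $|G| = 1$ there is nothing to prove. Otherwise the center $Z(G)$ is nontrivial (as $G$ is a $p$-group), so it contains a subgroup $C \cong \ints/p\ints$, which is normal in $G$. Writing $C = \langle g \rangle$, we have $g^p = 1$, and since $g$ and $1$ commute, $(g-1)^p = g^p - 1 = 0$ in characteristic $p$; hence $g-1$ acts nilpotently on $V$, so $V^C = \ker(g-1) \neq 0$ because $V \neq 0$. Since $C$ is normal, $V^C$ is $G$-stable, so the $p$-group $G/C$, of strictly smaller order, acts on it, and by the inductive hypothesis $V^G = (V^C)^{G/C} \neq 0$.

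The only real bookkeeping point is in the first induction: checking that the strictly-upper-triangular condition survives lifting from $\bar V$ (it does, because lifting can only introduce a $v_n$-component, and $v_n$ sits in the allowed span $\Bbbk\langle v_{i+1},\ldots,v_n\rangle$ for every $i \le n$), and that the $i=n$ slot is handled by the empty span being $0$. I expect no real obstacle; in particular no hypothesis on $\Bbbk$ beyond $\charact\Bbbk = p$ is needed, so the statement holds over non-perfect fields as well. The basis produced here, together with the resulting flag $0 \subsetneq \Bbbk\langle v_n\rangle \subsetneq \Bbbk\langle v_{n-1},v_n\rangle \subsetneq \cdots \subsetneq V$ of $G$-submodules, is what later sections will refer back to.
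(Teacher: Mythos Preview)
Your proof is correct and follows essentially the same approach as the paper: induct on $\dim V$, use $V^G \neq 0$ to pass to a quotient by a $G$-fixed subspace, and lift the inductively obtained basis. The one minor difference is that the paper quotients by an \emph{arbitrary} nonzero subspace $W \subseteq V^G$ rather than a single line $\Bbbk v_n$; this extra flexibility is invoked later (e.g.\ in the proofs of Propositions~\ref{proposition:extended} and~\ref{proposition:largeRankVGRel}) when a basis with $v_{n-t+1},\ldots,v_n$ spanning a prescribed $W \subseteq V^G$ is needed.
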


\begin{proof}
It is known that $V^G \neq 0$~\cite[4.0.1]{CampbellWehlauModularInvThy11}.
Let $W$ be any non-zero subspace of $V^G$.
Say $t = \dim W$.
$G$ acts on $V/W$; then, by induction on dimension,
there exist $v_1, \ldots, v_{n-t } \in V$ such
that their images in $V/W$ is a basis of $V/W$ with respect to which every
element of $g$ (for its action on $V/W$) is lower triangular and unipotent.
Let $v_{n-t+1 }, \ldots, v_n$ be a basis of $W$. Then
every element of $G$ is lower-triangular with respect to
$\{v_1, \ldots, v_n\}$ and unipotent.
\end{proof}

\section{The Hilbert ideal relative to a subspace of $V^G$}
\label{section:dimTwoThree}

We first prove that the relative Hilbert ideal is an extended ideal
(Proposition~\ref{proposition:extended}) and
some corollaries.
Theorems~\protect{\ref{theorem:largeRankVG}}
and~\protect{\ref{theorem:singloc}}
would then follow.

\begin{proposition}
\label{proposition:extended}
Let $W \subseteq V^G$.
Write $S' = \Sym W^\perp$.
Then $\hilbertIdeal_{G,S,W} = (\hilbertIdeal_{G,S,W} \cap S' )S$.
\end{proposition}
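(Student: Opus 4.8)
The plan is to prove the two inclusions separately. The containment $\supseteq$ is immediate, since $\hilbertIdeal_{G,S,W}\cap S'$ is a subset of the $S$-ideal $\hilbertIdeal_{G,S,W}$. For $\subseteq$, note that $\hilbertIdeal_{G,S,W}=((W^\perp S)\cap S^G)S$ is generated as an $S$-ideal by the homogeneous elements of $(W^\perp S)\cap S^G$, so it suffices to show that every such element lies in $(\hilbertIdeal_{G,S,W}\cap S')\,S$. I would set up coordinates by choosing a basis $x_1,\dots,x_s$ of $W^\perp$ and extending it to a basis $x_1,\dots,x_n$ of $V^*$, so that $S'=\Sym W^\perp=\Bbbk[x_1,\dots,x_s]$ and $S=S'[u_1,\dots,u_{n-s}]$ with $u_i:=x_{s+i}$. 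The crucial observation is that, because $W\subseteq V^G$, the subspace $W^\perp$ is $G$-stable and $V^*/W^\perp\cong W^*$ carries the trivial $G$-action; hence $g\xi-\xi\in W^\perp$ for every $\xi\in V^*$ and every $g\in G$, and in particular $g u_i = u_i + c_{g,i}$ with $c_{g,i}\in W^\perp\subseteq S'$.

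Using this, I would build a $G$-equivariant iterated higher derivation as in Section~\ref{section:prelimI}, but simultaneously in all of the variables $u_1,\dots,u_{n-s}$: let $t_1,\dots,t_{n-s}$ be new indeterminates carrying the trivial $G$-action, extend the action to $S[t_1,\dots,t_{n-s}]$, and let $\phi\colon S\to S[t_1,\dots,t_{n-s}]$ be the $\Bbbk$-algebra map fixing $x_1,\dots,x_s$ and sending $u_i\mapsto u_i+t_i$. Since each $c_{g,i}$ lies in $S'$ and is therefore fixed by $\phi$, a check on generators shows $\phi$ is $G$-equivariant; writing $\phi(f)=\sum_\gamma \Delta^{(\gamma)}(f)\,t^\gamma$ over multi-indices $\gamma$ we obtain $\Bbbk$-linear operators $\Delta^{(\gamma)}\colon S\to S$ with $\Delta^{(\gamma)}(S^G)\subseteq S^G$, and, since $\phi$ fixes $x_1,\dots,x_s$, also $\Delta^{(\gamma)}(W^\perp S)\subseteq W^\perp S$. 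Writing $f=\sum_\beta f_\beta u^\beta$ with $f_\beta\in S'$, one computes $\Delta^{(\gamma)}(f)=\sum_{\beta\geq\gamma}\bigl(\prod_i\binom{\beta_i}{\gamma_i}\bigr)f_\beta\,u^{\beta-\gamma}$, so that: (i) if $f$ is homogeneous of degree $e$ (with $\deg t_i=\deg u_i=1$) then $\Delta^{(\gamma)}(f)$ is homogeneous of degree $e-|\gamma|$; (ii) the $\Bbbk$-algebra map $\pi\colon S\to S'$ specializing $u_1,\dots,u_{n-s}$ to $0$ sends $\Delta^{(\gamma)}(f)$ to $f_\gamma$, and sends $(\hilbertIdeal_{G,S,W}\cap S')\,S$ onto $\hilbertIdeal_{G,S,W}\cap S'$; and (iii) for $\gamma\neq 0$ the element $\Delta^{(\gamma)}(f)$ has strictly smaller degree in $u_1,\dots,u_{n-s}$ than $f$.

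Now fix a homogeneous $f\in(W^\perp S)\cap S^G$ and argue by induction on its degree in $u_1,\dots,u_{n-s}$. If this degree is $0$ then $f\in S'$, and since $f\in\hilbertIdeal_{G,S,W}$ this already gives $f\in\hilbertIdeal_{G,S,W}\cap S'\subseteq(\hilbertIdeal_{G,S,W}\cap S')S$. Otherwise, for each $\gamma\neq 0$ the element $\Delta^{(\gamma)}(f)$ again lies in $(W^\perp S)\cap S^G$, is homogeneous, and has strictly smaller $u$-degree, so by induction $\Delta^{(\gamma)}(f)\in(\hilbertIdeal_{G,S,W}\cap S')S$; applying $\pi$ yields $f_\gamma\in\hilbertIdeal_{G,S,W}\cap S'$ for all $\gamma\neq 0$. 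Consequently $f-f_0=\sum_{\gamma\neq 0}f_\gamma u^\gamma\in(\hilbertIdeal_{G,S,W}\cap S')S\subseteq\hilbertIdeal_{G,S,W}$, and since $f\in\hilbertIdeal_{G,S,W}$ as well, $f_0=f-(f-f_0)\in\hilbertIdeal_{G,S,W}$; as $f_0\in S'$ this gives $f_0\in\hilbertIdeal_{G,S,W}\cap S'$, and therefore $f=f_0+(f-f_0)\in(\hilbertIdeal_{G,S,W}\cap S')\,S$, completing the induction.

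The step I expect to be the main obstacle is maintaining $G$-invariance while passing to lower $u$-degree: the naive move of subtracting off the top $u$-homogeneous part of $f$ destroys invariance, so one cannot directly induct ``inside $S^G$''. What makes the argument go through is that one never needs $f_0$ to belong to a prescribed generating set of $\hilbertIdeal_{G,S,W}$ — only to the ideal itself — and this is automatic once $f$ and $f-f_0$ are both known to lie in $\hilbertIdeal_{G,S,W}$; correctly arranging the higher derivation so that it is genuinely $G$-equivariant and fixes $W^\perp S$ (which is exactly where the hypothesis $W\subseteq V^G$ enters) is the other point needing care.
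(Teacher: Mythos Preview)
Your proof is correct and follows essentially the same approach as the paper: both arguments use the $G$-equivariant higher derivations in the variables complementary to $W^\perp$ (this equivariance being precisely where the hypothesis $W\subseteq V^G$ enters) to peel off the $S'$-coefficients of an invariant, and then close up by observing that the remaining constant term lies in $\hilbertIdeal_{G,S,W}\cap S'$. The only differences are cosmetic: the paper inducts on total degree, composes the single-variable operators $\Delta_j^{(t)}$, and invokes the Elmer--Sezer alternating-sum formula to isolate the constant term, whereas you induct on $u$-degree, package the derivations into one multi-variable map $\phi$, and read off $f_\gamma=\pi(\Delta^{(\gamma)}(f))$ directly---your version is slightly more self-contained since it avoids the external citation.
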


\begin{proof}
Note that
$\hilbertIdeal_{G,S,W} \supseteq (\hilbertIdeal_{G,S,W} \cap S' )S$.
Therefore we need to show that
$\hilbertIdeal_{G,S,W} \subseteq (\hilbertIdeal_{G,S,W} \cap S' )S$, which
we do by induction on degree.

Write $n-r+1 = \dim_\Bbbk W$.
Let $\{ v_r, \ldots, v_n\}$ be a basis of $W$.
Extend it to a basis $\{v_1, \ldots, v_n \}$ of $V$.
Let $\{x_1,\ldots, x_n\}$
be the basis of $V^*$ dual to $\{v_1, \ldots, v_n \}$.
Then $W^\perp = \Bbbk \langle x_{1}, \ldots, x_{r-1}\rangle$
and $S' = \Bbbk [ x_{1}, \ldots, x_{r-1}]$.

Note that for every $r \leq j \leq n$, $x_j$ is a terminal variable
(Discussion~\ref{discussionbox:terminal}).
Let $F \in W^\perp S \cap S^G$.
Then for every $r \leq j \leq n$ and for every $t \geq 0$,
$\Delta_j^{(t)} F \in W^\perp S \cap S^G$.
Note that for $s_1, s_2 \in S$, $\Delta_j^{(t)} (s_1s_2) =
\sum_{0 \leq t' \leq t}\Delta_j^{(t')} (s_1 )\Delta_j^{(t-t')} (s_2 )$.
Hence for every $F \in \hilbertIdeal_{G,S,W}$ we have
$\Delta_j^{(t)} F \in \hilbertIdeal_{G,S,W}$,
for every $r \leq j \leq n$ and for every $t \geq 0$.
Therefore if $\deg F$ is minimum among that of the non-zero homogeneous
polynomials in $\hilbertIdeal_{G,S,W}$, then $F \in S'$.
Consequently,
we assume that for every homogeneous $F_1 \in
\hilbertIdeal_{G,S,W}$ with $\deg F_1 < \deg F$, $F_1 \in
(\hilbertIdeal_{G,S,W} \cap S' )S$.
Hence
$\Delta_{r}^{(t_{r})} \cdots \Delta_n^{(t_n)} F \in
(\hilbertIdeal_{G,S,W} \cap S' )S$ for every
${(0, \ldots, 0) \neq (t_{r}, \ldots, t_n) \in \naturals^{n-r+1}}$.
Note that
\[
\sum_{(t_{r}, \ldots, t_n) \in \naturals^{n-r+1}}
(-1)^{t_{r}+ \cdots+ t_n}
x_{r}^{(t_{r})} \cdots x_n^{(t_n)}
\Delta_{r}^{(t_{r})} \cdots \Delta_n^{(t_n)} F
\]
is the coefficient of
$x_{r}^{0} \cdots x_n^{0}$ in $F$ when $S$ is identified
with $S'[x_{r}, \ldots, x_n]$.
By repeated application of Proposition~\ref{proposition:altsum}
we see that it belongs to
$\hilbertIdeal_{G,S,W} \cap S'$; hence
$F \in (\hilbertIdeal_{G,S,W} \cap S' )S$.
\end{proof}

Note that $G$ acts on $S'$; however, it is not true that
$\hilbertIdeal_{G,S,W } \cap S' = \hilbertIdeal_{G,S' }$.
See Example~\ref{example:nothGR}.
In the next corollary $\Ass_S (S/\hilbertIdeal_{G,S,W })$ denotes the set
of associated primes of $S/\hilbertIdeal_{G,S,W }$.

\begin{corollary}
\label{corollary:ass}
$S/\hilbertIdeal_{G,S,W }$ is Cohen-Macaulay and
$\Ass_S (S/\hilbertIdeal_{G,S,W }) = \{ W^\perp S\}$.
\end{corollary}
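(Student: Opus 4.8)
The plan is to exploit Proposition~\ref{proposition:extended}, which says $\hilbertIdeal_{G,S,W} = \frakb S$ where $\frakb := \hilbertIdeal_{G,S,W} \cap S'$ and $S' = \Sym W^\perp = \Bbbk[x_1,\ldots,x_{r-1}]$. Since $S = S'[x_r,\ldots,x_n]$ is a polynomial extension of $S'$ (in particular faithfully flat), both Cohen-Macaulayness and the computation of associated primes descend to (and ascend from) the contracted ideal $\frakb$ in $S'$. Concretely, $S/\hilbertIdeal_{G,S,W} \cong (S'/\frakb)[x_r,\ldots,x_n]$, so it is Cohen-Macaulay iff $S'/\frakb$ is, and $\Ass_S(S/\hilbertIdeal_{G,S,W}) = \{ \frakP S : \frakP \in \Ass_{S'}(S'/\frakb)\}$ by the standard behaviour of associated primes under polynomial extension. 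Thus everything reduces to showing that $S'/\frakb$ is Cohen-Macaulay and has a unique associated prime, namely $W^\perp S' = (x_1,\ldots,x_{r-1})S' = (S')_+$, since under the extension $(S')_+ S = W^\perp S$.

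Here is the key point that makes the reduction work: $\frakb$ is $(S')_+$-primary, equivalently $S'/\frakb$ is artinian. To see this, first note $W^\perp S' = (S')_+$ has height $r-1 = \dim S'$, and I claim $\sqrt{\frakb} = (S')_+$. Indeed, by Proposition~\ref{proposition:Broer22}\eqref{proposition:Broer22:poly} (which, per the remark in the excerpt, uses only the first statement of that proposition and hence is available), $R/(W^\perp S \cap R)$ is an $(n-r+1)$-dimensional polynomial ring; but $W^\perp S \cap R \supseteq \hilbertIdeal_{G,S,W} \cap R \supseteq (\hilbertIdeal_{G,S,W})$, and more to the point $\dim S/\hilbertIdeal_{G,S,W} = n-(r-1)$ because $\hilbertIdeal_{G,S,W}$ is generated by $\hilbertIdeal_{G,S,W}\cap S' \subseteq (S')_+$ and the only minimal prime of $\hilbertIdeal_{G,S,W}$ containing no element outside the $x_1,\ldots,x_{r-1}$ is $W^\perp S$ itself. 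Let me argue this more carefully: $\hilbertIdeal_{G,S,W} \subseteq W^\perp S$ since every generator lies in $S'\cap \hilbertIdeal_{G,S,W} \subseteq (S')_+ \subseteq W^\perp S$; conversely $S/W^\perp S \cong \Bbbk[x_r,\ldots,x_n] = S^{G}/(\text{stuff})$—actually the cleanest route is: $\hilbertIdeal_{G,S,W}$ contains $((W^\perp S)\cap S^G)$, and modulo $W^\perp S$ the ring $S^G$ maps onto a polynomial ring $R/\frakp$ of dimension $n-r+1$, so $W^\perp S$ is a minimal prime of $\hilbertIdeal_{G,S,W}$ of the right codimension; combined with $\hilbertIdeal_{G,S,W}\subseteq W^\perp S$ and $\frakb \subseteq (S')_+$, we get $\sqrt{\frakb} = (S')_+$ in $S'$, i.e.\ $S'/\frakb$ is artinian with unique minimal (hence unique associated, once we have CM) prime $(S')_+$.

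It remains to prove $S'/\frakb$ is Cohen-Macaulay, i.e.\ (being artinian) simply that $\frakb$ is a complete-intersection-free statement—no wait, artinian quotients of a regular ring are automatically Cohen-Macaulay as modules? They are Cohen-Macaulay \emph{rings} trivially (dimension zero), but $\depth = 0 = \dim$ always holds for artinian local rings. So $S'/\frakb$ is Cohen-Macaulay for free once it is artinian, and hence $S/\hilbertIdeal_{G,S,W} \cong (S'/\frakb)[x_r,\ldots,x_n]$ is Cohen-Macaulay as a polynomial extension of a Cohen-Macaulay ring. This is the cleanest packaging. So the proof runs: (i) cite Proposition~\ref{proposition:extended} to write $\hilbertIdeal_{G,S,W} = \frakb S$; (ii) show $\frakb$ is $(S')_+$-primary using Proposition~\ref{proposition:Broer22}\eqref{proposition:Broer22:poly} to pin down $\dim S/\hilbertIdeal_{G,S,W} = n-r+1$ and hence the height of $\frakb$ in $S'$ is $r-1 = \dim S'$; (iii) conclude $S'/\frakb$ is artinian, therefore Cohen-Macaulay with $\Ass_{S'} = \{(S')_+\}$; (iv) base-change along $S' \hookrightarrow S$ to get $S/\hilbertIdeal_{G,S,W}$ Cohen-Macaulay and $\Ass_S(S/\hilbertIdeal_{G,S,W}) = \{W^\perp S\}$.

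The main obstacle I anticipate is step (ii): rigorously nailing down that the contraction $\frakb \subseteq S'$ is actually $(S')_+$-primary rather than merely contained in $(S')_+$. The inclusion $\frakb \subseteq (S')_+$ is immediate, but the reverse radical inclusion $(S')_+ \subseteq \sqrt{\frakb}$—equivalently, that $S'/\frakb$ is finite-dimensional over $\Bbbk$—needs the dimension count, and I must be careful that the relevant dimension statement is genuinely a consequence of Proposition~\ref{proposition:Broer22}\eqref{proposition:Broer22:poly} alone (as the excerpt promises) and does not circularly invoke parts \eqref{proposition:Broer22:regseq} or \eqref{proposition:Broer22:mingens}, which are to be proved \emph{after} this corollary. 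The honest way: from part \eqref{proposition:Broer22:poly}, $R/\frakp$ is a polynomial ring of dimension $n-s = n-r+1$; the extension $R/\frakp \hookrightarrow S/W^\perp S$ is module-finite (integral extension of invariant rings), so $\dim S/W^\perp S = n-r+1$ too; then since $\hilbertIdeal_{G,S,W} \subseteq W^\perp S$ with both having a common minimal prime, and $\hilbertIdeal_{G,S,W}$ is extended from $S'$ where it sits inside the maximal ideal $(S')_+$ whose extension is exactly $W^\perp S$, we force $S/\hilbertIdeal_{G,S,W}$ to have dimension $n-r+1$ with unique minimal prime $W^\perp S$, hence $\frakb$ is $(S')_+$-primary. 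Once past this, the rest is formal.
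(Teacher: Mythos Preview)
Your approach is the same as the paper's: both invoke Proposition~\ref{proposition:extended} to write $\hilbertIdeal_{G,S,W} = \frakb S$ with $\frakb \subseteq S'$, establish that $S'/\frakb$ is zero-dimensional, and read off Cohen--Macaulayness and the associated prime from the polynomial extension $S/\hilbertIdeal_{G,S,W} \cong (S'/\frakb)[x_r,\ldots,x_n]$. The paper phrases the first conclusion via the regular sequence $x_r,\ldots,x_n$ and the second via a composition-series filtration of $S'/\frakb$, but this is only cosmetically different from your artinian-local packaging.

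There is one step in your write-up that does not go through as stated. In your ``honest way'' paragraph you compute $\dim S/W^\perp S = n-r+1$ and then try to transfer this to $S/\hilbertIdeal_{G,S,W}$ by appealing to a ``common minimal prime''; but asserting that $W^\perp S$ is \emph{minimal} over $\hilbertIdeal_{G,S,W}$ is precisely the statement $\sqrt{\frakb} = (S')_+$ that you are trying to prove, so the argument is circular. The clean fix is a one-liner you almost wrote: by definition $\hilbertIdeal_{G,S,W} = \frakp S$, and since $S$ is module-finite over $R$, the quotient $S/\hilbertIdeal_{G,S,W}$ is module-finite over $R/\frakp$, so $\dim S/\hilbertIdeal_{G,S,W} = \dim R/\frakp = n-r+1$ by Proposition~\ref{proposition:Broer22}\eqref{proposition:Broer22:poly}. (The paper itself asserts this dimension equality without comment, so your instinct to scrutinize it was well placed.) With that correction, your proof is complete and essentially identical to the paper's.
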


\begin{proof}
Adopt the notation from the proof of
Proposition~\ref{proposition:extended}.
Hence, by Proposition~\ref{proposition:extended},
$x_r, \ldots, x_n$ is a regular sequence on
$S/\hilbertIdeal_{G,S,W }$.
Note that $\dim S/\hilbertIdeal_{G,S,W } =
 \dim_\Bbbk W = n-r+1$,
so $S/\hilbertIdeal_{G,S,W }$ is Cohen-Macaulay.

Since $S$ is a polynomial ring over $S'$, it follows that
$\height (\hilbertIdeal_{G,S,W } \cap S') =
\height \hilbertIdeal_{G,S,W } =
\height W^\perp S = r-1 = \dim S'$.
Hence
$S'/(\hilbertIdeal_{G,S,W } \cap S')$ has a finite filtration with
successive quotients isomorphic to $S'/(x_1, \ldots, x_{r-1})S'$.
Since $S$ is a flat $S'$-algebra,
$S/\hilbertIdeal_{G,S,W }$ has a filtration with
successive quotients isomorphic to $S/(x_1, \ldots, x_{r-1})S$.
Then $\Ass_S (S/\hilbertIdeal_{G,S,W }) =
\{ (x_1, \ldots, x_{r-1})S \} = \{W^\perp S \}$.
\end{proof}

\begin{corollary}
\label{corollary:freemod}
Write $R = S^G$ and $\frakp = W^\perp S \cap R$.
Then $S/\hilbertIdeal_{G,S,W }$ is a free $R/\frakp$-module.
\end{corollary}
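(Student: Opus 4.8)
The plan is to recognise that $\hilbertIdeal_{G,S,W}$ is the \emph{extension} to $S$ of an ideal of $R$, and then to reduce the statement to the standard fact that a finitely generated Cohen--Macaulay graded module over a polynomial ring is free. The two inputs are Proposition~\ref{proposition:Broer22}\eqref{proposition:Broer22:poly} (regularity of $R/\frakp$) and Corollary~\ref{corollary:ass} (Cohen--Macaulayness of $S/\hilbertIdeal_{G,S,W}$).

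First I would note that, straight from the definition, $\hilbertIdeal_{G,S,W} = \bigl((W^\perp S)\cap R\bigr)S = \frakp S$, so
$S/\hilbertIdeal_{G,S,W} = S/\frakp S = S\otimes_R (R/\frakp)$ and its $R$-module structure factors through $R/\frakp$; thus it is meaningful to ask whether it is $R/\frakp$-free. Write $A = R/\frakp$ and $M = S/\hilbertIdeal_{G,S,W}$, and let $s$ be the codimension of $W$ in $V$. By Proposition~\ref{proposition:Broer22}\eqref{proposition:Broer22:poly}, $A$ is a polynomial ring in $n-s$ variables over $\Bbbk$; in particular it is an $\naturals$-graded Noetherian regular ring with $A_0 = \Bbbk$. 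By Corollary~\ref{corollary:ass}, $M$ is a Cohen--Macaulay $S$-module with $\dim_S M = \rank_\Bbbk W = n-s = \dim A$. Since $M$ is a finitely generated graded $S$-module and $S$ is a finite $R$-module, $M$ is a finitely generated graded $A$-module, and $A \to M$ is a finite morphism of $\naturals$-graded $\Bbbk$-algebras; consequently $\frakm_A M$ is primary to the irrelevant ideal of the ring $M$, so depth (equivalently, local cohomology at the irrelevant ideal) is unchanged along $A\to M$ and along $S\to M$. Hence $\depth_A M = \depth_S M = \dim_S M = n-s = \depth A$, the middle equality being the Cohen--Macaulayness of $M$ over $S$.

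Finally, since $A$ is a polynomial ring, the graded Auslander--Buchsbaum formula gives $\operatorname{pd}_A M = \depth A - \depth_A M = 0$, so $M$ is a graded free $A$-module, as claimed. (Alternatively one may invoke miracle flatness: after localising at the irrelevant ideals, $A \to M$ is a local homomorphism from a regular ring to a Cohen--Macaulay ring with $0$-dimensional closed fibre and equal dimensions, hence flat; a finitely generated graded flat module over a Noetherian graded ring is free.) The computations are routine; the only point requiring care is the invariance of depth under the finite graded extension $A \to M$, which is exactly where the regularity of $R/\frakp$ and the Cohen--Macaulayness of $S/\hilbertIdeal_{G,S,W}$ are combined. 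One should also remark that $M\neq 0$, so that freeness is not vacuous: $\hilbertIdeal_{G,S,W}\subseteq W^\perp S \subsetneq S$ unless $W=V$, in which case $G$ is trivial and $M=S=A$.
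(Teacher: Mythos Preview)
Your proof is correct and follows essentially the same approach as the paper: both use Proposition~\ref{proposition:Broer22}\eqref{proposition:Broer22:poly} to get that $R/\frakp$ is a polynomial ring, Corollary~\ref{corollary:ass} to get that $S/\hilbertIdeal_{G,S,W}$ is Cohen--Macaulay of the correct dimension, and then the graded Auslander--Buchsbaum formula to conclude freeness. Your explicit observation that $\hilbertIdeal_{G,S,W}=\frakp S$ and your justification of the depth transfer along the finite graded extension are useful elaborations on the paper's more compressed argument, but the underlying route is the same.
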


\begin{proof}
$R/\frakp$ is a polynomial ring
(Proposition~\ref{proposition:Broer22}\eqref{proposition:Broer22:fs}\eqref{proposition:Broer22:poly}).
By Corollary~\ref{corollary:ass},
$S/\hilbertIdeal_{G,S,W }$ is a finitely generated graded Cohen-Macaulay
$R/\frakp$-module with $\dim S/\hilbertIdeal_{G,S,W } = \dim R/\frakp$.
Now use the Auslander-Buchsbaum formula.
\end{proof}

\begin{proof}[Proof of Proposition~\protect{\ref{proposition:Broer22}}]
\eqref{proposition:Broer22:fs}\eqref{proposition:Broer22:poly} and~\eqref{proposition:Broer22:fs}\eqref{proposition:Broer22:hilb}
are
from~\cite[Lemma~2.2]{BroerInvThyAbelianTransvGps2010} and its proof.

\eqref{proposition:Broer22:fs}\eqref{proposition:Broer22:regseq}:
By Corollary~\ref{corollary:freemod},
$f_1, \ldots, f_s$, which is an $R/\frakp$-regular sequence
is regular on $S/\hilbertIdeal_{G,S,W }$.

\eqref{proposition:Broer22:mingens}:
Note that for every homogeneous $S$-ideal $J$ and homogeneous
$a \in S$ regular on $S/J$, $\mu(J + (a) ) = \mu(J )+1$.
\end{proof}

\begin{proposition}
\label{proposition:largeRankVGRel}
Assume that $\charact \Bbbk = p>0$ and that $G$ is a finite $p$-group.
If $\dim V^G \geq \dim V - 2$, then $\hilbertIdeal_{G,S,V^G}$ is a
complete intersection.
\end{proposition}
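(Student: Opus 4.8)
The plan is to reduce the statement, via Proposition~\ref{proposition:extended}, to a question about a polynomial ring in at most two variables, and then to dispatch that case by an elementary argument. Write $W = V^G$ and $S' = \Sym W^\perp$, and set $J = \hilbertIdeal_{G,S,W} \cap S'$. By Proposition~\ref{proposition:extended} we have $\hilbertIdeal_{G,S,W} = JS$; since $S = S'[x_r, \dots, x_n]$ is a polynomial ring over $S'$, extension of ideals along $S' \hookrightarrow S$ preserves both height and the minimal number of generators, so $\hilbertIdeal_{G,S,W}$ is a complete intersection if and only if $J$ is one in $S'$. The role of the hypothesis is that $\dim S' = \rank_\Bbbk W^\perp = \rank_\Bbbk V - \rank_\Bbbk V^G \leq 2$. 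Moreover, from Corollary~\ref{corollary:ass} one reads off that $\hilbertIdeal_{G,S,W}$, hence $J$, has height $\dim S'$, and that $J \subseteq W^\perp S'$, so $J$ is a proper ideal with $\sqrt{J}$ equal to the graded maximal ideal of $S'$; in particular $S'/J$ is artinian.

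Next I would treat the three cases $\dim S' \in \{0,1,2\}$ in turn. If $\dim S' = 0$ then $S' = \Bbbk$ and $J = 0$ (indeed $G$ then acts trivially and $\hilbertIdeal_{G,S,W} = 0$), which is a complete intersection. If $\dim S' = 1$, then $S'$ is a polynomial ring in one variable, hence a principal ideal domain, and $J$ is a nonzero proper ideal (it has height $1$), so it is generated by a single nonzerodivisor; again a complete intersection.

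The only case with content is $\dim S' = 2$, and it is the only step that is not purely formal. Since $W^\perp = (V^G)^\perp$ is a nonzero $G$-stable subspace of $V^*$ — if $\phi \in W^\perp$, $g \in G$ and $v \in V^G$, then $(g\phi)(v) = \phi(g^{-1}v) = \phi(v) = 0$ — and $G$ is a $p$-group acting $\Bbbk$-linearly on it in characteristic $p$, we have $(W^\perp)^G \neq 0$ by~\cite[4.0.1]{CampbellWehlauModularInvThy11}. Choose a nonzero $G$-invariant linear form $x_1 \in W^\perp$ and extend it to a basis $\{x_1, x_2\}$ of $W^\perp$, so that $S' = \Bbbk[x_1, x_2]$ with $x_1$ invariant. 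Then $x_1 \in W^\perp S \cap S^G$, hence $x_1 \in \hilbertIdeal_{G,S,W}$, and since $x_1 \in S'$ we get $x_1 \in J$. Passing to $S'/(x_1)S' = \Bbbk[x_2]$, the image $\bar J$ of $J$ is an ideal of a principal ideal domain; it is nonzero because $J$ has height $2$ (so $J \neq (x_1)S'$) and it is proper because $J \subseteq W^\perp S'$. Hence $\bar J = (x_2^k)$ for some $k \geq 1$, and therefore $J = (x_1, x_2^k)S'$, which is generated by the regular sequence $x_1, x_2^k$. Thus $J$ is a complete intersection, and the proposition follows.

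So the main obstacle, such as it is, amounts to isolating the case $\dim S' = 2$ and realizing that, once one knows $J$ is extended from $S'$ (Proposition~\ref{proposition:extended}) and contains an invariant linear form, killing that form lands in a one‑variable polynomial ring where every ideal is principal. It is also worth flagging why the method does not extend: for $\dim S' \geq 3$, killing an invariant linear form leaves a polynomial ring in at least two variables, in which an ideal with radical the maximal ideal need not be a complete intersection, and — as the remark following Proposition~\ref{proposition:extended} shows — the residual ideal need not itself be a relative Hilbert ideal, so no obvious induction is available. This is precisely why Theorem~\ref{theorem:largeRankVG} is stated only for $\rank V^G \geq \rank V - 2$.
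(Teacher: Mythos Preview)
Your proof is correct and follows essentially the same route as the paper: reduce via Proposition~\ref{proposition:extended} to the ideal $J = \hilbertIdeal_{G,S,V^G}\cap S'$ in the at-most-two-variable ring $S'$, observe that $J$ contains an invariant linear form $x_1$, and conclude. The only cosmetic difference is that the paper obtains $x_1 \in S^G$ from the triangular basis of Lemma~\ref{lemma:triang}, whereas you argue directly that $(V^G)^\perp$ is $G$-stable and invoke the $p$-group fixed-point lemma; both yield the same invariant linear form, and your treatment of the quotient $S'/(x_1)\simeq\Bbbk[x_2]$ merely makes explicit what the paper leaves implicit.
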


\begin{proof}
We will apply Proposition~\ref{proposition:extended} with $W = V^G$.
First, construct a basis $\{v_1, \ldots, v_n \}$ of $V$ following the proof
of Lemma~\ref{lemma:triang} with $W = V^G$.
Let $\{x_1,\ldots, x_n\}$ be
the basis of $V^*$ dual to $\{v_1, \ldots, v_n \}$.

Then $v_i \in V^G$ for all $i \geq 3$.
Therefore  $W^\perp = \Bbbk\langle x_1 \rangle$ or
$W^\perp = \Bbbk\langle x_1, x_2 \rangle$.
Hence $S' = \Bbbk[x_1]$ or $S'  = \Bbbk[x_1, x_2]$.
In both cases $x_1 \in S^G$.
Therefore $x_1 \in \hilbertIdeal_{G,S,V^G} \cap S'$.
Hence $\hilbertIdeal_{G,S,V^G} \cap S'$ is a complete intersection.
By Proposition~\ref{proposition:extended},
$\hilbertIdeal_{G,S,V^G}$ is a complete intersection.
\end{proof}

\begin{proof}[Proof of Theorem~\protect{\ref{theorem:largeRankVG}}]
Apply Propositions~\ref{proposition:largeRankVGRel}
and~\ref{proposition:Broer22}\eqref{proposition:Broer22:mingens}.
\end{proof}

\begin{proof}[Proof of Theorem~\protect{\ref{theorem:singloc}}]

Write $\kappa(\frakp )$ for the residue field at $\frakp$
and $K$ for the fraction field of $R$.
We compute
$S \otimes_R \kappa(\frakp )$ in two different ways:
\[
S \otimes_R \kappa(\frakp )
= (S/\frakp S )
\otimes_{R/\frakp} \kappa(\frakp )
=
(R \minus \frakp )^{-1 } S \otimes_{R_\frakp }
\kappa(\frakp ).
\]

By Corollary~\ref{corollary:freemod},
$S/\hilbertIdeal_{G,S,W }$ is is a free $R/\frakp$-module.
Therefore
\[
\dim_{\kappa(\frakp)} ( S \otimes_R \kappa(\frakp ))=
\dim_{\Bbbk} ( S \otimes_R \Bbbk).
\]
On the other hand, since $R_\frakp$ is a regular local ring,
$(R \minus \frakp )^{-1 } S$ is a free $R_\frakp$-module.
Then
\[
\dim_{\kappa(\frakp)} ( S \otimes_R \kappa(\frakp ))=
\dim_{K} ( S \otimes_R K).
\]
Therefore
$\dim_{K} ( S \otimes_R K) = \dim_{\Bbbk} ( S \otimes_R \Bbbk)$.
Using the Nakayama lemma, we see that $S$ is a free $R$-module.
Therefore, by Discussion~\ref{discussionbox:free}, $R$ is a polynomial ring.
\end{proof}

\section{Direct summands}
\label{sec:summand}

In this section, we collect some (well-known) remarks about
direct summands of rings.
Recall that a subring $R$ of a ring $S$ is a \define{direct summand} if
there exists an $R$-linear map $\psi : S \to R$ such that $\psi \circ i
= \mathrm{id}_{R }$, where $i : R \to S$ is the inclusion map.

\begin{proposition}
\label{proposition:summandPolyCM}
Let $R \subseteq S$ be integral domains. Suppose that
there exists $s_0 \in S$ such that
the $R$-submodule $Rs_0$ of $S$ is a free $R$-module
and is a direct summand of $S$ as an $R$-module.
Then $R$ is a direct summand of $S$.
In particular, if $R$ is an $\naturals$-graded polynomial
ring and $S$ is an $\naturals$-graded Cohen-Macaulay domain that is a
finitely generated $R$-module, then $R$ is a direct summand of $S$.
\end{proposition}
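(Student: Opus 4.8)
The plan is to prove the first (more general) assertion and then derive the "in particular" clause by exhibiting a suitable $s_0$. For the general statement, suppose $S = Rs_0 \oplus M$ as $R$-modules, where $Rs_0$ is free of rank $1$ (so $s_0$ is not a zero-divisor, which is automatic here since $S$ is a domain). First I would fix the $R$-module projection $\pi : S \to Rs_0$ coming from this direct-sum decomposition. Composing $\pi$ with the $R$-linear isomorphism $Rs_0 \xrightarrow{\sim} R$ sending $s_0 \mapsto 1$ gives an $R$-linear map $\psi : S \to R$. To check $\psi \circ i = \mathrm{id}_R$, note that for $a \in R$ we have $as_0 \in Rs_0$, so $\pi(as_0) = as_0$, hence $\psi(a) = a$. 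Thus $R$ is a direct summand of $S$.

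For the second assertion, assume $R$ is an $\naturals$-graded polynomial ring and $S$ is an $\naturals$-graded Cohen--Macaulay domain that is module-finite over $R$. Since $S$ is a finitely generated graded module over the polynomial ring $R$ and is Cohen--Macaulay of the same dimension as $R$ (being a module-finite domain extension), the Auslander--Buchsbaum formula forces $S$ to be a free graded $R$-module; this is exactly the mechanism already used in Corollary~\ref{corollary:freemod}. Writing $S = \bigoplus_{i} R e_i$ with homogeneous $e_i$, and noting that $S_0 = \Bbbk$ forces exactly one basis element, say $e_1$, to lie in degree $0$ (a unit multiple of $1$), we may take $s_0 = 1 \in S$. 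Then $R s_0 = R e_1$ is a free rank-one $R$-module that is a direct summand of $S$ (it is one of the summands in the graded free decomposition), and the first part applies.

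The only point requiring a little care is the claim that the graded free basis of $S$ over $R$ can be chosen so that exactly one element sits in degree $0$: this follows because $S_0 = R_0 = \Bbbk$ (both are graded connected $\Bbbk$-algebras), so the degree-$0$ part of $S$ as an $R$-module is one-dimensional over $\Bbbk$, and in a minimal homogeneous generating set precisely one generator has degree $0$, which we normalize to $1$. I do not expect any serious obstacle here; the content is entirely standard homological algebra over a polynomial ring, and every ingredient (freeness via Auslander--Buchsbaum, splitting off a free direct summand) has already appeared in the paper.
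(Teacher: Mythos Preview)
Your argument for the first assertion has a small but genuine gap. As you define it, $\psi = j \circ \pi$, where $\pi : S \to Rs_0$ is the projection and $j : Rs_0 \to R$ sends $rs_0 \mapsto r$. For $a \in R$ this gives $\psi(a) = j(\pi(a))$, and there is no reason that $\pi(a) = as_0$: the submodule $R = R\cdot 1$ need not lie inside $Rs_0$. (Take $R = \Bbbk$, $S = \Bbbk[x]$, $s_0 = x$, with $M = \bigoplus_{i \neq 1} \Bbbk x^i$; then $\pi(1) = 0$, so your $\psi$ sends $1 \mapsto 0$.) Your verification step in fact computes $\pi(as_0)$ rather than $\pi(a)$, which is the right computation for a \emph{different} map, namely $\psi(s) := j(\pi(s\cdot s_0))$: first multiply by $s_0$, then project, then identify $Rs_0$ with $R$. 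That corrected map is $R$-linear and satisfies $\psi(a) = a$ for $a \in R$, and it is precisely the map the paper constructs (the paper writes the multiplication-by-$s_0$ step explicitly as $h : S \to S$, $s \mapsto ss_0$, and sets $\psi = j \circ \pi \circ h$). With this one-line fix your proof coincides with the paper's.

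Your treatment of the ``in particular'' clause is correct and agrees with the paper: freeness of $S$ over $R$ via Auslander--Buchsbaum, after which one may take $s_0 = 1$. Your extra remark isolating the degree-$0$ basis element is a perfectly good way to make the application of the first part explicit; the paper simply says ``$S$ is a free $R$-module'' and stops.
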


\begin{proof}
Let $h : S \to S$ be the $R$-linear map $s \mapsto ss_0$,
$\pi : S \to Rs_0$ be the $R$-linear projection map and
$j : Rs_0 \to R$ the $R$-linear map $rs_0 \mapsto r$.
Define $\psi = j \circ \pi \circ h$.
Then $\psi \circ i = \mathrm{id}_R$; this proves the first assertion.
To prove the second assertion, note, by the Auslander-Buchsbaum formula,
that $S$ is a free $R$-module. Now choose $s_0 \in S$ that can be extended
to an $R$-basis of $S$.
\end{proof}

\begin{proposition}
\label{proposition:summandSub}
Let $R \subseteq A \subseteq S$ be rings. If $R$ is a direct summand of
$S$, then it is a direct summand of $A$.
\end{proposition}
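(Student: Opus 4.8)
The plan is to obtain the splitting of $A\hookrightarrow R$ by simply restricting the given splitting of $S$. Let $i\colon R\to S$ and $i'\colon R\to A$ be the inclusions, and let $\iota\colon A\to S$ be the inclusion, so that $i=\iota\circ i'$. By hypothesis there is an $R$-linear map $\psi\colon S\to R$ with $\psi\circ i=\mathrm{id}_R$. I would then set $\varphi=\psi\circ\iota\colon A\to R$.

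Two things have to be checked, both routine. First, $\varphi$ is $R$-linear: the inclusion $\iota$ is in particular a homomorphism of $R$-modules, and a composite of $R$-linear maps is $R$-linear. Second, $\varphi$ splits $i'$: we have $\varphi\circ i'=\psi\circ\iota\circ i'=\psi\circ i=\mathrm{id}_R$, using the compatibility $i=\iota\circ i'$ of the inclusions. Hence $\varphi$ exhibits $R$ as a direct summand of $A$.

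There is essentially no obstacle in this argument; the statement is purely formal. The only potential subtlety is keeping track of the three inclusion maps and verifying that the restriction of an $R$-linear map to an $R$-submodule remains $R$-linear, both of which are immediate.
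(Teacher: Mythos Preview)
Your proof is correct and is exactly the paper's argument: take the $R$-linear splitting $\psi\colon S\to R$ and restrict it to $A$. You have simply written out the verification in more detail than the paper does.
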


\begin{proof}
There exists an $R$-linear map $\psi : S \to R$ such that $\psi(r ) = r$
for each $r \in R$. Take $\psi|_A : A \to R$.
\end{proof}

\section{A composition series of $G$}
\label{section:compositionseries}

As mentioned in the introduction, we are interested in the following
situation: $\charact \Bbbk = p > 0$ and $G$ is a $p$-group
generated by elements that act as transvections (i.e. non-diagonalizable
pseudoreflections) on $V$.

\begin{notationbox}
The set $\{v_1, \ldots, v_n \}$ will denote a basis of $V$ as in
Lemma~\ref{lemma:triang}.
We write $\{x_1,\ldots, x_n\}$ for the dual basis of $V^*$ dual to
$\{v_1, \ldots, v_n \}$.
For $g\in G$, define
\[
\beta_g :=\max \{j \mid x_j \; \text{appears in}\; gx_i-x_i
\;\text{for some}\;i\}.
\]
Let $\calP$ denote the set of pseudoreflections of $G$.
Define $\beta_G :=\max \{\beta_g\mid g\in \mathcal{P}\}$.
\end{notationbox}

%

\begin{discussionbox}
\label{discussionbox:betag}
With respect to the ordered basis $\{x_1, \ldots, x_n\}$, elements of $G$
act on
$V^*$ as upper triangular matrices, with $1$s on the diagonal.
Let $g \in \calP$. Then $g$ is given by a matrix of the form
\[
\begin{bmatrix} I_{\beta_g} & A \\ 0 & I_{n-\beta_g} \end{bmatrix}
\]
where $\rank A = 1$. Fix a non-zero column vector 
\[
\begin{bmatrix}
c_1 \\ \vdots \\ c_{\beta_g}
\end{bmatrix}
\]
of $A$. Let $l = \sum_{i=1 }^{\beta_g } c_i x_i$, which is an element of 
$\Bbbk \langle x_1, \ldots, x_{\beta_g} \rangle$.
Hence we have the following:
\begin{enumerate}

\item
For all $j \leq \beta_g$, $g x_j = x_j$; in particular, if $f \in
\Bbbk[x_1, \ldots, x_{\beta_g}] \subseteq S$, then $gf = f$.

\item
For all $j > \beta_g$, $g x_j - x_j \in 
\Bbbk \langle x_1, \ldots, x_{\beta_g} \rangle$, and, therefore, 
is divisible by $l$ in $S$.

\end{enumerate}
On $V$, with respect to the basis 
$\{v_1, \ldots, v_n\}$, $g$ is given by
\[
\begin{bmatrix} I_{\beta_g} & 0 \\ -A^{\text{tr}} & 
I_{n-\beta_g} \end{bmatrix}
\]
Hence $\beta_g = \max \{i \mid g v_i  \neq v_i\}$.
\end{discussionbox}

Note that $\beta_g$ depends on the choice of the ordered basis 
$\{v_1, \ldots, v_n \}$.

\begin{lemma}
  \label{lemma:conjOfTrans}
  Let $g, h \in \calP$.
  \begin{enumerate}
\item
\label{lemma:conjOfTrans:differentbeta}
$ghg^{-1} \in \calP$ and $\beta_{ghg^{-1}} = \beta_h$.

    \item
    \label{lemma:conjOfTrans:samebeta}
    If $\beta_g = \beta_h$ then $gh=hg$.

  \end{enumerate}
\end{lemma}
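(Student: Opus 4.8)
The plan is to work with the matrix description guaranteed by Lemma~\ref{lemma:triang}: every $g \in G$ is lower-triangular unipotent with respect to the ordered basis $\{v_1, \ldots, v_n\}$, and a pseudoreflection $g \in \calP$ fixes a hyperplane. For such a transvection, $g - \mathrm{id}$ has rank one, so there is a linear form $\ell_g \in V^*$ and a vector $w_g \in V$ with $g(v) - v = \ell_g(v)\, w_g$ for all $v \in V$; unipotence forces $\ell_g(w_g) = 0$. Reading off $\beta_g$ in these terms: since $g$ is lower-triangular unipotent, $w_g \in \Bbbk\langle v_{a}, \ldots, v_n\rangle$ for $a = \min\{j : \text{the $v_j$-component of some } g v_i - v_i \text{ is nonzero}\}$ and in fact $\beta_g = \max\{j : w_g \text{ has nonzero } v_j\text{-coefficient}\}$, because $g v_i - v_i = \ell_g(v_i) w_g$ and the largest index occurring is exactly the top index of $w_g$ (taken over the $i$ with $\ell_g(v_i) \neq 0$).

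For part~\eqref{lemma:conjOfTrans:differentbeta}: a conjugate of a transvection is a transvection (conjugation is an automorphism of $\GL(V)$ preserving the ``rank-one difference from identity'' condition), with $(ghg^{-1}) - \mathrm{id} = g\,(h - \mathrm{id})\,g^{-1}$, so $\ell_{ghg^{-1}} = \ell_h \circ g^{-1}$ and $w_{ghg^{-1}} = g(w_h)$. Now I use that $g$ is lower-triangular unipotent: $g(w_h) - w_h \in \Bbbk\langle v_{j+1}, \ldots, v_n\rangle$ where $j = \beta_h$ is the top index of $w_h$, and since the $v_j$-coefficient of $g(w_h)$ equals that of $w_h$ (which is nonzero) while higher coefficients may change, the top index of $g(w_h)$ is still exactly $j$. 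Hence $\beta_{ghg^{-1}} = \beta_h$. (One must also note $ghg^{-1} \in G$ since $G$ is a group, and it is a $p$-element, so it lies in $\calP$ as a pseudoreflection in $G$.)

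For part~\eqref{lemma:conjOfTrans:samebeta}: suppose $\beta_g = \beta_h =: j$. Consider the commutator $c = ghg^{-1}h^{-1}$. Using the rank-one descriptions, $gh(v) = v + \ell_h(v) w_h + \ell_g(v + \ell_h(v) w_h) w_g$, and expanding $ghg^{-1}h^{-1}$ one finds $c(v) - v$ is a combination of $w_g$ and $w_h$ with coefficients that are linear in $v$ and each divisible by one of the ``small'' scalars $\ell_g(w_h)$ or $\ell_h(w_g)$. The key point is that both $w_g$ and $w_h$ have top index exactly $j$, so $\ell_g(w_h)$ and $\ell_h(w_g)$ involve only the $x_j$-coordinate; but since $g$ (resp.\ $h$) is \emph{lower}-triangular unipotent and $w_g$ (resp.\ $w_h$) is its image-direction with top index $j$, the linear form $\ell_g$ (resp.\ $\ell_h$) vanishes on $v_j$ — equivalently, $\ell_g, \ell_h \in \Bbbk\langle x_1, \ldots, x_{j-1}\rangle$, since $g v_j - v_j \in \Bbbk\langle v_{j+1},\ldots,v_n\rangle$ forces $\ell_g(v_j)=0$, and $w_g \in \Bbbk v_j + \Bbbk\langle v_{j+1},\ldots\rangle$ while $\ell_g$ kills $v_{j}, v_{j+1},\ldots, v_n$ (each $g v_i = v_i$ for $i \ge j$... more carefully, $g v_i - v_i \in \Bbbk\langle v_{i+1},\ldots\rangle$ together with top-index $j$ of $w_g$ gives $\ell_g(v_i) = 0$ for all $i \ge j$). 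Therefore $\ell_g(w_h) = 0$ and $\ell_h(w_g) = 0$, which makes $c = \mathrm{id}$, i.e.\ $gh = hg$.

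The main obstacle I anticipate is the bookkeeping in part~\eqref{lemma:conjOfTrans:samebeta}: getting the commutator formula cleanly and pinning down precisely why $\ell_g(w_h)$ and $\ell_h(w_g)$ vanish. The crux is the interplay between ``lower-triangular unipotent'' and ``transvection direction has top index $j$'': the fixed hyperplane $\ker \ell_g$ must contain $v_j, v_{j+1}, \ldots, v_n$, and $w_g$ lives in $\Bbbk\langle v_j, \ldots, v_n\rangle$, so $w_g \in \ker \ell_g \cap \ker \ell_h$ when $\beta_g = \beta_h$, and symmetrically for $w_h$. Once that is established the commutator computation collapses immediately.
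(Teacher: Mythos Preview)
Your identification of $\beta_g$ is wrong, and this undermines both parts. You assert that $\beta_g$ is the largest index $j$ for which $w_g$ has nonzero $v_j$-coefficient. But $\beta_g$ is defined via the action on $V^*$, and since $(g-\mathrm{id})x_i$ is (for every $i$) a scalar multiple of the single linear form $\ell_g$, one has $\beta_g = \max\{j : x_j \text{ appears in } \ell_g\} = \max\{j : \ell_g(v_j)\neq 0\}$; this is exactly the remark $\beta_g = \max\{j : gv_j\neq v_j\}$. In particular $\ell_g(v_{\beta_g})\neq 0$, the opposite of what you claim. The correct picture, with $m=\beta_g$, is: $\ell_g \in \Bbbk\langle x_1,\ldots,x_m\rangle$ with nonzero $x_m$-term, and (reading off $gv_m - v_m = \ell_g(v_m)\,w_g \in \Bbbk\langle v_{m+1},\ldots,v_n\rangle$) one gets $w_g \in \Bbbk\langle v_{m+1},\ldots,v_n\rangle$. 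So $\beta_g$ records the top of $\ell_g$, while $w_g$ lives strictly \emph{above} $\beta_g$.

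For part~\eqref{lemma:conjOfTrans:differentbeta} this means that tracking $w_{ghg^{-1}} = g(w_h)$ does not track $\beta$; you must instead track $\ell_{ghg^{-1}} = g\cdot\ell_h$ on $V^*$, where $g$ is upper-triangular unipotent and hence preserves the top index of $\ell_h$. That is precisely the paper's one-line argument. For part~\eqref{lemma:conjOfTrans:samebeta} your commutator strategy is salvageable once you use the correct containments: with $\beta_g=\beta_h=m$ one has $\ell_g\in\Bbbk\langle x_1,\ldots,x_m\rangle$ and $w_h\in\Bbbk\langle v_{m+1},\ldots,v_n\rangle$, so indeed $\ell_g(w_h)=0$, and symmetrically $\ell_h(w_g)=0$; then $gh(v)-hg(v)=\ell_h(v)\ell_g(w_h)w_g-\ell_g(v)\ell_h(w_g)w_h=0$. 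The paper's proof of~\eqref{lemma:conjOfTrans:samebeta} is the dual-space version of this same observation: $w_g\in\Bbbk\langle v_{m+1},\ldots,v_n\rangle$ is equivalent to $g$ fixing $x_1,\ldots,x_m$, whence $g(hx_i-x_i)=hx_i-x_i$.
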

\begin{proof}
\eqref{lemma:conjOfTrans:differentbeta}:
%
%
If $j > \beta_h$, then $\Bbbk \langle v_j, \dots, v_n \rangle$ is stable
under the action of $G$ and is fixed by $h$,
so $g h g^{-1}$ fixes $v_j$, from which it follows that
$\beta_{g h g^{-1}} \leq \beta_h$. 
Equality now holds by symmetry.

\eqref{lemma:conjOfTrans:samebeta}:
%
%
%
%
Let $m = \beta_g = \beta_h$. Let $1 \leq i \leq n$.
Then, we saw in Discussion~\ref{discussionbox:betag}, 
$gx_i-x_i \in \Bbbk \langle x_1, \ldots, x_m \rangle$.
Similarly $hx_i-x_i \in \Bbbk \langle x_1, \ldots, x_m \rangle$.
Hence, using Discussion~\ref{discussionbox:betag} again, 
we see that $g (hx_i-x_i ) = hx_i-x_i$ and $h (gx_i-x_i ) = gx_i-x_i$.
Therefore
\[
g(h(x_i)) = g(x_i + (hx_i-x_i)) = x_i + (gx_i-x_i ) + (hx_i-x_i)
=h(g(x_i)).
\qedhere
\]
\end{proof}


Using the above lemma, we see that there is a composition series of $G$
consisting of subgroups generated by transvections.

\begin{proposition}
\label{proposition:ourcompositionseries}
$G$ has a composition series
$0  = G_0 \subsetneq G_1 \subsetneq \cdots \subsetneq G_k = G$
such that for each $1 \leq l \leq k$, $G_l$ is a transvection group and
$G_{l}/G_{l-1}$ is isomorphic to $\ints/p\ints$ and is generated by the
residue class of a transvection.
\end{proposition}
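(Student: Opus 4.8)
The plan is to build the composition series from the top down, peeling off one cyclic factor generated by a transvection at each stage. The key structural input is Lemma~\ref{lemma:conjOfTrans}: conjugation preserves the invariant $\beta_g$ on pseudoreflections, and two transvections with the same $\beta$ commute. So first I would let $m = \beta_G$ and consider the set $\calP_m := \{g \in \calP \mid \beta_g = m\}$ of transvections achieving the maximal value. By Lemma~\ref{lemma:conjOfTrans}\eqref{lemma:conjOfTrans:samebeta} the elements of $\calP_m$ pairwise commute, so the subgroup $N$ they generate is abelian (in fact elementary abelian of exponent $p$, since $G$ is a $p$-group and these are transvections, hence of order $p$). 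By Lemma~\ref{lemma:conjOfTrans}\eqref{lemma:conjOfTrans:differentbeta}, $N$ is normal in $G$: conjugating a generating transvection $g$ with $\beta_g = m$ by any element of $G$ produces a transvection with the same $\beta$, hence again an element of $\calP_m \subseteq N$. (One should note that a priori conjugation by a general element of $G$, not just by a pseudoreflection, still sends transvections to transvections with the same $\beta$ — this follows from the hyperplane/defining-linear-form argument in the proof of Lemma~\ref{lemma:conjOfTrans}, which only used that $g \in \GL(V)$.)

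Next I would pick any transvection $g \in \calP_m$ and let $H = \langle g \rangle \cong \ints/p\ints$, a transvection group of order $p$ sitting inside the normal abelian subgroup $N$. Since $N$ is abelian, $H \trianglelefteq N$; but I actually want $H$ normal in all of $G$, which need not hold. The cleaner route is: take $N$ itself as the first ``chunk'' to peel off, but refine it. Because $N$ is elementary abelian and normal in $G$, the conjugation action of $G$ on $N$ is $\bbF_p$-linear, and a one-dimensional $G$-submodule $\langle g \rangle$ exists — here is where I need another small argument. Actually the slick way is to pass to $G/H$ only after arranging $H \trianglelefteq G$: since $N \trianglelefteq G$ is a nontrivial $p$-group and $G$ is a $p$-group, $N \cap Z(G) \neq 0$ (standard: a normal subgroup of a $p$-group meets the centre nontrivially). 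Pick a nontrivial element $z \in N \cap Z(G)$; being in $N$, $z$ is a product of commuting transvections of $\beta$-value $m$, but $z$ itself need not be a transvection. Hmm — so this needs care. The correct fix: take $H$ to be generated by a transvection $g \in \calP_m$ and observe that by Lemma~\ref{lemma:conjOfTrans} the $G$-conjugates of $g$ all lie in $\calP_m$ and all commute with $g$ (same $\beta$), so the normal closure $\langle\langle g \rangle\rangle$ of $g$ in $G$ is contained in the abelian group generated by $\calP_m$; thus $\langle\langle g\rangle\rangle$ is elementary abelian and normal. Inside it, $G$ acts linearly over $\bbF_p$, and since $G$ is a $p$-group acting on a nonzero $\bbF_p$-vector space it has a nonzero fixed vector, i.e. there is a transvection-or-product $g' \in \langle\langle g\rangle\rangle$ with $g' \in Z_G$. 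To get an actual transvection that is central modulo nothing, restrict attention: $\calP_m$ generates an elementary abelian normal subgroup $A$, $G$ acts on $A$, and I want a $G$-fixed line spanned by an element of $\calP_m$. This is the one genuinely delicate point.

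Granting that, let $g_1 \in \calP_m$ span a $G$-stable line in $A$; then $G_1 := \langle g_1 \rangle \cong \ints/p\ints$ is normal in $G$, is a transvection group, and $G_1/G_0 = G_1$ is as required. Now pass to $\bar G := G/G_1$ acting on $V$ — wait, $G_1$ acts nontrivially on $V$, so $\bar G$ does not act on $V$ in the obvious way. This is the real subtlety: the composition series is of $G$ as an abstract group, but the conditions ``$G_l$ is a transvection group'' and ``$G_l/G_{l-1}$ generated by the class of a transvection'' refer to the action on $V$. So the induction must be set up on $G$ itself, constructing the $G_l$ as an increasing chain of normal-in-$G$ subgroups, each a transvection group, with cyclic quotients of order $p$. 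I would induct on $|G|$: having produced $G_1 \cong \ints/p\ints$ normal in $G$ and a transvection group, I cannot simply quotient; instead I proceed by downward induction producing $G = G_k \supsetneq G_{k-1} \supsetneq \cdots$ where at each step $G_{l-1}$ is obtained from $G_l$ by the same argument applied to $G_l$ in place of $G$ — here I use that any transvection group has $V^{G_l} \neq 0$ and the structural lemmas apply verbatim to $G_l$. The subgroups so produced are normal in $G_l$ hence I must check normality in $G$; this is where I would need $G_{l-1}$ characteristic in $G_l$ or argue directly that the construction (taking the subgroup generated by transvections of maximal $\beta$, then a $G_l$-fixed line in it) produces something $G$-stable when $G_l \trianglelefteq G$ — which it does, since the maximal-$\beta$ transvection set of $G_l$ is permuted by $G$-conjugation (again Lemma~\ref{lemma:conjOfTrans}).

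The main obstacle, as indicated, is the passage from ``$A := \langle \calP_m \rangle$ is elementary abelian and normal'' to ``there is an element of $\calP_m$ spanning a $G$-stable line in $A$'': a general $G$-fixed vector in $A$ is a product of transvections and need not itself be a transvection, and a transvection group of order $p$ inside $A$ need not be $G$-stable. I expect this to require either (i) a more careful analysis showing that within the span of $\calP_m$ the transvections are exactly the nonzero scalar multiples of elements of $\calP_m$ (using that all these transvections share the same hyperplane direction structure forced by $\beta_g = m$), so that a $G$-fixed line automatically contains one; or (ii) reordering the basis / refining $\beta_G$ to a finer invariant so that $\calP_m$ is a single $G$-orbit up to scalars. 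I would pursue (i): if $g, h \in \calP_m$ commute, analyze $gh$ and show it is a pseudoreflection iff $g, h$ have proportional ``transvection directions,'' concluding that the set of transvections in $A$ together with $\{1\}$ is a union of lines, so a nonzero $G$-fixed vector lies on one of them and generates the desired $G_1$.
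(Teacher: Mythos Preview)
Your approach is considerably more involved than the paper's, and the obstacle you flag as ``the one genuinely delicate point'' is real and not resolved by your proposed fix. The paper avoids it with a short bottom-up construction: for each $l$, set $G_l := \langle\, g \in \calP \mid \beta_g \leq l \,\rangle$. By Lemma~\ref{lemma:conjOfTrans}\eqref{lemma:conjOfTrans:differentbeta} each $G_l$ is normal in $G_{l+1}$, and by Lemma~\ref{lemma:conjOfTrans}\eqref{lemma:conjOfTrans:samebeta} the quotient $G_{l+1}/G_l$ is abelian, generated by the residue classes of the transvections with $\beta$-value $l{+}1$; since these have order $p$, the quotient is elementary abelian. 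One then refines by choosing transvections $g_1,\ldots,g_t$ with $\beta$-value $l{+}1$ whose classes form an $\bbF_p$-basis of $G_{l+1}/G_l$ and inserting the subgroups $G_l\langle g_1,\ldots,g_i\rangle$: each is visibly a transvection group, each is normal in the next because the ambient quotient is abelian, and each successive quotient is $\ints/p\ints$ generated by the class of $g_{i+1}$.

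Your fix (i) does not close the gap. Two commuting transvections $g,h \in \calP_m$ can have distinct fixed hyperplanes (the condition $\beta_g=\beta_h=m$ only forces the defining linear forms to share leading term $x_m$) and linearly independent directions $w_g,w_h$; in that case $(g-1)(h-1)=(h-1)(g-1)$ forces $(g-1)(h-1)=0$, so $gh-1=(g-1)+(h-1)$ has rank $2$ and $gh$ is not a pseudoreflection. Thus the transvections in $A = \langle\calP_m\rangle$, while closed under powers and hence a union of $\bbF_p$-lines, do not exhaust $A$, and ``a nonzero $G$-fixed vector lies on one of them'' is a non-sequitur: the $G$-fixed subspace may sit on a diagonal line disjoint from the transvection locus. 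More broadly, your inductive scheme oscillates between seeking a normal order-$p$ transvection subgroup (the bottom of the chain) and an index-$p$ transvection subgroup (the top), and neither quotienting by $G_1$ nor recursing on $G_{k-1}$ cleanly produces the remaining terms with the required properties. The paper's $\beta$-filtration makes the transvection-group condition automatic at every stage and reduces the refinement to linear algebra inside an elementary abelian quotient.
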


\begin{proof}
For $1 \leq l \leq \beta_G$, define $G_l$ to be the group generated by
$\{g \in \calP \mid \beta_g \leq l\}$. By Lemma~\ref{lemma:conjOfTrans}
$G_l$ is normal in $G_{l+1}$ and $G_{l+1}/G_l$ is an abelian group
generated by the residue classes of $\{g \in \calP \mid \beta_g =l+1\}$.
Refine this filtration, if necessary.
\end{proof}

We give the explicit description of such a
filtration for Nakajima groups in Example~\ref{example:NakajimaFiltration}.
Various authors (see,
e.g.,~\cite[Section~1]{BraunPolynomialringofInvariants2022})
have used inertia subgroups of prime
ideals generated by linear subspaces of $V^*$ (the space of homogeneous
linear forms in $S$) to study homological properties of $S^G$. In
Example~\ref{example:notInertia}, we show that our filtration is not
necessarily given by such inertia subgroups.
Moreover, the reason for looking at the above composition series is the
hope that we could prove Conjecture~\ref{conjecture:SWB}
by induction on the order of the group.
When $G$ is a $p$-group and $\charact \Bbbk =p$, if the invariant ring is a
direct summand, then $G$ is generated
by transvections~\cite[Theorem~2(ii)]{BroerDirectSummandProperty2005}.
Therefore we would like the subgroups in the composition series to be
generated by transvections.
Example~\ref{example:notInertia} shows that inertia subgroups need not be
generated by transvections, in general.

We end this section with some observations. A partial converse to the next
proposition will be proved in Section~\ref{section:dimn} (see
Proposition~\ref{proposition:RpolyThenApoly}).

\begin{proposition}
\label{proposition:allSplit}
$0 = G_0 \subsetneq G_1 \subsetneq \cdots \subsetneq G_k = G$ be the
filtration of Proposition \ref{proposition:ourcompositionseries}.
Assume that $S^{G_l}$ is a direct summand of $S$ for each $l$.
Then $S^{G_l}$ is a polynomial ring for each $l$.
\end{proposition}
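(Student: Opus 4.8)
\emph{Proof proposal.}

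The plan is to induct on $l$, the base case $l=0$ being immediate since $S^{G_0}=S$ is a polynomial ring. For the inductive step, suppose $T:=S^{G_{l-1}}$ is a polynomial ring. Since $G_{l-1}$ is normal in $G_l$, the quotient $\bar G:=G_l/G_{l-1}$ acts on $T$ by graded $\Bbbk$-algebra automorphisms, with $T^{\bar G}=(S^{G_{l-1}})^{G_l}=S^{G_l}$; by Proposition~\ref{proposition:ourcompositionseries} we have $\bar G\cong\ints/p\ints$, and the action is faithful because $[\Frac T:\Frac T^{\bar G}]=|G_l|/|G_{l-1}|=p$. By hypothesis $S^{G_l}$ is a direct summand of $S$, so, as $S^{G_l}=T^{\bar G}\subseteq T\subseteq S$, Proposition~\ref{proposition:summandSub} shows that $T^{\bar G}$ is a direct summand of $T$. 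Thus the inductive step reduces to the assertion: if $\bar G\cong\ints/p\ints$ acts by graded $\Bbbk$-algebra automorphisms on a polynomial ring $T$ over a field of characteristic $p$ and $T^{\bar G}$ is a direct summand of $T$, then $T^{\bar G}$ is a polynomial ring.

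To prove this, write $\bar G=\langle\sigma\rangle$ and $n=\dim T$. The heart of the matter is to show that $\sigma$ acts on $T$ as a ``generalized transvection'': that $T$ has homogeneous algebra generators $h_1,\ldots,h_n$ with $\sigma(h_i)=h_i$ for $i<n$ and $\sigma(h_n)=h_n+\delta$ for some homogeneous $\delta\in\Bbbk[h_1,\ldots,h_{n-1}]$, equivalently that $\sigma$ induces a transvection on $T_+/T_+^2$. Granting this, $\delta$ is $\sigma$-fixed, so the norm $N:=\prod_{i\in\bbF_p}(h_n+i\delta)=h_n^p-\delta^{p-1}h_n$ lies in $T^{\bar G}$; the ring $T$ is integral over $\Bbbk[h_1,\ldots,h_{n-1},N]$, which is integrally closed and has the same fraction field as $T^{\bar G}$ (both of index $p$ in $\Frac T$), whence $T^{\bar G}=\Bbbk[h_1,\ldots,h_{n-1},N]$ is a polynomial ring. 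Alternatively, with the transvection shape in hand one may finish via Theorem~\ref{theorem:singloc}: $T^{\bar G}$ is an invariant ring, hence a normal domain, so $\dim\sing(T^{\bar G})\le n-2$, while the relevant $\bar G$-fixed subspace of (the dual of) $T_+/T_+^2$ has dimension $n-1$; so the singular locus has dimension less than the rank of the fixed subspace, and $T^{\bar G}$ is polynomial. To obtain the transvection shape, one uses that $\bar G$ is a $p$-group in characteristic $p$ whose invariant ring is a direct summand and appeals to Broer's theorem~\cite[Theorem~2(ii)]{BroerDirectSummandProperty2005} that such a group is generated by transvections; being cyclic, $\bar G$ is then generated by a single transvection, which, after triangularizing the induced action on $T_+/T_+^2$ (cf.\ Lemma~\ref{lemma:triang}), has the stated normal form.

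The main obstacle is that the $\bar G$-action on $T=S^{G_{l-1}}$ need not be a \emph{linear} action: a quotient of a $p$-group acting on a polynomial invariant ring can genuinely act by graded automorphisms that are not linearizable --- already $\sigma(a)=a$, $\sigma(b)=b+a^2$ on $\Bbbk[a,b]$ with $\deg a=1$, $\deg b=2$ arises this way, from the filtration of Proposition~\ref{proposition:ourcompositionseries}. Hence one cannot simply quote results phrased for linear representations, and the real work is to check that the tools used above --- the terminal-variable higher derivations of Section~\ref{section:prelimI}, Broer's direct-summand theorem, and, if used, the argument of Theorem~\ref{theorem:singloc} --- remain valid for an arbitrary graded action of a $p$-group on a polynomial ring in characteristic $p$. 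The elementary point that makes this work is that, for such an action, whenever $\Bbbk\langle h_1,\ldots,\widehat{h_j},\ldots,h_n\rangle$ is $\bar G$-stable one automatically has $g(h_j)\in h_j+\Bbbk[h_1,\ldots,\widehat{h_j},\ldots,h_n]$ for every $g\in\bar G$, since $g$ has $p$-power order and $\charact\Bbbk=p$. Note finally that the direct-summand hypothesis is indispensable for the transvection shape: without it $\bar G$ could act through a higher-dimensional indecomposable module (for instance the $3$-dimensional one when $p\ge 3$), and then $T^{\bar G}$ is not a polynomial ring.
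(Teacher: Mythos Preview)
Your inductive setup---reducing to a cyclic group $\bar G=G_l/G_{l-1}\cong\ints/p\ints$ acting by graded automorphisms on the polynomial ring $A=S^{G_{l-1}}$, with $R=S^{G_l}=A^{\bar G}$ a direct summand of $A$---is exactly the paper's. The divergence is in how this one step is finished.

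The paper's argument is three lines: by Broer's hypersurface theorem~\cite[Theorem~4, p.~588]{BroerHypersurfaces2006}, the direct-summand hypothesis alone yields $A=R[a]$ with $a$ satisfying a monic relation $T^p-r_1T-r_2$ over $R$; hence $A$ is free of rank $p$ over $R$, and Discussion~\ref{discussionbox:free} gives that $R$ is polynomial. The point is that Broer's Theorem~4 is stated for graded $\ints/p\ints$-extensions in this generality, so the non-linearity of the $\bar G$-action on $A$ is a non-issue.

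Your route instead tries first to force $\sigma$ into a ``generalized transvection'' normal form and then to read off $T^{\bar G}$ explicitly (or invoke Theorem~\ref{theorem:singloc}). You correctly flag that this requires extending~\cite[Theorem~2(ii)]{BroerDirectSummandProperty2005}, and possibly Theorem~\ref{theorem:singloc}, from linear representations to arbitrary graded actions; you call this ``the real work'' but do not carry it out, so the argument is incomplete as stated. There is also a specific error: the generalized transvection shape on $T$ is \emph{not} equivalent to $\sigma$ inducing a transvection on $T_+/T_+^2$. For instance, with $p=2$ and $T=\Bbbk[a,b,c,d]$ in degrees $1,1,2,2$, the automorphism $\sigma(c)=c+ab$, $\sigma(d)=d+a^2$ induces the \emph{identity} on $T_+/T_+^2$, yet no homogeneous degree-$2$ element outside $\Bbbk[a,b]$ is $\sigma$-fixed, so no choice of generators realises the one-variable-moved form. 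Your ``elementary point'' about unipotence addresses only the linear part of the action and says nothing about such quadratic shifts; one would still need to argue separately that the direct-summand hypothesis rules these out. The paper avoids all of this by citing the Broer result that already lives at the right level of generality.
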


\begin{proof}
Since $S = S^{G_0}$ is a polynomial ring, we may assume, by induction, that
$S^{G_{l-1 } }$ is a polynomial ring.
Note that $S^{G_l }$ is a direct summand of $S^{G_{l-1 } }$.
Write $A = S^{G_{l-1 } }$ and $R = S^{G_l}$.
Then there exists a homogeneous $a \in A$ such that $A = R[a]$;
see~\cite[Theorem~4, p.~588]{BroerHypersurfaces2006}.
Moreover, there exist $r_1, r_2 \in R$ such that
$R[a] \simeq R[T]/(T^p-r_1T-r_2 )$.
Hence $A$ is a free $R$-module.
Hence $R$ is a polynomial ring, by Discussion~\ref{discussionbox:free}.
\end{proof}

\section{Relative Hilbert ideals at consecutive stages in the composition
series}
\label{section:dimn}

\begin{notation}
Let
$0 = G_0 \subsetneq G_1 \subsetneq \cdots \subsetneq G_k = G$
be the filtration from
Proposition~\ref{proposition:ourcompositionseries}.
Write $G' = G_{k-1}$.
We denote by $\sigma$ a transvection in $G \minus G'$.
Let $R=S^{G}$ and $A=S^{G'}$.
Let $<$ denote the degree-lexicographic monomial order on $S$ with
$x_1<x_{2}<\cdots<x_n$.
\end{notation}

\begin{lemma}
  \label{lemma:dirSummandLeastDeg}
  Suppose that $R$ is a direct summand of $A$.
  Let $d_0=\min\{d\mid A_d\neq R_d\}$.
  \begin{enumerate}

    \item
    \label{lemma:dirSummandLeastDeg:one}
    $\dim_\Bbbk A_{d_0}/R_{d_0}=1$.

    \item
    \label{lemma:dirSummandLeastDeg:gen}
    $A=R[a]$ for all $a\in A_{d_0}\minus R_{d_0}$.

  \end{enumerate}
\end{lemma}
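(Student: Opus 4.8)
The plan is to derive both statements from the single fact that $A$ is generated over $R$ by one homogeneous element, which is exactly the input already used in the proof of Proposition~\ref{proposition:allSplit}. Since $G/G' \cong \ints/p\ints$ is generated by the residue class of the transvection $\sigma$ and $R$ is a direct summand of $A$, \cite[Theorem~4, p.~588]{BroerHypersurfaces2006} provides a homogeneous $b \in A$ with $A = R[b]$. Because $[\Frac(A):\Frac(R)] = [G:G'] = p$, we have $A \supsetneq R$, so $b \notin R$; and since $R_0 = A_0 = \Bbbk$, this forces $e := \deg b \geq 1$.

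Next I would read off the graded pieces of $A$. Writing $A = R[b] = \sum_{i\geq 0} R\,b^i$ and using that $b$ is homogeneous of degree $e$, we get $A_d = \sum_{i \geq 0} R_{d-ie}\,b^i$ for each $d$ (with the convention $R_j = 0$ for $j < 0$). Hence $A_d = R_d$ whenever $d < e$, while $A_e = R_e + \Bbbk b$, and this last sum is direct because $b \notin R$. Consequently $d_0 = e$ and $A_{d_0} = R_{d_0} \oplus \Bbbk b$, so $\rank_\Bbbk A_{d_0}/R_{d_0} = 1$, which is \eqref{lemma:dirSummandLeastDeg:one}.

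Finally, to prove \eqref{lemma:dirSummandLeastDeg:gen}, I would take an arbitrary $a \in A_{d_0}\minus R_{d_0}$; the description of $A_{d_0}$ obtained above lets me write $a = r + cb$ with $r \in R_{d_0}$ and $c \in \Bbbk$, where $c \neq 0$ since $a \notin R_{d_0}$. Then $b = c^{-1}(a-r) \in R[a]$, and therefore $A = R[b] \subseteq R[a] \subseteq A$, i.e.\ $A = R[a]$. All the substance of the argument lies in the cited theorem of Broer; the rest is a routine Hilbert-function computation, so I do not anticipate a genuine obstacle beyond checking that the hypotheses of \cite[Theorem~4]{BroerHypersurfaces2006} — that $G/G'$ is cyclic of order $p$ generated by the class of a transvection, and that $R$ is a direct summand of $A$ — hold here, which they do by the setup of this section (Proposition~\ref{proposition:ourcompositionseries}) together with the hypothesis of the lemma.
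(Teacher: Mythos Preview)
Your proof is correct and follows essentially the same approach as the paper: both invoke \cite[Theorem~4]{BroerHypersurfaces2006} to obtain a single homogeneous generator $b$ (resp.\ $a_0$) of $A$ over $R$, then deduce both parts by elementary graded-algebra considerations. Your version makes the Hilbert-function computation $A_d = \sum_i R_{d-ie}\,b^i$ explicit where the paper leaves the identification $\deg a_0 = d_0$ and the linear-dependence argument implicit, but the substance is identical.
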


\begin{proof}

\eqref{lemma:dirSummandLeastDeg:one}: By~\cite[Theorem~4,
p.~588]{BroerHypersurfaces2006}, there exists a homogeneous $a_{0} \in A
\minus R$ such that $A=R[a_0]$.  It is immediate that $\deg a_0 = d_0$.
Suppose there exists $a\in A_{d_0}\minus R_{d_0}$ that is
$\Bbbk$-linearly independent of $a_0$ in $A_{d_0}/R_{d_0}$.  Then $a \not
\in R[a_{0}]$ since the degrees of $a$ and $a_0$ are the same.  This proves
the assertion.

\eqref{lemma:dirSummandLeastDeg:gen}:
Suppose that $a\in A_{d_0}\minus R_{d_0}$.
Let $a_0$ be as in~\eqref{lemma:dirSummandLeastDeg:one}.
  Then, by~\eqref{lemma:dirSummandLeastDeg:one},
  $a+\lambda a_0\in R_{d_0}$ for some $\lambda\in\Bbbk^\times$.
  Hence $a_0\in R[a]$, so $A=R[a]$.
\end{proof}

\begin{lemma}
\label{lemma:sigmaFixes}
We may assume $\sigma(x_i)=x_i$ for all $i\neq n$.
\end{lemma}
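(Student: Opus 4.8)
The plan is to replace the ordered basis $\{v_1,\dots,v_n\}$ by another one, still satisfying the conclusion of Lemma~\ref{lemma:triang}, so that $v_n$ spans the line $\image(\sigma-\mathrm{id}_V)$, while leaving the composition series of Proposition~\ref{proposition:ourcompositionseries} and the relation $\sigma\in G\minus G'$ untouched. This suffices: since $\sigma$ is a transvection, the operator $x\mapsto \sigma x-x$ on $V^*$ has rank one with kernel $\{x\in V^*\mid x(\image(\sigma-\mathrm{id}_V))=0\}$, so if $v_n$ spans $\image(\sigma-\mathrm{id}_V)$ then $x_i(\image(\sigma-\mathrm{id}_V))=0$ for every $i\neq n$, that is, $\sigma x_i=x_i$ for all $i\neq n$.

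First I would pin down the line $\image(\sigma-\mathrm{id}_V)$. Put $m:=\beta_\sigma$; I claim $m=\beta_G$. Let $N:=\langle g\in\calP\mid \beta_g\le\beta_G-1\rangle$. By Lemma~\ref{lemma:conjOfTrans}, $N$ is a normal subgroup of $G$ and it occurs among the terms of the composition series of Proposition~\ref{proposition:ourcompositionseries}; moreover $N\subsetneq G$, since otherwise $G$ would be generated by transvections fixing $v_{\beta_G}$, contradicting the existence of a transvection with $\beta$-value equal to $\beta_G$. Hence $N\subseteq G_{k-1}=G'$, so $\sigma\notin N$; as $\sigma\in\calP$ this forces $\beta_\sigma\ge\beta_G$, whence $\beta_\sigma=\beta_G=m$. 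It follows that $u:=\sigma v_m-v_m$ is nonzero and, by Lemma~\ref{lemma:triang}, lies in $\Bbbk\langle v_{m+1},\dots,v_n\rangle$, hence spans $\image(\sigma-\mathrm{id}_V)$; and since every element of $G$ fixes $v_{m+1},\dots,v_n$ we have $\Bbbk\langle v_{m+1},\dots,v_n\rangle\subseteq V^G$, so in particular $u\in V^G$.

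Next I would keep $v_1,\dots,v_m$ and replace $v_{m+1},\dots,v_n$ by any basis of $U:=\Bbbk\langle v_{m+1},\dots,v_n\rangle$ whose last element is $u$. Since $U\subseteq V^G$ and $U$ occupies the last coordinates, the new ordered basis again satisfies Lemma~\ref{lemma:triang}. Then I would check that nothing else moves: the first $m$ dual basis vectors are unchanged (each $x_i$ with $i\le m$ already annihilates $U$), so the subspaces $\Bbbk\langle x_1,\dots,x_j\rangle$ with $j\le m$ are unchanged; and since $G$ acts trivially on $V^*/\Bbbk\langle x_1,\dots,x_m\rangle\cong U^*$, one has $(g-\mathrm{id}_{V^*})(V^*)\subseteq\Bbbk\langle x_1,\dots,x_m\rangle$ for every $g\in G$. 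As $\beta_g$ is the least $j$ with $(g-\mathrm{id}_{V^*})(V^*)\subseteq\Bbbk\langle x_1,\dots,x_j\rangle$, it is unchanged for every $g\in G$; hence $\beta_G$, all the groups $\langle g\in\calP\mid\beta_g\le l\rangle$, the whole composition series, and the fact that $\sigma$ is a transvection in $G\minus G'$, are the same for the new basis. Finally $x_i(u)=\delta_{in}$ for the new dual basis, so $\sigma x_i=x_i$ for $i\neq n$ by the first paragraph.

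The step I expect to be the crux is the verification in the third paragraph that re-choosing a basis inside $U\subseteq V^G$ leaves every $\beta_g$ — and therefore the composition series and the relation $\sigma\in G\minus G'$ — untouched; this rests on the bound $\beta_g\le\beta_G=m$ valid for all $g\in G$, and on the fact that the flag $\Bbbk\langle x_1,\dots,x_j\rangle$, $j\le m$, is the annihilator of $\Bbbk\langle v_{j+1},\dots,v_n\rangle$ and hence is not affected by the modification.
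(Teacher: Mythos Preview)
Your proof is correct. The paper and you take closely related but distinct routes.

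The paper works directly on $V^*$: it sets $i_\sigma=\max\{i\mid\sigma x_i\neq x_i\}$, subtracts suitable multiples of $x_{i_\sigma}$ from the other $x_i$ with $i>\beta_\sigma$, and swaps $x_{i_\sigma}$ with $x_n$; it then asserts, rather tersely, that the action remains upper-triangular and that every $\beta_\tau$ is unchanged. You work dually on $V$: after proving $\beta_\sigma=\beta_G$ (a fact the paper leaves implicit), you observe that $U=\Bbbk\langle v_{m+1},\dots,v_n\rangle\subseteq V^G$ and simply re-choose a basis of $U$ with last vector $u=(\sigma-1)v_m$. Because the modification is confined to $V^G$ and leaves $v_1,\dots,v_m$ fixed, the flags $\Bbbk\langle x_1,\dots,x_j\rangle$ for $j\le m$ are unchanged, so the invariance of every $\beta_g$ is immediate rather than asserted. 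Your explicit derivation of $\beta_\sigma=\beta_G$ is exactly what guarantees the change stays inside $V^G$, and this makes the preservation of the composition series transparent; the paper's coordinate formulas amount to the dual of your construction, but your argument supplies the justification that the paper's sketch omits.
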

\begin{proof}
Let $i_\sigma=\text{max}\{i\mid \sigma x_i\neq x_i\}$.
Note that $i_\sigma > \beta_\sigma$.
For $i>\beta_\sigma$, define $\lambda_i$ by $\sigma
x_i-x_i=\lambda_i(\sigma x_{i_\sigma}-x_{i_\sigma})$.
We define a change of coordinates as follows:
\[
x'_i :=
\begin{cases}
x_i, & \text{if}\; i \leq \beta_\sigma,\\
x_i-\lambda_i x_{i_\sigma}, & \text{if}\; i > \beta_\sigma
\;\text{and}\; i\not\in \{i_\sigma, n \},\\
x_n, & \text{if}\; i = i_\sigma,\\
x_{i_\sigma}, & \text{if}\; i = n.\\
\end{cases}
\]

Note that the action of $G$ on $\Bbbk\langle x'_1, \ldots, x'_n \rangle$ is
`upper-triangular'.
Let $\tau \in G$ be a transvection. Then
\[
\max \{j \mid x_j \;
\text{appears in}\; \tau x_i-x_i \;\text{for some}\; i\}
=
\max \{j \mid x'_j \;
\text{appears in}\; \tau x'_i-x'_i \;\text{for some}\; i\}.
\]

We see this as follows. Both sides give the maximum $j$ that appears in a
linear form defining
the reflecting hyperplane of $\tau$.
However the equation of the reflecting
hyperplane of $\tau$ involves only $x_i$ with 
$i \leq \beta_\sigma$. 
Let $\alpha_1x_1 + \cdots + \alpha_{\beta_\sigma}x_{\beta_\sigma}$ be
a linear form that defines the reflecting hyperplane of $\tau$.
Then in the new coordinates 
$\alpha_1x'_1 + \cdots + \alpha_{\beta_\sigma}x'_{\beta_\sigma}$ 
defines the reflecting hyperplane of $\tau$.
Hence $\beta_\tau$ does not change after this change of coordinates.
\end{proof}

\begin{lemma}
  \label{lemma:algGens}
Suppose that $R$ is a direct summand of $A$.
Then there exists a minimal algebra generating set $\{x_1,a_2,\ldots,a_m\}$
of $A$ with $m\geq n$ such that
  \begin{enumerate}
  \item
  \label{lemma:algGens:one}
    there exists a unique $ 2 \leq i_0 \leq m$ such that
    $a_{i_0 } \not \in R$.

  \item
  \label{lemma:algGens:order}
  $x_1 < \In_>(a_2) < \cdots <  \In_>(a_{m})$;

    \item
  \label{lemma:algGens:exactlyonexnpower}
  there exists a unique $2 \leq i_1 \leq m$ such that
  $\In_>(a_{i_1}) = x_n^{\deg a_{i_1}}$.
  \end{enumerate}
Moreover, $\deg a_{i_1 }$ is a power of $p$.
  \end{lemma}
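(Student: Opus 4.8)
The plan is to build the generating set by going through the extension $R \subseteq A$ in a controlled way and then applying a Gröbner-basis normalization. First I would use Lemma~\ref{lemma:dirSummandLeastDeg} to fix the single ``extra'' generator: since $R$ is a direct summand of $A$, there is a homogeneous $a \in A \minus R$ with $A = R[a]$ and $\deg a = d_0 := \min\{d \mid A_d \neq R_d\}$, and the class of $a$ spans $A_{d_0}/R_{d_0}$. Start from any minimal homogeneous algebra generating set of $R$; since $S$ (hence $R$, $A$) is a standard graded $\Bbbk$-algebra with $A$ a finite $R$-module, $x_1 \in R$ can be taken as one of these generators (it is an invariant of degree $1$, using that $x_1$ is $G$-invariant as in Lemma~\ref{lemma:triang} / Proposition~\ref{proposition:largeRankVGRel}, where $x_1$ is the first dual basis vector and is fixed by all of $G$). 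Adjoining $a$ gives a minimal generating set $\{x_1, a_2, \ldots, a_m\}$ of $A$ with a unique index $i_0$ with $a_{i_0} \notin R$, which is part~\eqref{lemma:algGens:one}; here $m \geq n$ because $A$ has Krull dimension $n$, so it needs at least $n$ algebra generators.

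Next I would arrange the ordering condition~\eqref{lemma:algGens:order}. The initial terms $\In_>(a_j)$ with respect to the degree-lexicographic order are monomials; by replacing generators with suitable $\Bbbk$-linear combinations (within a fixed degree, and subtracting off lower generators whose initial terms divide) I can ensure the initial terms are pairwise distinct, hence totally ordered, and after relabelling we get $x_1 = \In_>(x_1) < \In_>(a_2) < \cdots < \In_>(a_m)$. One must check this reshuffling does not destroy minimality or the property that exactly one generator lies outside $R$; minimality is preserved because we only do degree-preserving invertible changes and subtract decomposable terms, and the ``outside $R$'' property is governed purely by degrees via Lemma~\ref{lemma:dirSummandLeastDeg}\eqref{lemma:dirSummandLeastDeg:gen}, so a generator of degree $\neq d_0$ stays inside $R$ after adding elements of $R$.

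For parts~\eqref{lemma:algGens:exactlyonexnpower} and the final claim, I would use Lemma~\ref{lemma:sigmaFixes}: we may assume the transvection $\sigma \in G \minus G'$ fixes $x_1, \ldots, x_{n-1}$ and moves only $x_n$. The key point is that $\Spec A = \Spec S /\!/ G'$ is $n$-dimensional, so the monomials $\In_>(a_j)$ must generate an ideal of height $n$ in $\Bbbk[x_1,\ldots,x_n]$ (this is the standard fact that $\In_>(A)$, the semigroup ring of leading terms, has the same dimension $n$; equivalently $A$ and its associated graded have the same Hilbert function). Since $x_1 = \In_>(x_1)$ is already present and $\Bbbk[x_1,\ldots,x_n]$ has dimension $n$, among the $\In_>(a_j)$ there must be one that is a power of $x_n$ — otherwise the variety of the initial ideal would not be $\{0\}$, as the $x_n$-axis would lie in it. That there is a \emph{unique} such $j$ follows from~\eqref{lemma:algGens:order}: two distinct pure powers of $x_n$ are comparable and the smaller one would divide the larger, so the larger generator would be redundant in a minimal Gröbner-type generating set (more precisely, one could subtract a monomial multiple and lower its initial term, contradicting distinctness of initial terms after our normalization). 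Finally, to see $\deg a_{i_1}$ is a power of $p$: the element $a_{i_1}$ is $G'$-invariant with $\In_>(a_{i_1}) = x_n^{d}$ where $d = \deg a_{i_1}$; apply $\sigma$, which fixes $x_1, \ldots, x_{n-1}$ and sends $x_n \mapsto x_n + \ell$ for some $\ell \in \Bbbk\langle x_1,\ldots,x_{n-1}\rangle$ (unipotent, order $p$). Then $\sigma(a_{i_1}) - a_{i_1}$ is a $G'$-invariant of degree $d$ supported on monomials with $x_n$-degree $< d$, so it lies in $A_{<d}$-part; combined with the fact that $a_{i_1}$ together with $x_1, \ldots, x_{n-1}$ and the lower $a_j$'s generate $A$, a degree/leading-term bookkeeping argument (or: the classical fact that a hypersurface section invariant of a $\ints/p$-action whose leading term is a pure power must have that power equal to $|\langle\sigma\rangle| = p$, cf.~the structure in Proposition~\ref{proposition:allSplit} where $A = R[a] \cong R[T]/(T^p - r_1 T - r_2)$) forces $d = p$ when $k = 1$, and in general $d = p^e$ by iterating along the composition series.

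The main obstacle I expect is the last paragraph: cleanly proving that the pure-power initial term forces the degree to be exactly a power of $p$, rather than just divisible by $p$ or bounded by $|G|$. The cleanest route is probably to reduce to the one-step case $A = S^{G'} \subseteq A^{\langle\bar\sigma\rangle}$ — no wait, we want $R \subseteq A$ one step the \emph{other} way — i.e. use that $A = R[a_{i_0}]$ with $a_{i_0}^p - r_1 a_{i_0} - r_2 = 0$ from Proposition~\ref{proposition:allSplit}'s proof, combined with the leading-term analysis; the pure power $x_n^d$ arises as the leading term of the element realizing the $\ints/p$-extension, and comparing leading terms in $a_{i_0}^p = r_1 a_{i_0} + r_2$ pins down the exponent via the Frobenius-like behavior $\In_>(a_{i_0}^p) = \In_>(a_{i_0})^p$.
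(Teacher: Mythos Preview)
Your treatment of parts~\eqref{lemma:algGens:one} and~\eqref{lemma:algGens:order} is essentially the paper's argument, and is fine. The real content of the lemma is in part~\eqref{lemma:algGens:exactlyonexnpower} and the ``power of $p$'' claim, and here your proposal has two genuine gaps.

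First, your argument that $\deg a_{i_1}$ is a power of $p$ does not work. You try to use the transvection $\sigma$ and the relation $a_{i_0}^p - r_1 a_{i_0} - r_2 = 0$, but this relation constrains $a_{i_0}$, not $a_{i_1}$, and in any case it places no restriction on the \emph{degree} of $a_{i_0}$: the identity $\In_>(a_{i_0}^p)=\In_>(a_{i_0})^p$ is compatible with any degree. Applying $\sigma-1$ to $a_{i_1}$ gives an element of $A$ with smaller $x_n$-degree, but you have no control over its \emph{leading monomial}, so you cannot conclude that some earlier generator already has a pure $x_n$-power as initial term. The paper instead uses the locally finite iterative higher derivation $\Delta_{x_n}^{(t)}$ attached to the terminal variable $x_n$ (Section~\ref{section:prelimI}): if $\deg_{x_n}(a_{i_1})=p^e m$ with $p\nmid m>1$, then $\Delta_{x_n}^{((m-1)p^e)}(a_{i_1})$ is an element of $A$ of total degree $p^e$ whose leading term is $x_n^{p^e}$, contradicting the minimality of $i_1$. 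This is the missing tool.

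Second, your uniqueness argument for $i_1$ is incomplete. Knowing only $\deg a_{i_1}<\deg a_j$ does not give $\deg a_{i_1}\mid\deg a_j$, so you cannot simply subtract a power of $a_{i_1}$. More seriously, even once divisibility is established, if $i_0=i_1$ then $a_{i_1}\notin R$, so replacing $a_j$ by $a_j-\lambda a_{i_1}^{\deg a_j/\deg a_{i_1}}$ would destroy property~\eqref{lemma:algGens:one}. The paper handles this by a case split: when $i_0\neq i_1$ one subtracts a power of $a_{i_1}\in R$; when $i_0=i_1$ one first uses the $\Delta_{x_n}$ argument again to force $p^{e_1+1}\mid\deg a_j$, and then subtracts a power of the norm $\prod_{\sigma} a_{i_1}\in R$ instead. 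Your sketch does not anticipate this obstruction.
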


\begin{proof}
By Lemma~\ref{lemma:dirSummandLeastDeg},
 $A = \Bbbk[x_1, a_2, \ldots, a_{m}]$ for some $a_2, \ldots, a_{m}
\in A$ with $m\geq n+1$, such that all but one of them is in $R$.
From this we choose a minimal generating set
which we continue to call
$x_1, a_2, \ldots, a_{m}$, $m \geq n$, after relabelling the elements, if
necessary.
Note that all but one of the elements of the minimal generating set is in
$R$.
This proves~\eqref{lemma:algGens:one}.

Now suppose that $a, a' \in A$ are part of a $\Bbbk$-algebra
generating set $X$ of $A$ with $\In_>(a) = \In_>(a')$ that
satisfies~\eqref{lemma:algGens:one}.
If $a \not \in R$ or $a' \not \in R$, assume, after relabelling if
necessary, that $a \in R$ and $a' \not \in R$.
Note that $(X \cup
\{a' + \lambda a\} ) \minus \{a' \}$ generates $A$ for all $\lambda \in
\Bbbk^\times$.  For a suitable $\lambda \in \Bbbk^\times$, $\In_>(a'+\lambda a) <
\In_>(a)$. Therefore, without loss of generality we can assume
$\In_>x_1<\In_>a_2<\cdots<\In_>a_m$.
This proves~\eqref{lemma:algGens:order}.

To prove~\eqref{lemma:algGens:exactlyonexnpower},
let
\[
i_1 := \min \{ i \mid  \In_>(a_i) = x_n^{\deg a_i} \}.
\]
(This set is non-empty since  $S$ is integral over $A$.)
Now, for all $r \in A$, if $\deg_{x_n}(r) = p^em$
with $p \nmid m$, then $\deg_{x_n}(\Delta_{x_n}^{(m-1)p^e}r) = p^e$.
Hence, if $\deg_{x_n}(a_{i_1})$ is not a power of $p$, we can find an element
$r \in A$ such that $\deg r < \deg a_{i_1}$ and $\In_>(r)  = x_n^{\deg r }$.
Hence there must exist $j < i_1$ such that
$a_j \in A$ and $\In_>(a_j) = x_n^{\deg a_j}$, contradicting the
minimality of $i_1$.
Therefore write $\deg a_{i_1 } = p^{e_1}$.

Now suppose that $j > i_1$ and $\In_>(a_j) = x_n^{\deg a_j }$.
If $p^{e_1 } \nmid \deg a_j$, then applying
$\Delta_{x_n}^{(t)}$ to it for a suitable $t$, we obtain $a \in A$ with
$\In_>(a ) = x_n^{\deg a }$ and $\deg a < \deg (a_{i_1 } )$, which implies
that there
exists $i< i_1$ with $\In_>(a_i) = x_n^{\deg a_i }$, contradicting the
choice of $i_1$. Hence $p^{e_1 } \mid \deg a_j$. We now consider the
different cases.

\underline{Case 1: $i_0 = i_1$}. Then $a_j \in R$.
Assume, for now, that $p^{e_1+1 } \nmid \deg a_j$.
Applying $\Delta_{x_n}^{(t)}$ to it for a suitable $t$, we obtain $a \in R$
with $\In_>(a ) = x_n^{\deg a }$ and $\deg a = \deg (a_{i_1 } )$.
Hence $a$ belongs to the subalgebra of $A$ generated by
$\{a_i \mid 1 \leq i \leq m , \deg a_i \leq \deg (a_{i_1 }) \}$.
However, since $i_0 = i_1$ and $\In_>(a ) = x_n^{\deg a_{i_1}}$
it follows that $a = \lambda a_{i_1 } + a'$ for some $\lambda \in
\Bbbk^\times$ and $a' \in R$, contradicting the fact that $a \in R$.
Hence $p^{e_1+1 } \mid \deg a_j$.
Then we may replace $a_j$ by
\[
a_j -
\lambda \left(\prod_\sigma a_{i_1}\right )^{\frac{\deg a_j}{p\deg a_{i_1}}}
\]
for a suitable $\lambda \in \Bbbk^\times$.
This may violate~\eqref{lemma:algGens:order}, but the steps in its proof
may be repeated to ensure that~\eqref{lemma:algGens:order} is satisfied.

\underline{Case 2: $i_0 \neq i_1$}. Then $a_{i_1} \in R$.
We may replace $a_j$ by
\[
a_j - \lambda a_{i_1}^{\frac{\deg a_j}{\deg a_{i_1}}}
\]
for a suitable $\lambda \in \Bbbk^\times$.
Again, steps in the proof of~\eqref{lemma:algGens:order}
may be repeated to ensure that~\eqref{lemma:algGens:order} is satisfied.
This completes the proof of the lemma.
\end{proof}

\begin{lemma}
\label{lemma:secondgen}
Adopt the notation from Lemma~\ref{lemma:algGens}.
Assume that $\beta_\sigma > 1$.
Then $a_2 \in \Bbbk[x_1, \ldots, x_{n-1}]^{G}$.
\end{lemma}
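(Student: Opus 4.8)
The plan is to reduce the lemma to the single statement $\deg_{x_n}(a_2)=0$, and then to prove that by combining the degree-minimality of $a_2$ (giving a sharp shape for its $x_n$-expansion via the derivations $\Delta_{x_n}^{(j)}$) with the Frobenius identity together with the hypothesis $\beta_\sigma>1$. For the reduction: by Lemma~\ref{lemma:triang} the vector $v_n$ is $G$-fixed, so $\Bbbk\langle x_1,\dots,x_{n-1}\rangle=(\Bbbk v_n)^{\perp}$ is $G$-stable; hence it is $G'$-stable as well, so each $\Delta_{x_n}^{(j)}$ maps both $R=S^{G}$ and $A=S^{G'}$ into themselves, and $S':=\Bbbk[x_1,\dots,x_{n-1}]$ is a $G$-stable subring of $S$. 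By Lemma~\ref{lemma:sigmaFixes}, $\sigma$ fixes $x_1,\dots,x_{n-1}$ pointwise and $\sigma(x_n)=x_n+\ell$, where $\ell:=\sigma(x_n)-x_n$ is a nonzero linear form in $x_1,\dots,x_{n-1}$; write $\ell=\sum_{i\le\beta_\sigma}b_ix_i$, so that the hypothesis $\beta_\sigma>1$ says exactly that $2\le\beta_\sigma\le n-1$ and $b_{\beta_\sigma}\ne0$. Once $\deg_{x_n}(a_2)=0$ is known, $a_2\in S'$, hence $\sigma(a_2)=a_2$, hence $a_2\in A^{\sigma}$; since $G'$ is normal in $G$ with $G/G'$ generated by the class of $\sigma$ (Proposition~\ref{proposition:ourcompositionseries}), $A^{\sigma}=(S^{G'})^{G/G'}=S^{G}=R$, so $a_2\in S'\cap R=(S')^{G}=\Bbbk[x_1,\dots,x_{n-1}]^{G}$.

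Set $a=\deg a_2$. First I would pin down the shape of $a_2$. Since $<$ is degree-lexicographic, Lemma~\ref{lemma:algGens}\eqref{lemma:algGens:order} gives $\deg a_j\ge a$ for all $j\ge2$; hence every monomial in $x_1,a_2,\dots,a_m$ of degree $<a$ is a power of $x_1$, so $A_{<a}=\Bbbk[x_1]_{<a}$, and $A_a$ has $\Bbbk$-basis $\{x_1^{a}\}\cup\{a_j\mid 2\le j\le m,\ \deg a_j=a\}$. Write $a_2=\sum_{l\ge0}c_l x_n^{l}$ with $c_l\in S'$ homogeneous of degree $a-l$. For $j\ge1$ the element $\Delta_{x_n}^{(j)}(a_2)=\sum_{l\ge j}\binom{l}{j}c_l x_n^{\,l-j}$ lies in $A$ and has degree $a-j<a$, hence lies in $\Bbbk[x_1]$; comparing the components of each $x_n$-degree gives $c_j\in\Bbbk[x_1]$ for every $j\ge1$ and $\binom{l}{j}c_l=0$ whenever $1\le j<l$, which for $l\ge2$ forces $c_l=0$ unless $l$ is a power of $p$. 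Writing $c_l=\gamma_lx_1^{a-l}$ with $\gamma_l\in\Bbbk$, we get
\[
a_2=c_0+\gamma_1x_1^{a-1}x_n+\sum_{f\ge1}\gamma_{p^{f}}x_1^{\,a-p^{f}}x_n^{\,p^{f}},\qquad c_0\in S'_a .
\]

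Next I would compute $\sigma(a_2)-a_2$ in two ways. On one hand $\sigma(a_2)-a_2\in A_a$; and since $\sigma$ fixes $x_1,\dots,x_{n-1}$ and sends $x_n\mapsto x_n+\ell$, every monomial of $\sigma(f)-f$ is strictly below $\In_>(f)$, so every monomial of $\sigma(a_2)-a_2$ is $<\In_>(a_2)$. Expanding $\sigma(a_2)-a_2$ in the above basis of $A_a$: if a coefficient of some $a_j$ were nonzero, then for the largest such $j$ the (strictly maximal, hence surviving) monomial $\In_>(a_j)$ would occur and satisfy $\In_>(a_j)\ge\In_>(a_2)$, a contradiction; hence $\sigma(a_2)-a_2=\lambda x_1^{a}$ for some $\lambda\in\Bbbk$. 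On the other hand, applying $\sigma$ to the displayed expression for $a_2$ and using $(x_n+\ell)^{p^{f}}=x_n^{p^{f}}+\ell^{p^{f}}$ gives
\[
\sigma(a_2)-a_2=\gamma_1x_1^{a-1}\ell+\sum_{f\ge1}\gamma_{p^{f}}x_1^{\,a-p^{f}}\ell^{\,p^{f}}.
\]
If some $\gamma_l\ne0$, put $d=\deg_{x_n}(a_2)\ge1$, so $d=1$ or $d=p^{e}$. Since $\ell^{\,p^{f}}=\sum_i b_i^{\,p^{f}}x_i^{\,p^{f}}$ with $b_{\beta_\sigma}\ne0$, the monomial $x_1^{\,a-d}x_{\beta_\sigma}^{\,d}$ occurs in the right-hand side of the last display only through the summand involving $\ell^{\,d}$, with coefficient $\gamma_d b_{\beta_\sigma}^{\,d}\ne0$, whereas it does not occur in $\lambda x_1^{a}$; this is exactly where $\beta_\sigma>1$ is used, since for $\beta_\sigma=1$ one would have $x_{\beta_\sigma}^{\,d}=x_1^{d}$ and no contradiction. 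Hence every $\gamma_l=0$, i.e.\ $\deg_{x_n}(a_2)=0$, and the lemma follows as in the first paragraph.

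I expect the main technical point to be obtaining the equality $\sigma(a_2)-a_2=\lambda x_1^{a}$ cleanly: one must argue carefully that in the $A_a$-basis expansion the coefficient of every $a_j$ (including $a_2$ itself) vanishes, which is where the ordering $x_1<\In_>(a_2)<\cdots<\In_>(a_m)$ of Lemma~\ref{lemma:algGens} is essential. The conceptual crux is simply that, because of $\beta_\sigma>1$, the Frobenius expansion of $\sigma(a_2)$ produces a genuinely new monomial in the variable $x_{\beta_\sigma}$ that cannot be matched by a pure power of $x_1$.
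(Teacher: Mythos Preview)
Your proof is correct and follows essentially the same route as the paper: both reduce to showing $a_2\in\Bbbk[x_1,\dots,x_{n-1}]$, use the derivations $\Delta_{x_n}^{(t)}$ together with $A_{<\deg a_2}=\Bbbk[x_1]_{<\deg a_2}$ to force the $x_n$-part of $a_2$ to be $\sum_i \lambda_i x_1^{\deg a_2-p^i}x_n^{p^i}$, and then compare $(\sigma-1)a_2$ (which must lie in $\Bbbk x_1^{\deg a_2}$ by the minimality of $\In_>(a_2)$ among elements of $A\setminus\Bbbk[x_1]$) with its explicit Frobenius expansion, using $\beta_\sigma>1$ to produce a forbidden monomial in $x_{\beta_\sigma}$. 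Your write-up is simply more explicit than the paper's at each step---in particular, you spell out the $A_a$-basis argument that the paper compresses into the phrase ``contradicting the choice of $a_2$''.
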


\begin{proof}
Since $A \cap \Bbbk[x_1, \ldots, x_{n-1}] \subseteq R$,
it suffices to show that
$a_2 \in \Bbbk[x_1, \ldots, x_{n-1}]$.
By way of contradiction assume that ${x_n}$ divides some terms of $a_2$.
Then in every such term, the exponent of ${x_n}$ must be a power of $p$; for
otherwise, by taking $\Delta_{x_n}^{(t)}a_2$ for a suitable $t$, we get a
(non-zero)
element of $R \minus \Bbbk[x_1 ] $, contradicting the minimality of
$\In_>(a_2)$. Using the same argument, we see that $a_2$ must be of the
form
\[
a_{2,0 } + \sum_{i \geq 0 } \lambda_i x_1^{\deg a_2 - p^i} {x_n}^{p^i}
\]
with $a_{2,0 }  \in \Bbbk[x_1, \ldots, x_{n-1}]$ and $\lambda_i \in \Bbbk$.
Hence
\[
(\sigma -1  ) a_2 =
(\sigma -1  )a_{2,0 } + \sum_{i \geq 0 } \lambda_i x_1^{\deg a_2 - p^i}
(\sigma -1  ){x_n}^{p^i}.
\]
By Lemma~\ref{lemma:sigmaFixes}, $(\sigma-1) a_{2,0 } = 0$.
Note that $(\sigma -1)x_n \not \in \Bbbk\langle x_1 \rangle$.
Therefore, unless $\lambda_i = 0$ for all $i \geq 0$,
$(\sigma-1)a_2$ would be a (non-zero) element of $A \minus \Bbbk[x_1]$ with
$\In_>((\sigma-1)a_2 ) < \In_>(a_2)$, contradicting the choice of $a_2$.
Hence $\lambda_i = 0$ for all $i \geq 0$.
\end{proof}

\begin{lemma}\label{lemma:a3notinR}
Adopt the notation of Lemma~\ref{lemma:algGens}.
Suppose that $R$ is a direct summand of $A$.
Let $W = \Bbbk \langle v_n \rangle$.
If $i_0 = i_1$, then
\[
\hilbertIdeal_{G', S, W} =
\hilbertIdeal_{G, S, W} =
(x_1,a_2, \ldots,a_{i_0-1}, a_{i_0+1}, \ldots,a_m)S.
\]
\end{lemma}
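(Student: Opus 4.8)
The plan is to establish the two equalities separately. For the first, $\hilbertIdeal_{G',S,W} = \hilbertIdeal_{G,S,W}$, I would use Proposition~\ref{proposition:extended} to reduce both sides to ideals of $S' = \Sym W^\perp = \Bbbk[x_1,\ldots,x_{n-1}]$, noting that $W^\perp = \Bbbk\langle x_1,\ldots,x_{n-1}\rangle$ since $W = \Bbbk\langle v_n\rangle$. Thus it suffices to show $\hilbertIdeal_{G',S,W}\cap S' = \hilbertIdeal_{G,S,W}\cap S'$. The containment $\supseteq$ is clear since $S^G \subseteq S^{G'}$. For $\subseteq$, I would argue that any invariant of $G'$ that lies in $(W^\perp S)\cap S^{G'}$ can, via the terminal-variable higher derivations $\Delta_{x_n}^{(t)}$ applied to its $G$-orbit (or rather, using that $[G:G']=p$ and $\sigma$ acts only on $x_n$ by Lemma~\ref{lemma:sigmaFixes}), be corrected to a $G$-invariant in the same ideal; concretely, since $\sigma$ fixes $x_1,\ldots,x_{n-1}$, an element of $S'$ that is $G'$-invariant is automatically fixed by a conjugate-closed generating set, hence is $G$-invariant — so in fact $S'\cap S^{G'} = S'\cap S^G$ already, which gives the equality immediately.

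For the second equality, $\hilbertIdeal_{G,S,W} = (x_1,a_2,\ldots,a_{i_0-1},a_{i_0+1},\ldots,a_m)S$, I would use that by Lemma~\ref{lemma:algGens} the elements $x_1, a_2,\ldots,a_m$ generate $A = S^{G'}$ as a $\Bbbk$-algebra, and that exactly one of them, namely $a_{i_0}$, fails to lie in $R = S^G$, while (by the hypothesis $i_0 = i_1$) this same $a_{i_0}$ is the unique generator whose initial term is a pure power $x_n^{\deg a_{i_0}}$. I would show that the listed elements all lie in $\hilbertIdeal_{G,S,W}$: each $a_i$ with $i\neq i_0$ is in $R$, and I claim each such $a_i$ in fact lies in $W^\perp S \cap S^G$, i.e. lies in the ideal $(x_1,\ldots,x_{n-1})S$. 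This is where $i_0=i_1$ is used: the initial term of $a_i$ (for $i\neq i_1$) is not a pure power of $x_n$, so $a_i$ has a term divisible by some $x_j$ with $j<n$; combined with the structure forced on the generators (as in the proof of Lemma~\ref{lemma:secondgen}, every $x_n$-term has $x_n$-exponent a power of $p$ and the rest of that monomial is a power of $x_1$), one sees $a_i$ itself — after the normalization of Lemma~\ref{lemma:algGens} — lies in $(x_1,\ldots,x_{n-1})S \cap S^G \subseteq \hilbertIdeal_{G,S,W}$.

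For the reverse containment, I would invoke Proposition~\ref{proposition:Broer22}\eqref{proposition:Broer22:poly}--\eqref{proposition:Broer22:hilb}: with $W$ of codimension $n-1$ in $V^G$ is \emph{not} the right framing since $W\subseteq V^G$ need not hold — instead I note $v_n\in V^G$ (as $\beta_G < n$ when $\sigma\notin G'$ forces... ) so $W\subseteq V^G$ indeed holds, $W$ has codimension $n-1$, and $R/\frakp$ is a polynomial ring in one variable $f_n$ with $\hilbertIdeal_{G,S} = \hilbertIdeal_{G,S,W} + (f_n)S$. Since $\dim S/\hilbertIdeal_{G,S,W} = \rank_\Bbbk W = 1$ by Corollary~\ref{corollary:ass}, and since the ideal $J := (x_1,a_2,\ldots,\widehat{a_{i_0}},\ldots,a_m)S$ is contained in $\hilbertIdeal_{G,S,W}$ with $S/J$ also one-dimensional (as $x_1,\ldots,x_{n-1}$ are among the generators of $A$ modulo $J$, forcing $S/J$ to be a quotient of $\Bbbk[x_n]$-type), a dimension/Cohen--Macaulay comparison via Corollary~\ref{corollary:ass} forces $J = \hilbertIdeal_{G,S,W}$.

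\textbf{Main obstacle.} The delicate point is the claim that every generator $a_i$ with $i\neq i_0=i_1$ actually lies in the \emph{ideal} $(x_1,\ldots,x_{n-1})S$, not merely that it is $G$-invariant. The minimal generating set from Lemma~\ref{lemma:algGens} only guarantees $\In_>(a_i)\neq x_n^{\deg a_i}$ for $i\neq i_1$; one must rule out that $a_i$ has a nonzero constant (in $x_1,\ldots,x_{n-1}$) term, i.e. a pure $x_n$-power term of \emph{lower} order than its initial term. Here I would reuse the argument of Lemma~\ref{lemma:secondgen}: apply $\Delta_{x_n}^{(t)}$ to extract any such spurious pure-power-of-$x_n$ component; if nonzero it would be a $G$-invariant with initial term a pure power of $x_n$ and degree $<\deg a_{i_1}$, contradicting the minimality defining $i_1$ — hence no such term exists and $a_i\in(x_1,\ldots,x_{n-1})S$. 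Making this extraction argument uniform across all $i\neq i_1$, and handling the case $\deg a_i > \deg a_{i_1}$ (where one must be careful that the $x_n$-exponents are bounded appropriately), is the technical heart of the proof.
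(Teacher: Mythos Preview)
Your proposal has two genuine gaps, and you have misidentified where the difficulty lies.

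First, the ``main obstacle'' you name is not one. The generators $a_i$ are homogeneous, and in the degree-lex order with $x_1<\cdots<x_n$ the monomial $x_n^{\deg a_i}$ is the \emph{largest} monomial of degree $\deg a_i$. Hence $\In_>(a_i)\neq x_n^{\deg a_i}$ already forces that $x_n^{\deg a_i}$ is not a term of $a_i$, i.e.\ $a_i\in (x_1,\ldots,x_{n-1})S=W^\perp S$. There is no lower-order pure-$x_n$ term to worry about, and no need for the $\Delta_{x_n}^{(t)}$ extraction you sketch. So the inclusion $\Lambda S\subseteq \hilbertIdeal_{G,S,W}$ is immediate.

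Second, your argument for $\hilbertIdeal_{G',S,W}=\hilbertIdeal_{G,S,W}$ does not work. The fact $S'\cap A=S'\cap R$ (which is correct, via Lemma~\ref{lemma:sigmaFixes}) does not yield equality of the relative Hilbert ideals: the contraction $\hilbertIdeal_{G',S,W}\cap S'$ is an $S'$-ideal and is \emph{not} the same object as the subring $S'\cap A$. An element of $(W^\perp S)\cap A$ can involve $x_n$ and need not be $\sigma$-fixed, so it is not automatically in $R$.

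Third, your dimension/Cohen--Macaulay comparison for the reverse containment $\hilbertIdeal_{G,S,W}\subseteq \Lambda S$ is not valid: a containment $J\subseteq I$ with $\dim S/J=\dim S/I$ does not force $J=I$ (take $(x^2)\subseteq(x)$ in $\Bbbk[x,y]$). Corollary~\ref{corollary:ass} gives no multiplicity or length information that would close this gap.

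The paper's argument avoids all of this by a single sandwich. Write $\Lambda=\{x_1,a_2,\ldots,a_{i_0-1},a_{i_0+1},\ldots,a_m\}$. As noted above, $\Lambda\subseteq W^\perp S\cap R$, so $\Lambda S\subseteq \hilbertIdeal_{G,S,W}\subseteq \hilbertIdeal_{G',S,W}$. For the remaining inclusion $\hilbertIdeal_{G',S,W}\subseteq\Lambda S$, take any homogeneous $a\in (W^\perp S)\cap A$ and write $a=\Phi(x_1,a_2,\ldots,a_m)$ for a polynomial $\Phi$ in variables $\xi_1,\ldots,\xi_m$. Every monomial of $\Phi$ involving some $\xi_i$ with $i\neq i_0$ evaluates into $\Lambda S\subseteq W^\perp S$; only a pure power $\xi_{i_0}^k$ could contribute a term $x_n^{k\deg a_{i_0}}$ outside $W^\perp S$. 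Since $a\in W^\perp S$, no such pure power occurs, and $a\in\Lambda S$. This is the step your proposal is missing: exploiting that $\{x_1,a_2,\ldots,a_m\}$ generates $A$ as an \emph{algebra}, not merely that the $a_i$ lie in certain ideals.
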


\begin{proof}
Write $\Lambda  = \{ x_1,a_2, \ldots,a_{i_0-1}, a_{i_0+1}, \ldots,a_m\}$.
Let $a \in W^\perp S \cap A$ be a homogeneous element. Write
$a = \Phi(x_1, a_2, \ldots, a_m )$
where $\Phi$ is a polynomial in $m$ variables
$\xi_1, \ldots, \xi_m$.
Since
$\Lambda \subseteq W^\perp S$, it follows that no term of
$\Phi(\xi_1, \ldots, \xi_m)$
is a power of $\xi_{i_0 }$.
Hence $a \in \Lambda S$, and, therefore,
$\hilbertIdeal_{G', S, W} \subseteq \Lambda S$.
Moreover, $\Lambda \subseteq W^\perp S \cap
R$, so we get
\[
\hilbertIdeal_{G', S, W} \subseteq \Lambda S
\subseteq
\hilbertIdeal_{G, S, W}
\subseteq
\hilbertIdeal_{G', S, W},
\]
proving the lemma.
\end{proof}

As an application, we prove a partial converse to
Proposition~\ref{proposition:allSplit}.

\begin{proposition}
\label{proposition:RpolyThenApoly}
Adopt the notation of Lemma~\ref{lemma:algGens}.
Suppose that $i_0 = i_1$ and that $R$ is a polynomial ring.
Then $A$ is a polynomial ring.
\end{proposition}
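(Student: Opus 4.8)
The plan is to show that the Hilbert ideal $\hilbertIdeal_{G',S}$ is a complete intersection and that $A=S^{G'}$ is a direct summand of $S$; since (as recalled in the introduction) an invariant ring that is a direct summand of the ambient polynomial ring and whose Hilbert ideal is a complete intersection is itself a polynomial ring, this proves the proposition.

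First I would set $W=\Bbbk\langle v_n\rangle$. Because $G$ acts on $\{v_1,\dots,v_n\}$ by lower-triangular unipotent matrices, $v_n\in V^G\subseteq V^{G'}$, so $W$ is a subspace of codimension $n-1$ in $V$ lying in both $V^G$ and $V^{G'}$. As $R$ is a polynomial ring, $\hilbertIdeal_{G,S}$ is generated by the $n$ polynomial generators of $R$; having height $n$ in the Cohen--Macaulay ring $S$, it is a complete intersection and $\mu(\hilbertIdeal_{G,S})=n$. Then Proposition~\ref{proposition:Broer22}\eqref{proposition:Broer22:mingens} gives $\mu(\hilbertIdeal_{G,S,W})=n-1$, and since $\height\hilbertIdeal_{G,S,W}=n-1$ by Corollary~\ref{corollary:ass} while $S$ is Cohen--Macaulay, $\hilbertIdeal_{G,S,W}$ is a complete intersection. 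Here the hypothesis $i_0=i_1$ enters: by Lemma~\ref{lemma:a3notinR} we have $\hilbertIdeal_{G',S,W}=\hilbertIdeal_{G,S,W}$, so $\hilbertIdeal_{G',S,W}$ is a complete intersection, and applying Proposition~\ref{proposition:Broer22}\eqref{proposition:Broer22:mingens} to $G'$ now shows that $\hilbertIdeal_{G',S}$ is a complete intersection.

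The remaining, and main, obstacle is to show that $A$ is a direct summand of $S$. By Lemma~\ref{lemma:dirSummandLeastDeg} and the computation in the proof of Proposition~\ref{proposition:allSplit}, $A\cong R[T]/(T^p-r_1T-r_2)$ for some homogeneous $r_1,r_2\in R$, so $\{1,a_0,\dots,a_0^{p-1}\}$ is an $R$-basis of $A$ and in particular $A$ is Cohen--Macaulay; moreover $T^p-r_1T-r_2$ is monic, so $A$ is a Frobenius $R$-algebra and hence $\Hom_A(S,A)\cong\Hom_R(S,R)$ as $A$-modules. Since $R$ is a polynomial ring it is a direct summand of $S$ (Proposition~\ref{proposition:summandPolyCM}); transporting the corresponding $R$-linear retraction $S\to R$ across this isomorphism produces a nonzero $A$-linear map $\phi\colon S\to A$ whose restriction to $A$ is multiplication by a nonzero homogeneous $\theta\in A$. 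Picking homogeneous invariants $g_1,\dots,g_n\in(S^{G'})_+$ that generate $\hilbertIdeal_{G',S}$ — these form a regular sequence on $S$, hence on the Cohen--Macaulay ring $A$, so $B:=\Bbbk[g_1,\dots,g_n]$ is a polynomial subring of $A$ over which $A$ is finite and torsion-free — and applying $\phi$ to an expression $f=\sum_i a_ig_i$ of an arbitrary $f\in(S^{G'})_+$, one obtains $\theta f=\sum_i g_i\,\phi(a_i)\in(g_1,\dots,g_n)A$. The goal is to deduce from $\theta\,(S^{G'})_+\subseteq(g_1,\dots,g_n)A$ that $A=B$, i.e.\ that $A$ is a polynomial ring; this descent from ideal-membership in $S$ to subalgebra-membership is the delicate point, and it has to be carried out with care because the Frobenius isomorphism shifts degrees by $\deg(a_0^{p-1})$, so $\phi$ is not degree-preserving and the argument must be organized around $\theta$ and tracked degree by degree.

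An alternative way to finish, avoiding direct summands, is to use the hypersurface presentation $A\cong\Bbbk[y_1,\dots,y_n,T]/(T^p-r_1T-r_2)$, where $y_1,\dots,y_n$ are the generators of $R$: the ring on the right is regular — hence a polynomial ring — exactly when $r_2\notin(R_+)^2$. Since $i_0=i_1$ forces $\In_>(a_0)=x_n^{\deg a_0}$, one computes that the initial term of $r_2=a_0^p-r_1a_0$ with respect to the degree-lexicographic order is $x_n^{p\deg a_0}$, and then argues that an invariant of this (minimal) degree whose initial term is a power of $x_n$ cannot lie in $(R_+)^2$; this gives $r_2\notin(R_+)^2$ and hence that $A$ is a polynomial ring.
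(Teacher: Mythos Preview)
Your first paragraph is correct and already contains the heart of the matter: from $R$ polynomial you get $\mu(\hilbertIdeal_{G,S,W})=n-1$, from $i_0=i_1$ and Lemma~\ref{lemma:a3notinR} you get $\hilbertIdeal_{G',S,W}=\hilbertIdeal_{G,S,W}$, and hence $\hilbertIdeal_{G',S}$ is a complete intersection. But the proposition does not follow from this alone, and neither of your two proposed completions is finished.

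The second paragraph is not a workable route. Knowing that $A$ is free over $R$ and that $R$ is a direct summand of $S$ does produce a nonzero $A$-linear $\phi\colon S\to A$, but there is no reason the resulting $\theta\in A$ should be a unit, and your attempt to pass from $\theta\,(S^{G'})_+\subseteq(g_1,\ldots,g_n)A$ to $A=B$ is not an argument; you have conflated an ideal containment with a subalgebra equality. In effect you would need $A$ to be Cohen--Macaulay to run a direct-summand argument, and that is essentially what you are trying to prove.

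Your third paragraph has the right shape but is missing the one substantive lemma it needs. Both the claim $\In_>(r_2)=x_n^{p\deg a_{i_0}}$ and the claim that such an element cannot lie in $(R_+)^2$ rest on the following fact, which you have not stated or proved:
\[
\min\{\deg r \mid r\in R\setminus W^\perp S\}=p\deg a_{i_0}.
\]
This is the crucial step; it holds because any $r\in R$ with $\In_>(r)$ a power of $x_n$ must (by applying suitable $\Delta_n^{(t)}$) have degree a $p$-power at least $p\deg a_{i_1}=p\deg a_{i_0}$, while $\prod_\sigma a_{i_0}$ realizes equality. Once you have it, your $r_2\notin(R_+)^2$ argument can be completed: any product in $(R_+)^2$ of total degree $p\deg a_{i_0}$ must have a factor in $W^\perp S$, so lies in $(x_1,\ldots,x_{n-1})S$, contradicting $\In_>(r_2)=x_n^{p\deg a_{i_0}}$.

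The paper takes a more direct route that bypasses direct summands and the hypersurface presentation entirely. Using the same equality $\hilbertIdeal_{G',S,W}=\hilbertIdeal_{G,S,W}$ together with Proposition~\ref{proposition:Broer22}\eqref{proposition:Broer22:hilb} applied to both $G$ and $G'$, one has $\hilbertIdeal_{G,S}=\hilbertIdeal_{G,S,W}+(r_0)$ and $\hilbertIdeal_{G',S}=\hilbertIdeal_{G,S,W}+(a_{i_0})$, where $r_0$ and $a_{i_0}$ are nonzerodivisors on $S/\hilbertIdeal_{G,S,W}$. The degree comparison above gives $\deg r_0=p\deg a_{i_0}$, so
\[
|G|=\rank_\Bbbk S/\hilbertIdeal_{G,S}
=\deg r_0\cdot\deg(S/\hilbertIdeal_{G,S,W})
=p\,\rank_\Bbbk S/\hilbertIdeal_{G',S}\ge p\,|G'|=|G|,
\]
forcing $\rank_\Bbbk S/\hilbertIdeal_{G',S}=|G'|$; then $S$ is free over $A$ by Nakayama and $A$ is a polynomial ring by Discussion~\ref{discussionbox:free}. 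So the single missing idea in your write-up is precisely this degree comparison; with it, either your third approach or the paper's rank computation finishes the proof, and the detour through direct summands is unnecessary.
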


\begin{proof}
Note that $R$ is a polynomial ring if and only if $S$ is a free $R$-module.
The `only if' direction follows from the Auslander-Buchsbaum formula;
see Discussion~\ref{discussionbox:free} for the `if' direction.
$R$ is a polynomial ring if and only if
$\dim_\Bbbk S/\hilbertIdeal_{G,S }  = |G|$.
Similarly $A$ is a polynomial ring if and only if
$\dim_\Bbbk S/\hilbertIdeal_{G',S }  = |G'|$.
Let $W = \Bbbk \langle v_n \rangle$.

\begin{steps}

\proofstep
$\min \{ \deg r \mid r \in R \minus W^\perp S\}=p \deg a_{i_0}$.
Proof: Let $r \in R \minus W^\perp S$ be of the smallest degree.
Then $\In_>(r ) = x_n^{\deg r }$.
By considering $\Delta_{n}^{(t ) } r, t \geq 1$,
we see that $\deg r$ is a power of $p$.
By the definition of $i_1$, $\deg r \geq \deg a_{i_1}$.
Since $i_0 = i_1$, $r \neq a_{i_1}$.
Hence $\deg r > \deg a_{i_1}$.
Since $\deg a_{i_1 }$ is a power of $p$, it follows that
$\deg r \geq p \deg a_{i_1} = p \deg a_{i_0}$.
On the other hand $\prod_\sigma a_{i_0} \in R \minus W^\perp S$, so
$\deg r \leq p \deg a_{i_0}$.

\proofstep
Let $r_0 \in R \minus W^\perp S$ be such that $\deg r_0 = p \deg a_{i_0}$.
Since
$\Ass_S (S/\hilbertIdeal_{G,S,W}) = W^\perp S$
(Corollary~\ref{corollary:ass})
$a_{i_0}$ and $r_0$ are non-zero-divisors on
$S/\hilbertIdeal_{G,S,W}$.

\proofstep
Note that $R/(W^\perp S \cap R )$ is a $1$-dimensional polynomial ring
(Proposition~\ref{proposition:Broer22}).
Hence $R/(W^\perp S \cap R ) \simeq \Bbbk[r_0]$.
Similarly
$A/(W^\perp S \cap A ) \simeq \Bbbk[a_{i_0}]$.
Hence,
$\hilbertIdeal_{G,S} = \hilbertIdeal_{G,S,W } + (r_0 )$
and
$\hilbertIdeal_{G',S} = \hilbertIdeal_{G',S,W } + (a_{i_0})$.
\proofstep
For a homogeneous $S$-ideal $J$, write $\deg (S/J)$ for the \define{degree}
(or \define{multiplicity}) of $S/J$.
Then, by Lemma~\ref{lemma:a3notinR},
\begin{align*}
|G| = \dim_\Bbbk S/\hilbertIdeal_{G,S }
& = \deg r_0 \deg(S/\hilbertIdeal_{G,S,W})
\\
& = p \deg (a_{i_0}) \deg(S/\hilbertIdeal_{G',S,W})
= p \dim_\Bbbk S/\hilbertIdeal_{G',S }
\geq p |G'| = |G|,
\end{align*}
so $\dim_\Bbbk S/\hilbertIdeal_{G',S } = |G'|$.
In other words,
\[
\dim_\Bbbk S \otimes_A A/A_+ =
\dim_{\Frac(A)} S \otimes_A \Frac(A)
\]
so, by the Nakayama lemma, $S$ is a free $A$-module.
By Discussion~\ref{discussionbox:free}, $A$ is a polynomial ring.
\end{steps}
\end{proof}

\section{Hilbert ideals in dimension $4$}
\label{section:hilb4}
In the following we assume $\dim_\Bbbk V=4$ and we adopt the notation of
Lemma \ref{lemma:algGens}.
Let $W=\Bbbk\langle v_4\rangle\subseteq V^G$.

\begin{lemma}\label{lemma:a3inR}
Suppose that $R$ is a direct summand of $S$
and that $\beta_\sigma > 1$.
If $a_3\in R$ and $\hilbertIdeal_{G', S, W}$ is a complete
intersection, then
\[
\hilbertIdeal_{G', S, W} =
\hilbertIdeal_{G, S, W} = (x_1, a_2, a_3)S.
\]
\end{lemma}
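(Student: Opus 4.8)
The plan is to establish the chain of inclusions
\[
(x_1,a_2,a_3)S \;\subseteq\; \hilbertIdeal_{G,S,W} \;\subseteq\; \hilbertIdeal_{G',S,W} \;\subseteq\; (x_1,a_2,a_3)S,
\]
after which all four ideals coincide and the lemma is proved. The middle inclusion is free: $R\subseteq A$ gives $(W^\perp S)\cap R\subseteq(W^\perp S)\cap A$, hence $\hilbertIdeal_{G,S,W}\subseteq\hilbertIdeal_{G',S,W}$.

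For the first inclusion I would show $x_1,a_2,a_3\in(W^\perp S)\cap R$. The linear form $x_1$ is $G$-fixed and vanishes on $W=\Bbbk\langle v_4\rangle$, so $x_1\in(W^\perp S)\cap R$. By Lemma~\ref{lemma:secondgen} — this is the only place the hypothesis $\beta_\sigma>1$ is used — we have $a_2\in\Bbbk[x_1,x_2,x_3]^{G}$, and since $\Bbbk[x_1,x_2,x_3]_{+}\subseteq W^\perp S$ this gives $a_2\in(W^\perp S)\cap R$. Since $a_3\in R$ by hypothesis, it remains only to see $a_3\in W^\perp S$, i.e.\ that $\In_>(a_3)$ is not a power of $x_4$, i.e.\ $i_1\neq 3$. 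I expect this to be the delicate point. Note first that it is unavoidable: if $i_1=3$ then $a_3=a_{i_1}$ has a nonzero $x_4^{\deg a_3}$-term, hence does not lie in $W^\perp S\supseteq\hilbertIdeal_{G,S,W}$, and the asserted equality already fails. To rule $i_1=3$ out I would bring in the hypothesis that $R$ is a direct summand of $S$: then every minimal homogeneous invariant generating set of $\hilbertIdeal_{G,S}$ generates $R$ as a $\Bbbk$-algebra (see the introduction), so $R$, of Krull dimension $4$, needs at least four algebra generators, whence $\mu(\hilbertIdeal_{G,S,W})=\mu(\hilbertIdeal_{G,S})-1\geq 3$ by Proposition~\ref{proposition:Broer22}\eqref{proposition:Broer22:mingens}; combined with the description of $\hilbertIdeal_{G',S,W}$ obtained below, with $\hilbertIdeal_{G,S,W}\subseteq\hilbertIdeal_{G',S,W}$, and with the relation $a_{i_0}^p=r_1 a_{i_0}+r_2$ ($r_1,r_2\in R$) arising from $A=R[a_{i_0}]$, this should force a contradiction.

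For the last inclusion I would copy the proof of Lemma~\ref{lemma:a3notinR}, using $a_{i_1}$ where that proof uses $a_{i_0}$: reduction modulo $W^\perp S$ identifies $S/W^\perp S$ with $\Bbbk[x_4]$, and among the algebra generators $x_1,a_2,\dots,a_m$ of $A$ only $a_{i_1}$ has nonzero image there, so any homogeneous element of $(W^\perp S)\cap A$, written as a polynomial in these generators, involves no pure power of $\xi_{i_1}$ and therefore lies in the $S$-ideal generated by $\Lambda:=\{x_1,a_2,\dots,a_m\}\setminus\{a_{i_1}\}$; conversely $\Lambda\subseteq(W^\perp S)\cap A$, so $\hilbertIdeal_{G',S,W}=\Lambda S$. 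Now I use the complete-intersection hypothesis: $\hilbertIdeal_{G',S,W}$ has height $n-1=3$ by Corollary~\ref{corollary:ass}, so it is minimally generated by exactly three elements. The elements of $\Lambda$ are listed with strictly increasing initial monomials (Lemma~\ref{lemma:algGens}\eqref{lemma:algGens:order}), hence with non-decreasing degrees, so a minimal generating set of $\hilbertIdeal_{G',S,W}$ can be extracted greedily starting from the front; one checks that $a_2\notin(x_1)S$ and $a_3\notin(x_1,a_2)S$ — for instance if $a_2=x_1b$ with $b\in S$, then $b$ is $G'$-invariant (by unique factorisation in $S$ and $G'$-invariance of $a_2$ and $x_1$), so $a_2$ is a product of two positive-degree elements of $A$, contradicting its minimality as an algebra generator of $A$; the argument for $a_3$ is analogous, with $a_3\in W^\perp S$ keeping the relevant cofactors in $A$ — so the minimal generating set produced is exactly $\{x_1,a_2,a_3\}$ and hence $a_j\in(x_1,a_2,a_3)S$ for every $j\geq 4$ with $j\neq i_1$. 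Thus $\hilbertIdeal_{G',S,W}=\Lambda S=(x_1,a_2,a_3)S$, which closes the chain. The crux, as noted, is the step $i_1\neq 3$; everything else is a routine synthesis of Lemma~\ref{lemma:secondgen}, the reduction-modulo-$W^\perp S$ technique of Lemma~\ref{lemma:a3notinR}, Corollary~\ref{corollary:ass}, and Proposition~\ref{proposition:Broer22}.
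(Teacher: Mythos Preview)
Your chain-of-inclusions strategy is exactly the paper's, and your reduction to showing that $x_1,a_2,a_3$ are minimal generators of the height-$3$ complete intersection $\hilbertIdeal_{G',S,W}$ is the right move. The genuine gap is your argument for $a_3\notin(x_1,a_2)S$. The unique-factorisation trick works for $a_2\notin x_1S$ because $(x_1)$ is principal: $a_2/x_1$ lies in $\Frac(A)\cap S=A$ (using that $A=S^{G'}$ is normal), contradicting minimality of $a_2$. But for the non-principal ideal $(x_1,a_2)$ this breaks down --- from $a_3=x_1b_1+a_2b_2$ with $b_i\in S$ there is no mechanism to force $b_i\in A$, and your phrase ``$a_3\in W^\perp S$ keeping the relevant cofactors in $A$'' does not supply one. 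This is precisely where the paper spends the direct-summand hypothesis: since $R$ is a direct summand of $S$ and $x_1,a_2\in R$, one has $(x_1,a_2)S\cap R=(x_1,a_2)R$; thus $a_3\in(x_1,a_2)S$ with $a_3\in R$ gives $a_3\in(x_1,a_2)R\subseteq(x_1,a_2)A$, say $a_3=a'x_1+a''a_2$ with $a',a''\in A$, and degree considerations force $a',a''\in\Bbbk[x_1,a_2]$, contradicting minimality of $a_3$ as an algebra generator of $A$.

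On your ``delicate point'': you are right that the argument requires $a_3\in W^\perp S$ (equivalently $i_1\neq3$), and the paper's own proof tacitly uses this when it declares $x_1,a_2,a_3$ to be part of a minimal generating set of $\hilbertIdeal_{G',S,W}$ without checking that $a_3$ lies in that ideal. Your sketch for ruling out $i_1=3$ is too thin to evaluate: the ingredients you list --- $\mu(\hilbertIdeal_{G,S,W})\geq3$, the inclusion $\hilbertIdeal_{G,S,W}\subseteq\hilbertIdeal_{G',S,W}$, and the relation $a_{i_0}^p=r_1a_{i_0}+r_2$ --- do not visibly combine into a contradiction, so as written this remains a gap in the proposal (and, to be fair, an unaddressed point in the paper as well).
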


\begin{proof}
It suffices to show that
\begin{equation}
\label{equation:a3inRHilbIdeal}
\hilbertIdeal_{G', S, W} = (x_1, a_2, a_3)S,
\end{equation}
for, then, it would follow that
\[
\hilbertIdeal_{G', S, W}=(x_1,a_2,a_3)S\subseteq
\hilbertIdeal_{G, S, W}\subseteq \hilbertIdeal_{G', S, W},
\]
proving the proposition.
(Note that $a_2 \in R$ by Lemma~\ref{lemma:secondgen}.)

To prove~\eqref{equation:a3inRHilbIdeal}, in turn, it suffices to show that
$x_1, a_2, a_3$ are part of a minimal generating set of
$\hilbertIdeal_{G', S, W}$, i.e., none of them belong to
$S_+ \hilbertIdeal_{G', S, W}$.
Note that $ \height \hilbertIdeal_{G', S, W} = 3$.
Since $\hilbertIdeal_{G', S, W}$ is a complete
intersection,~\eqref{equation:a3inRHilbIdeal} would then follow.

It is immediate that $x_1\notin S_+\hilbertIdeal_{G', S, W}$.
Suppose that $a_2 \in S_+\hilbertIdeal_{G', S, W}$.
Since $a_2 \not \in x_1S$, for $a_2$ to belong to
$S_+\hilbertIdeal_{G', S, W}$, it is necessary that there is a
homogeneous element $b$ of $S^{G'}$ such that $b \not \in \Bbbk[x_1]$
and $\deg b < \deg a_2$. This contradicts the choice of $a_2$.
Hence $a_2 \not \in S_+\hilbertIdeal_{G', S, W}$.

We now show that $a_3 \notin (x_1,a_2)S$.
Since $R$ is a direct summand of $S$ and $a_3\in R$, if $a_3\in (x_1,a_2)S$
then $a_3\in (x_1,a_2)R$.
This implies $a_3\in (x_1,a_2)A$.
Write $a_3=a'x_1+a''a_2$ with $a', a''\in A$.
By degree considerations, we have that $a', a''\in \Bbbk[x_1,a_2]$ which is
a contradiction of the fact that $\{x_1, a_2, a_3\}$ is a part of minimal
generating set of $A$ as a $\Bbbk$-algebra.
Hence $a_3 \notin (x_1,a_2)S$.

Now suppose that $a_3 \in S_+\hilbertIdeal_{G', S, W}$.
Since $a_3 \notin (x_1,a_2)S$,
it is necessary that there is a homogeneous element $b$ of
$S^{G'}$ such that $b \not \in (x_1, a_2 )S$ and $\deg b < \deg a_3$.
Note that $b \not \in \Bbbk[x_1, a_2 ]$.
This contradicts the choice of $a_3$.
Hence $a_3 \not \in S_+\hilbertIdeal_{G', S, W}$.
\end{proof}

\begin{lemma}\label{lemma:betaGprime}
  Suppose $\Bbbk=\mathbb{F}_p$. Then $\beta_{G'}\leq 2$. In particular $\hilbertIdeal_{G', S, W}$ is a complete intersection.
\end{lemma}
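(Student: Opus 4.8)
The plan is to prove $\beta_{G'}\leq 2$ first and then read off the complete-intersection assertion from Section~\ref{section:dimTwoThree}. Since every transvection of $G'$ is a transvection of $G$ we always have $\beta_{G'}\leq\beta_G$, and $\beta_g\leq 3$ for every transvection $g$ (because $gv_4=v_4$ by Lemma~\ref{lemma:triang}); so we may assume $\beta_G=3$. The composition series of Proposition~\ref{proposition:ourcompositionseries} refines the filtration by the groups $\langle g\in\calP\mid\beta_g\leq l\rangle$, $1\leq l\leq\beta_G$, so its penultimate term $G'=G_{k-1}$ contains $H:=\langle g\in\calP\mid\beta_g\leq 2\rangle$ and has index $p$ in $G$. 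Consequently $\sigma$, being a transvection in $G\minus G'\subseteq G\minus H$, has $\beta_\sigma>2$, i.e.\ $\beta_\sigma=3$.

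The structural input is the following. If $g$ is a transvection with $\beta_g=3$, write $g=\mathrm{id}+w\ell$ with $\ell(w)=0$ and $w,\ell\neq 0$; then $\ell(v_3)w=gv_3-v_3\in\Bbbk v_4$ by Lemma~\ref{lemma:triang}, and $\ell(v_3)\neq 0$ (as $gv_3\neq v_3$), so $w\in\Bbbk v_4$. Absorbing the scalar, $g=\mathrm{id}+v_4\ell_g$ with $\ell_g\in\Bbbk\langle x_1,x_2,x_3\rangle$ whose $x_3$-coefficient is nonzero, and conversely any such element of $G$ is a transvection with $\beta=3$. Since $\ell(v_4)=0$ for $\ell\in\Bbbk\langle x_1,x_2,x_3\rangle$, one has $(\mathrm{id}+v_4\ell)(\mathrm{id}+v_4\ell')=\mathrm{id}+v_4(\ell+\ell')$, so the elements of $G$ of this shape form an abelian subgroup $\calT$ of exponent $p$, which via $\mathrm{id}+v_4\ell\leftrightarrow\ell$ is identified with an $\mathbb{F}_p$-subspace $L$ of the three-dimensional space $\mathbb{F}_p\langle x_1,x_2,x_3\rangle$; this is where the hypothesis $\Bbbk=\mathbb{F}_p$ enters. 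Both $\sigma$ and any transvection $\tau\in G'$ with $\beta_\tau=3$ lie in $\calT$.

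The heart of the argument is then a short linear-algebra computation. Let $\chi\colon G\to\ints/p\ints$ be the homomorphism with kernel $G'$; restricted to $\calT$ it is an $\mathbb{F}_p$-linear functional on $L$, and it vanishes on $L\cap\mathbb{F}_p\langle x_1,x_2\rangle$ because those forms $\ell$ give transvections with $\beta\leq 2$, hence elements of $H\subseteq G'$. Suppose, for contradiction, that a transvection $\tau\in G'$ has $\beta_\tau=3$, and let $c_\tau,c_\sigma\in\mathbb{F}_p^{\times}$ be the $x_3$-coefficients of $\ell_\tau,\ell_\sigma$. Then $c_\sigma\ell_\tau-c_\tau\ell_\sigma\in L$ has no $x_3$-term, so it lies in $L\cap\mathbb{F}_p\langle x_1,x_2\rangle$, and applying $\chi$ gives $0=c_\sigma\chi(\tau)-c_\tau\chi(\sigma)=-c_\tau\chi(\sigma)$, whence $\chi(\sigma)=0$, i.e.\ $\sigma\in G'$ --- contradicting $\sigma\in G\minus G'$. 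Hence $G'$ has no transvection with $\beta=3$, i.e.\ $\beta_{G'}\leq 2$. I expect this ``linear relation pitted against $\chi$'' step to be the main obstacle: it is exactly where $\Bbbk=\mathbb{F}_p$ is used, since over a larger field the coefficients $c_\tau,c_\sigma$ need not lie in $\mathbb{F}_p$ and $c_\sigma\ell_\tau-c_\tau\ell_\sigma$ need not lie in $L$.

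Finally, $\beta_{G'}\leq 2$ forces every transvection generating $G'$ to fix $v_3$ and $v_4$, hence $v_3,v_4\in V^{G'}$ and $\rank_\Bbbk V^{G'}\geq 2=\rank_\Bbbk V-2$. By Theorem~\ref{theorem:largeRankVG} applied to $G'$, $\hilbertIdeal_{G',S}$ is a complete intersection. Since $W=\Bbbk\langle v_4\rangle\subseteq V^{G'}$, comparing the equation $\mu(\hilbertIdeal_{G',S})=\mu(\hilbertIdeal_{G',S,W})+(\rank_\Bbbk V-\codim_V W)$ of Proposition~\ref{proposition:Broer22}\eqref{proposition:Broer22:mingens} with $\height\hilbertIdeal_{G',S}=\rank_\Bbbk V$ and $\height\hilbertIdeal_{G',S,W}=\codim_V W$ (Corollary~\ref{corollary:ass}) shows that $\hilbertIdeal_{G',S,W}$ is a complete intersection if and only if $\hilbertIdeal_{G',S}$ is; hence $\hilbertIdeal_{G',S,W}$ is a complete intersection.
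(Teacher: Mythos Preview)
Your proof is correct and follows essentially the same route as the paper. The paper's argument is terser: having reduced to $\beta_\sigma=3$, it observes directly that for a suitable $k\in\{0,\ldots,p-1\}$ one has $\beta_{\sigma^k\tau}\leq 2$, hence $\sigma^k\tau\in H\subseteq G'$ and so $\sigma\in G'$, a contradiction. Your identification of the transvections with $\beta=3$ as $\mathrm{id}+v_4\ell$ and the passage to the additive group $L$ is exactly the structure underlying that step, and your use of the quotient character $\chi$ is just a clean repackaging of finding $k=-c_\tau/c_\sigma\in\mathbb{F}_p$. For the ``in particular'' clause, the paper appeals directly to Proposition~\ref{proposition:largeRankVGRel} (giving that $\hilbertIdeal_{G',S,V^{G'}}$ is a complete intersection), while you pass through Theorem~\ref{theorem:largeRankVG} and then use the $\mu$-formula of Proposition~\ref{proposition:Broer22}\eqref{proposition:Broer22:mingens} to descend to $W=\Bbbk\langle v_4\rangle$; both deductions are valid and amount to the same equivalence.
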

\begin{proof}
If $\beta_{\sigma}=2$, then by the construction of the filtration
$\beta_{\tau}\leq 2$ for all pseudoreflections $\tau\in G$.
Therefore assume $\beta_{\sigma}=3$.
By way of contradiction, if $x_3$ appears in $(\tau-1)x_4$, then for some
$k=0,\ldots, p-1$, we have $\beta_{\sigma^k\tau}\leq 2$.
By our choice of filtration, $\sigma^k\tau\in G'$, which is a
contradiction.
Hence the claim follows.
The second statement follows from Proposition
\ref{proposition:largeRankVGRel} since $\beta_{G}\leq 2$ implies
$\dim_\Bbbk V^{G}\geq 2=\dim_\Bbbk V-2$.
\end{proof}

\begin{lemma}
\label{lemma:ai0}
Adopt the notation from Lemma~\ref{lemma:algGens}.
Assume that $\beta_{G'} \leq 2$, $i_0=3$ (i.e., $a_3 \not \in R$),
and $\beta_\sigma = 3$.
Then
\begin{enumerate}
\item
\label{lemma:ai0:diff}
$\In_> ((\sigma -1) a_{3 }) = x_3^{\deg a_{3}}$;

\item
\label{lemma:ai0:noprod}
No term of $a_{3}$ is divisible by $x_3x_4$, i.e., coefficients of
$x_4^t$ in $a_{3}$ are in $\Bbbk[x_1,x_2]$ for all $t> 0$.

\item
\label{lemma:ai0:in}
$\In_> ( a_{3 }) = x_4^{\deg a_{3}}$, i.e., $i_1=3$.

\end{enumerate}
\end{lemma}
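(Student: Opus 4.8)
The plan is to constrain $a_3$ severely using the higher derivations, deduce (b) and a normal form for $a_3$, then read (a) and (c) off the $\sigma$-action; the one substantive point will be that $\deg_{x_4}a_3=\deg a_3$, which I would obtain by playing an upper bound on a subspace of $V^*$ coming from $G'$-invariance against a lower bound coming from a subspace polynomial.

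\emph{Setup.} By Lemma~\ref{lemma:sigmaFixes} I may take $\sigma$ to fix $x_1,x_2,x_3$, so $\ell:=(\sigma-1)x_4=c_1x_1+c_2x_2+c_3x_3$ with $c_3\ne0$ (since $\beta_\sigma=3$). Because $\beta_{G'}\le2$, both $\Bbbk\langle x_1,x_2,x_4\rangle$ and $\Bbbk\langle x_1,x_2,x_3\rangle$ are $G'$-stable, so $x_3,x_4$ are terminal variables for $G'$ and $\Delta_{x_3}^{(\bullet)},\Delta_{x_4}^{(\bullet)}$ carry $A$ into $A$. As $\sigma$ fixes $x_1,x_2,x_3$ pointwise, $\Bbbk[x_1,x_2,x_3]\cap A=\Bbbk[x_1,x_2,x_3]^{G'}=\Bbbk[x_1,x_2,x_3]^{G}\subseteq R$, so $a_3\notin R$ forces $a_3$ to involve $x_4$. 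Finally $\In_>(a_2)<\In_>(a_3)<\cdots$ makes the degrees $1\le\deg a_2\le\deg a_3\le\cdots$ non-decreasing; write $d:=\deg a_3$, which by (the proof of) Lemma~\ref{lemma:dirSummandLeastDeg} is the least degree in which $A$ and $R$ differ. Hence every element of $A$ of degree $<d$ lies in $\Bbbk[x_1,a_2]$, and $a_2\in\Bbbk[x_1,x_2,x_3]$ by Lemma~\ref{lemma:secondgen} (applicable as $\beta_\sigma=3>1$); in particular $\Bbbk[x_1,a_2]$ contains no element involving $x_4$.

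\emph{Normal form and (b).} Write $a_3=\sum_j x_4^{j}b_j$ with $b_j\in\Bbbk[x_1,x_2,x_3]$ homogeneous. For $j\ge1$ the element $\Delta_{x_4}^{(j)}a_3\in A$ has degree $<d$, hence lies in the $x_4$-free ring $\Bbbk[x_1,a_2]$; comparing $x_4$-coefficients gives $\binom{i}{j}b_i=0$ for all $i>j$ and $\Delta_{x_4}^{(j)}a_3=b_j$. The first fact forces every $i\ge2$ with $b_i\ne0$ to be a power of $p$, so $a_3=b_0+\sum_{e=0}^{e_2}x_4^{p^{e}}b_{p^{e}}$ with $p^{e_2}=\deg_{x_4}a_3\le d$ and $b_{p^{e_2}}\ne0$; the second gives $b_{p^{e}}=\Delta_{x_4}^{(p^{e})}a_3\in\Bbbk[x_1,a_2]$, so each $b_{p^{e}}$ is $G'$-invariant. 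Similarly, for $s\ge1$ the element $\Delta_{x_3}^{(s)}a_3=\sum_j x_4^{j}\Delta_{x_3}^{(s)}(b_j)$ has degree $<d$, hence lies in $\Bbbk[x_1,a_2]\subseteq\Bbbk[x_1,x_2,x_3]$ and is $x_4$-free, forcing $\Delta_{x_3}^{(s)}(b_j)=0$ for all $j\ge1$; thus $b_j\in\Bbbk[x_1,x_2]$ for $j\ge1$, which is exactly (b).

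\emph{The $\sigma$-action and the reduction.} Since $\sigma$ fixes each $b_j$ and $(\sigma-1)x_4^{p^{e}}=\ell^{p^{e}}$ by Frobenius, $(\sigma-1)a_3=\sum_{e=0}^{e_2}\ell^{p^{e}}b_{p^{e}}$ is $x_4$-free, hence lies in $\Bbbk[x_1,x_2,x_3]\cap A\subseteq R$, and is nonzero as $a_3\notin R$. Because $c_3\ne0$ the leading monomial of $\ell$ is $x_3$ and $b_{p^{e_2}}\in\Bbbk[x_1,x_2]$, so $\In_>((\sigma-1)a_3)=x_3^{p^{e_2}}\In_>(b_{p^{e_2}})$ and $\In_>(a_3)=x_4^{p^{e_2}}\In_>(b_{p^{e_2}})$; also $i_1\ge3$ because $\In_>(a_2)$ is $x_4$-free. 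Hence (a), (b) and (c) (so $i_1=3$) all follow once $b_{p^{e_2}}$ is shown to be a nonzero constant, i.e.\ once $p^{e_2}=d$.

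\emph{The crux: $p^{e_2}=d$, and the main obstacle.} Put $\tilde a:=a_3-b_0=\sum_e x_4^{p^{e}}b_{p^{e}}\in A$. For $g\in G'$ one has $(g-1)x_4\in\Bbbk\langle x_1,x_2\rangle$ and $g$ fixes each $b_{p^{e}}$, so applying $g-1$ and Frobenius to the $G'$-invariant $\tilde a$ gives
\[
\sum_{e=0}^{e_2}\bigl((g-1)x_4\bigr)^{p^{e}}\,b_{p^{e}}=0\qquad(g\in G').
\]
Thus the finite $\bbF_p$-subspace $M:=\{(g-1)x_4:g\in G'\}\subseteq\Bbbk\langle x_1,x_2\rangle$ lies in the zero set of the nonzero additive polynomial $P(z):=\sum_e b_{p^{e}}z^{p^{e}}$ over $\Bbbk(x_1,x_2)$, whose $z$-degree is $p^{e_2}$; hence $|M|\le p^{e_2}$. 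For the reverse inequality I would use the subspace polynomial $\psi_M(x_4):=\prod_{\mu\in M}(x_4+\mu)$: a routine check (using that $g$ maps $M$ bijectively onto $M$ and $(g-1)x_4\in M$) shows $\psi_M(x_4)\in A$, while $\sigma\psi_M(x_4)=\psi_M(x_4)+\psi_M(\ell)$ with $\psi_M(\ell)=\prod_{\mu\in M}(\ell+\mu)\ne0$ because $\ell\notin M$ (as $c_3\ne0$), so $\psi_M(x_4)\in A\minus R$; therefore $|M|=\deg\psi_M(x_4)\ge d$. Combining, $p^{e_2}\ge|M|\ge d\ge\deg_{x_4}a_3=p^{e_2}$, so $d=p^{e_2}$ and $b_{p^{e_2}}$ has degree $0$, which finishes (a), (b) and (c). The two preparatory steps are routine manipulations with the $\Delta^{(\bullet)}$; I expect the real work — and the only place the hypotheses $\beta_{G'}\le2$ and $\beta_\sigma=3$ are genuinely used — to be this last paragraph, in particular the clean verification that $\psi_M(x_4)$ is $G'$-invariant but not $G$-invariant.
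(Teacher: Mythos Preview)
Your proof of \eqref{lemma:ai0:noprod} is correct and essentially identical to the paper's: apply $\Delta_{x_3}^{(s)}$ for $s\ge1$ and use that the result lands in $\Bbbk[x_1,a_2]$, which is $x_4$-free.

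For \eqref{lemma:ai0:diff} and \eqref{lemma:ai0:in} your route is genuinely different from the paper's, and the ``crux'' paragraph has a real gap. You assert that $\tilde a=a_3-b_0$ is $G'$-invariant, but this is not justified: $b_0=a_3|_{x_4=0}$, and the substitution $x_4\mapsto 0$ is \emph{not} $G'$-equivariant, since $gx_4=x_4+\chi(g)$ with $\chi(g)\neq0$ in general. What the computation actually gives is
\[
P(\chi(g))\;=\;(g-1)\tilde a\;=\;-(g-1)b_0,
\]
an element of $\Bbbk[x_1,x_2]$ that you have no reason to believe is zero. Without $P(\chi(g))=0$ you lose the bound $|M|\le p^{e_2}$, and with it the equality $p^{e_2}=d$ on which your derivation of both \eqref{lemma:ai0:diff} and \eqref{lemma:ai0:in} rests. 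A secondary imprecision: $M=\{(g-1)x_4:g\in G'\}$ is the orbit $G'\!\cdot x_4$ shifted by $-x_4$, and since $\chi$ is only a $1$-cocycle (not a homomorphism, because $G'$ need not fix $\Bbbk\langle x_1,x_2\rangle$ pointwise), $M$ is not obviously an $\bbF_p$-subspace; your use of additivity in $\sigma\psi_M(x_4)=\psi_M(x_4)+\psi_M(\ell)$ therefore needs repair. (The lower bound $|M|\ge d$ survives: $\psi_M$ is the $G'$-orbit product of $x_4$, hence lies in $A$, and $\sigma\psi_M\neq\psi_M$ because $\ell\notin\Bbbk\langle x_1,x_2\rangle\supseteq M$ forces $\ell+M\neq M$.)

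The paper avoids this difficulty entirely by proving \eqref{lemma:ai0:diff} first, via differents: by \cite[Theorem~4(i)]{BroerHypersurfaces2006}, $((\sigma-1)a_3)^{p-1}$ generates the Dedekind different $\scrD_{A/R}$, so up to units $\Delta_{S/R}=\Delta_{S/A}\cdot((\sigma-1)a_3)^{p-1}$. Since the irreducible factors of $\Delta_{S/R}$ (resp.\ $\Delta_{S/A}$) are exactly the linear forms $l_\tau$ of transvections $\tau\in G$ (resp.\ $\tau\in G'$), the factors of $(\sigma-1)a_3$ are the $l_\tau$ with $\tau$ a transvection in $G\setminus G'$, all of which have $\In_> l_\tau=x_3$; hence $\In_>((\sigma-1)a_3)=x_3^{\deg a_3}$. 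Then \eqref{lemma:ai0:in} follows immediately from \eqref{lemma:ai0:diff} and \eqref{lemma:ai0:noprod}, with no need to control $|M|$ or $\deg_{x_4}a_3$ directly.
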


\begin{proof}
\eqref{lemma:ai0:diff}:
By~\cite[Theorem~4(i)]{BroerHypersurfaces2006},
$((\sigma -1) a_{3 })^{p-1}$ generates the Dedekind different ideal
$\calD_{A/R}$.
Fix generators $\Delta_{S/R}$ and $\Delta_{S/A}$ for the Dedekind
differents $\mathcal{D}_{S/R}$ and $\mathcal{D}_{S/A}$ respectively.
Then
\[
\Delta_{S/R} = \Delta_{S/A}  \cdot ((\sigma -1) a_{3 })^{p-1}
\]
up to multiplication by an element of $\Bbbk^\times$.

For each transvection $\tau \in G$, fix a linear form $l_\tau$
that defines its reflecting hyperplane; $l_\tau$ is defined uniquely up to
multiplication by elements of $\Bbbk^\times$.
Hence $\In_> l_\tau = x_3$ if and only if $\tau \in G \minus G'$.
Moreover, the irreducible factors of
$\Delta_{S/R}$ (respectively, $\Delta_{S/A}$) are precisely the $l_\tau$
with $\tau$ a transvection in $G$ (respectively, $G'$);
see~\cite[Proposition~3.10.3]{BensonInvBook1993}
or~\cite[Proposition~9, p.~17]{BroerDirectSummandProperty2005}.
Therefore the irreducible factors of $(\sigma -1) a_{3 }$ are precisely
the $l_\tau$ with $\tau$ a transvection in $G \minus G'$.
This proves~\eqref{lemma:ai0:diff}.

\eqref{lemma:ai0:noprod}:
Since $\beta_{G'}\leq 2$, $x_3$ is a terminal variable for $G'$.
Hence, for all $t>0$,
$\Delta_{x_3}^{(t)}(a_{3})$ belongs to $A$ and, hence, to $\Bbbk[x_1,
a_2]$, by degree considerations.
If $x_3x_4$ divides some term of $a_{3 }$, then there exists $t>0$ such
that $x_4$ divides a term of $\Delta_{x_3}^{(t)}(a)$.
This is a contradiction since $\{x, a_2 \} \subseteq \Bbbk[x_1, x_2, x_3]$.

\eqref{lemma:ai0:in}:
For all $a,b,c \in \naturals$ with $c \neq 0$,
If $\In_>(a_{3 } ) = x_1^ax_2^bx_4^c$, then
$\In_>((\sigma-1 )a_{3 } ) = x_1^ax_2^bx_3^c$.
Moreover for all $a,b,c \in \naturals$,
$(\sigma-1 )(x_1^ax_2^bx_3^c ) = 0$.
Hence the assertion follows from \eqref{lemma:ai0:diff} and
\eqref{lemma:ai0:noprod}.
\end{proof}

\begin{proof}[Proof of Theorem~\protect{\ref{theorem:SWdim4primefield}}]
We show that $\hilbertIdeal_{G,S}$ is a complete intersection.

\underline{Case I: $\beta_G \leq 2$}.
Then $\hilbertIdeal_{G,S,V^G}$ is a complete
intersection (Proposition~\ref{proposition:largeRankVGRel}).
This case does not use the hypothesis that $\Bbbk = \bbF_p$.

\underline{Case II: $\beta_G = \beta_\sigma = 3$}.
Let $W = \Bbbk\langle v_4 \rangle$.
Apply Lemma~\ref{lemma:betaGprime} to see that
$\hilbertIdeal_{G',S,W}$ is a complete intersection.
By Lemma~\ref{lemma:secondgen}, $i_0 \geq 3$.
If $i_0=3$, then Lemmas~\ref{lemma:betaGprime}, \ref{lemma:ai0}
and~\ref{lemma:a3notinR} imply that
$\hilbertIdeal_{G,S,W}$ is a complete intersection.
If $i_0 > 3$, then Lemma~\ref{lemma:a3inR} implies that
$\hilbertIdeal_{G,S,W}$ is a complete intersection.

Finally use
Proposition~\ref{proposition:Broer22}\eqref{proposition:Broer22:mingens}
to see that $\hilbertIdeal_{G,S}$ is a complete intersection.
\end{proof}

\section{Proof of Theorem~\protect{\ref{theorem:smallG}}}
\label{section:smallG}

The following lemma might be known, but we give a proof for the sake of
completeness.

\begin{lemma}
\label{lemma:psquare}
Let $G$ be a transvection group of order $p^2$.
Then $S^G$ is a polynomial ring.
\end{lemma}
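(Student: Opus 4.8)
The plan is to realize $G$ as generated by two commuting transvections and then to compute $S^G$ explicitly by iterated norm maps. Groups of order $p^2$ are abelian, and a transvection $g$ has order $p$ (since $(g-\mathrm{id})^2=0$ on $V^*$, the binomial expansion gives $g^p=\mathrm{id}$ in characteristic $p$), so $G$ is not cyclic and $G\cong(\ints/p\ints)^2$. I would take the composition series $0=G_0\subsetneq G_1\subsetneq G_2=G$ of Proposition~\ref{proposition:ourcompositionseries}: then $G_1=\langle\tau\rangle$ with $\tau$ a transvection and $G/G_1=\langle\bar\sigma\rangle$ with $\sigma\in G\minus G_1$ a transvection, and, $G$ being abelian, $\sigma\tau=\tau\sigma$ and $G=\langle\sigma,\tau\rangle$. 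Put $A=S^{G_1}$. A single transvection $\tau$ fixes a hyperplane of $V^*$ pointwise and moves one further linear form $\xi$ to $\xi+\phi_\tau$ with $\phi_\tau\in(V^*)^\tau$; hence, for a basis $y_1,\dots,y_{n-1}$ of $(V^*)^\tau$, the polynomial ring $\Bbbk[y_1,\dots,y_{n-1},\xi^p-\phi_\tau^{\,p-1}\xi]$ lies in $A$, is integrally closed, and has the same fraction field as $A$, so it is all of $A=S^{G_1}$. It then suffices to show that $S^G=A^{\langle\bar\sigma\rangle}$ is a polynomial ring.

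By Lemma~\ref{lemma:sigmaFixes} we may assume $\sigma x_i=x_i$ for $i\neq n$ and $\sigma x_n=x_n+\phi_\sigma$ with $0\neq\phi_\sigma\in V'':=\Bbbk\langle x_1,\dots,x_{n-1}\rangle$. Commutativity forces $\tau$ to stabilize $(V^*)^\sigma=V''$, and $(\tau-\mathrm{id})|_{V''}$ has rank at most one, so $\tau|_{V''}$ is either the identity or a transvection of $V''$; I would treat these two cases separately. If $\tau|_{V''}=\mathrm{id}$, then $\tau$ fixes $x_1,\dots,x_{n-1}$ and $\psi:=\tau x_n-x_n$ is a nonzero element of $V''$, so $A=\Bbbk[x_1,\dots,x_{n-1},N]$ with $N=x_n^p-\psi^{p-1}x_n$; since $\sigma$ fixes $x_1,\dots,x_{n-1}$, hence $\psi$, one gets $\sigma N=N+w$ with $w=\phi_\sigma^p-\psi^{p-1}\phi_\sigma$. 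Were $w=0$, the forms $\phi_\sigma$ and $\psi$ would be $\bbF_p$-proportional, forcing $\tau\in\langle\sigma\rangle$ and $|G|=p$; so $w\neq 0$, $w\in\Bbbk[x_1,\dots,x_{n-1}]$ is $\sigma$-fixed, and $S^G=A^{\langle\bar\sigma\rangle}=\Bbbk[x_1,\dots,x_{n-1},N^p-w^{p-1}N]$, a polynomial ring.

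In the remaining case $\tau|_{V''}$ is a transvection; let $\phi_\tau\in V''$ be its direction and $\xi\in V''$ with $\tau\xi=\xi+\phi_\tau$. Writing $\tau x_n=x_n+d\phi_\tau$ and replacing $x_n$ by $x_n-d\xi$ (which leaves $\sigma x_n=x_n+\phi_\sigma$ unchanged, as $\sigma$ fixes $\xi$), we may assume $\tau x_n=x_n$; then $\sigma\tau=\tau\sigma$ applied to $x_n$ gives $\tau\phi_\sigma=\phi_\sigma$, so $\phi_\sigma\in(V^*)^\tau\cap V''$. Taking a basis $y_1=\phi_\tau,y_2,\dots,y_{n-2}$ of the $(n-2)$-dimensional space $(V^*)^\tau\cap V''$, together with $y_{n-1}=x_n$ and $y_n=\xi$, yields a basis of $V^*$ on which $\tau$ fixes $y_1,\dots,y_{n-1}$ with $\tau y_n=y_n+y_1$, while $\sigma$ fixes $y_1,\dots,y_{n-2},y_n$ with $\sigma y_{n-1}=y_{n-1}+\phi_\sigma$ and $\phi_\sigma\in\Bbbk\langle y_1,\dots,y_{n-2}\rangle$. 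Thus $A=\Bbbk[y_1,\dots,y_{n-1},M]$ with $M=y_n^p-y_1^{p-1}y_n$, the induced action of $\sigma$ on $A$ fixes $y_1,\dots,y_{n-2},M$ and adds the $\sigma$-fixed form $\phi_\sigma$ to $y_{n-1}$, and so $S^G=A^{\langle\bar\sigma\rangle}=\Bbbk[y_1,\dots,y_{n-2},M,y_{n-1}^p-\phi_\sigma^{p-1}y_{n-1}]$, again a polynomial ring.

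The main obstacle is the coordinate normalization placing us in one of these two cases; it relies only on the commutativity of $G$ (so that $\tau$ stabilizes $(V^*)^\sigma$) and on Lemma~\ref{lemma:sigmaFixes}, after which the invariant rings drop out of the norm construction. As an alternative, one could invoke Theorem~\ref{theorem:singloc}\eqref{theorem:singloc:some}: for $G\cong(\ints/p\ints)^2$ the forms defining the reflecting hyperplanes of the transvections of $G$ span a subspace of $V^*$ of dimension at most two, so $\codim_V V^G\le 2$; when this codimension is $1$ the local ring $(S^G)_\frakp$ at $\frakp=(V^G)^\perp S\cap S^G$ is a discrete valuation ring and Theorem~\ref{theorem:singloc}\eqref{theorem:singloc:some} applies directly, while the codimension-$2$ case reduces to the computation above.
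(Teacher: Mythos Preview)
Your proof is correct. Both you and the paper argue by normalizing coordinates and computing invariants explicitly, but the computation is organized differently. The paper splits on whether the two reflecting-hyperplane forms $l_\sigma,l_\tau$ are linearly dependent: in the dependent case it invokes Landweber--Stong (single reflecting hyperplane), and in the independent case it sets $l_\sigma=x_1$, $l_\tau=x_2$, reduces to $n\le4$, and writes down $S^G$ as a product of orbit products. You instead fix coordinates so that $\sigma$ fixes $x_1,\dots,x_{n-1}$, split on whether $\tau$ acts trivially on $(V^*)^\sigma$, and in both branches run an iterated norm $S \to S^{\langle\tau\rangle} \to S^G$ without shrinking $n$. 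Your two cases do not line up with the paper's (each of yours can land in either of theirs), and your argument is self-contained, avoiding the external Landweber--Stong citation at the price of a slightly longer computation. Two minor remarks: the appeals to Proposition~\ref{proposition:ourcompositionseries} and Lemma~\ref{lemma:sigmaFixes} are heavier than needed here---once $G$ is abelian, any two generating transvections will do, and for any transvection $\sigma$ one can trivially choose a basis with $\sigma x_i=x_i$ for $i<n$; and your closing ``alternative'' via Theorem~\ref{theorem:singloc} is only a sketch, since the codimension-two branch is not handled independently of the main computation.
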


\begin{proof}
Every group of order $p^2$ is abelian. Let $\sigma, \tau \in G$ be
transvections that generate $G$. Fix equations $l_\sigma$ and $l_\tau$
defining the reflecting hyperplanes of $\sigma$ and $\tau$ respectively.
Note that $\sigma \tau \sigma^{-1 }$ is a transvection, with reflecting
hyperplane defined by $\sigma l_\tau$; similarly,
$\tau \sigma \tau^{-1 }$ is a transvection, with reflecting
hyperplane defined by $\tau l_\sigma$.
Since $\sigma \tau = \tau \sigma$, we see that $\sigma l_\tau = l_\tau$; in
any case, $\tau l_\tau = l_\tau$. Therefore $l_\tau \in S^G$. Similarly
$l_\sigma \in S^G$.

If $l_\tau$ and $l_\sigma$ are linearly dependent over $\Bbbk$, then $G$
has exactly one reflecting hyperplane. Therefore
using~\cite[Theorem~5, Appendix]{LandweberStongDepth1987}
we see that $S^G$ is a polynomial ring.

Otherwise, we could assume that $l_\sigma = x_1$ and $l_\tau = x_2$.
After permuting the variables $x_3, \ldots, x_n$ and replacing $x_1$
by a suitable scalar multiple of it,
we may assume that $\sigma x_3 = x_3 + x_1$. For all $i \geq 4$, after
replacing $x_i$ by $x_i + \lambda x_3$ for a suitable $\lambda \in \Bbbk$,
we may assume that $\sigma x_i = x_i$.
Similarly, if there exists $i \geq 4$ such that $\tau x_i \neq x_i$, we may
assume that $\tau x_4 \neq x_4$ and that that $\tau x_i = x_i$ for all $i
\geq 5$.
Hence $S^G = \Bbbk[x_1, \ldots, x_4 ]^G \otimes_\Bbbk
\Bbbk[x_5, \ldots, x_n]$.
Therefore we may assume that $n=4$.
Further replacing $x_3$ with $x_3 + \lambda x_4$ for a suitable $\lambda$,
we may assume that $\tau x_3 = x_3$. Hence
$S^G$ contains $x_1, x_2, \prod_{i=0}^{p-1} \sigma^i x_3,
\prod_{i=0}^{p-1} \tau^i x_4$
which satisfy the statement (b) of
\cite[Theorem~3.9.4]{DerksenKemperComputationalInvariantThy2ed2015}.
Hence $S^G$ is a polynomial ring.
\end{proof}

\begin{proposition}
Assume that $|G | = p^3$. If $S^G$ is a direct summand of $S$, then $S^G$
is a polynomial ring.
\end{proposition}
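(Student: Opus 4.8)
The plan is to reduce the statement, via the composition series of Proposition~\ref{proposition:ourcompositionseries}, to a single ``ascent'' step along one quotient of order $p$, where everything needed is already available. First I would observe that we may assume $G$ is generated by transvections: if it is not, then $S^G$ is not a direct summand of $S$ by~\cite[Theorem~2(ii)]{BroerDirectSummandProperty2005}, so the hypothesis is vacuous and there is nothing to prove. Hence Proposition~\ref{proposition:ourcompositionseries} applies and furnishes a composition series $0 = G_0 \subsetneq G_1 \subsetneq G_2 \subsetneq G_3 = G$ in which every $G_l$ is a transvection group and $G_l/G_{l-1} \cong \ints/p\ints$; in particular $G_2$ is a normal subgroup of $G$ of index $p$. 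Since $G_2$ is a transvection group of order $p^2$, Lemma~\ref{lemma:psquare} shows that $S^{G_2}$ is a polynomial ring.

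Next I would perform the ascent from $G_2$ to $G$. Write $A = S^{G_2}$ and $R = S^G$, so that $R = A^{G/G_2}$ with $G/G_2 \cong \ints/p\ints$ acting on the polynomial ring $A$. Because $R \subseteq A \subseteq S$ and $R$ is a direct summand of $S$, Proposition~\ref{proposition:summandSub} gives that $R$ is a direct summand of $A$. Now $A$ is a polynomial ring, a group of order $p$ acts on it, and its ring of invariants $R$ is a direct summand; so~\cite[Theorem~4, p.~588]{BroerHypersurfaces2006} supplies a homogeneous $a \in A$ with $A = R[a]$ and $R[a] \simeq R[T]/(T^p - r_1 T - r_2)$ for suitable $r_1, r_2 \in R$, exactly as in the proof of Proposition~\ref{proposition:allSplit}. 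In particular $A$ is a free $R$-module of rank $p$, and then Discussion~\ref{discussionbox:free} forces $R = S^G$ to be a polynomial ring, which completes the argument.

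The reason for routing the proof through $G_2$, rather than invoking Proposition~\ref{proposition:allSplit} directly, is that we are only told that $S^G$ is a direct summand of $S$; we do not know a priori that $S^{G_1}$ or $S^{G_2}$ is a direct summand of $S$, so a stepwise ascent along the whole series is not immediately available. The role of the hypothesis $|G| = p^3$ is precisely that the series has length three, so Lemma~\ref{lemma:psquare} hands us $S^{G_2}$ polynomial for free and only the single ascent step $G_2 \subsetneq G$ remains --- and for that step the splitting $R \hookrightarrow A$ that is needed is delivered by Proposition~\ref{proposition:summandSub}. I do not anticipate a genuine obstacle; the one point to keep in mind is that~\cite[Theorem~4, p.~588]{BroerHypersurfaces2006} is being applied to the possibly non-linear action of $G/G_2$ on the polynomial ring $S^{G_2}$, which is legitimate and is how that result is used elsewhere in the paper.
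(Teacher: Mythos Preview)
Your proof is correct and follows essentially the same route as the paper: both use Lemma~\ref{lemma:psquare} to see that $S^{G_2}$ is a polynomial ring, then perform the Broer hypersurface step (which the paper packages as an application of Proposition~\ref{proposition:allSplit}) to pass from $G_2$ to $G$. Your remark that Proposition~\ref{proposition:allSplit} cannot be invoked directly is slightly off --- since $S^{G_1}$ and $S^{G_2}$ are polynomial rings they are automatically direct summands of $S$ by Proposition~\ref{proposition:summandPolyCM}, and the paper uses exactly this to feed Proposition~\ref{proposition:allSplit}.
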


\begin{proof}
Note that $G$ is a transvection group.
Let
$0  = G_0 \subsetneq G_1 \subsetneq G_2 \subsetneq G_3 = G$
be the filtration from Proposition \ref{proposition:ourcompositionseries}.
For $l=1,2$, $S^{G_l }$ is a direct summand of $S$
by~\cite[Theorem~5, Appendix]{LandweberStongDepth1987}
and
Lemma~\ref{lemma:psquare}, respectively.
Now apply Proposition~\ref{proposition:allSplit}.
\end{proof}

\section{Examples}
\label{section:examples}

The first example shows that $\hilbertIdeal_{G,S,W } \neq
\hilbertIdeal_{G,\Sym W^\perp }S$, in general.
This is related to Proposition~\ref{proposition:extended}.
The other two examples are
related to Proposition~\ref{proposition:ourcompositionseries}.

\begin{example}
	\label{example:nothGR}
	Let $\Bbbk=\mathbb{F}_9$ and choose $a\in \Bbbk\setminus \mathbb{F}_3$. Consider the action of $G=\langle \tau, \sigma\rangle$ on a $3$-dimensional vector space $V=\Bbbk\langle v_1,v_2,v_3\rangle$ such that the action on $V^*$ with dual basis $\{x, y, z\}$ is given by
	$\tau(x)=x, \tau(y)=y+ax, \tau(z)=z$ and $\sigma(x)=x, \sigma(y)=y+x, \sigma(z)=z+x$. Write $S=\Bbbk[x,y, z]$ and $W=\Bbbk\langle v_3\rangle\subset V^G$.
	We get
	$S^G=\Bbbk[x, (a-1)y^3-ayx^2+zx^2, z^3-x^2z]$, hence
	$\hilbertIdeal_{G,S,W }=(x, y^3)$.
Let $S':=\Sym W^\perp=\Bbbk[x,y]$. Then $(S')^G$ is
generated by $x$ and the $G$-orbit product of $y$, which is a
polynomial of degree $9$.
Hence $\hilbertIdeal_{G,S'}=(x, y^9) \neq
\hilbertIdeal_{G,S,W} \cap S'$.
\end{example}

\begin{example}\label{example:NakajimaFiltration}
	Let $G$ be a Nakajima group with Nakajima basis $\{x_1,\ldots,x_n\}$ of $V^*$. We shall construct a filtration of the form appearing in Proposition \ref{proposition:ourcompositionseries} (with respect to the basis $\{x_1,\ldots,x_n\}$), such that each $G_i$ in the filtration is a Nakajima group with respect to the same basis. We have
	$$G=P_nP_{n-1}\cdots P_1:=\{g_ng_{n-1}\cdots g_1\mid g_k\in P_k\}$$
where $P_k=\{g\in G\mid gx_j=x_j \text{ for all }j\neq k\}$.

For any subgroup $H$ of $G$, let us define $P_k(H)=P_k\cap H$. Then $H$ is
a Nakajima group with same Nakajima basis as $G$, if
$H=P_n(H)P_{n-1}(H)\cdots P_1(H)$. First we show that each $G_l=\{g\in
\calP\mid \beta_g\leq l\}$ is a Nakajima group. Let $g\in G_l$ and write
$g=g_ng_{n-1}\cdots g_1$ with $g_k\in P_k$. Since $g(x_k)=g_k(x_k)$, we
must have $\beta_{g_k}\leq l$, i.e., $g_k\in P_k(G_l)$. Hence $G_l$ is a
Nakajima group.

Now we describe a filtration as in Proposition
\ref{proposition:ourcompositionseries} such that each $G_i$ in the
filtration is a Nakajima group. Note that classes of the elements of the
set $\bigcup_{k=l+1}^nP_k(G_{l+1})\setminus P_k(G_l)$ generate the group
$G_{l+1}/G_l$. Let $l+1< s\leq n$ be the smallest integer such that
$P_s(G_{l+1})\setminus P_s(G_l)\neq \varnothing$. Set $G_{l, 1}=\langle
G_{l}, g\rangle$ for some $g\in P_s(G)$ with $\beta_g=l+1$. Inductively,
define a filtration
$$G_l< G_{l, 1}<\cdots<G_{l,l_0}<G_{l+1}$$
such that if $G_{l, i}\cap \big(P_k(G_{l+1})\setminus P_k(G_l)\big)\neq \varnothing$, then $P_{k-1}(G_{l+1})\setminus P_{k-1}(G_l)\subset G_{l, i}$ for all $k>l+1$ and that each quotients in the above filtration is a group of order $p$.

We claim that each $G_{l,i}$ in the above filtration is a Nakajima group.
For $g\in G_{l,i}$, write $g=g_ng_{n-1}\cdots g_1$ with $g_k\in P_k$.  We
can also write $g=h_1\tau_1\cdots h_r\tau_r$ where $h_j\in G_l$ and
$\tau_j\in \big\langle G_{l,i}\cap
\big\{\bigcup_{k=l+1}^{k_0}P_k(G_{l+1})\setminus P_k(G_l)\big\}\big\rangle
$. Suppose $l+1\leq k_0\leq n$ is the largest integer such that $G_{l,
i}\cap \big(P_{k_0}(G_{l+1})\setminus P_{k_0}(G_l)\big)\neq \varnothing$.
If $k_0<k_1\leq n$, then we have $g(x_{k_1})=h_1h_2\cdots h_r(x_{k_1})$.
Hence $\beta_{g_{k_1}}\leq l$ which implies $g_k\in P_k(G_l)$ for all
$k>k_0$. Since $\beta_{g_k}\leq l+1$, by definition of $G_{l,i}$, we have
$g_k\in P_k(G_{l,i})$ for all $k<k_0$. This forces $g_{k_0}\in G_{l,i}$.
Hence each $G_{l,i}$ is a Nakajima group.
\end{example}

\begin{example}
	\label{example:notInertia}
Let $\Bbbk$ be a field of characteristic $3$ and $V$ is a $\Bbbk$-vector
space of dimension $5$. Let $G$ be  a $3$-group generated by $\langle
g_1,g_2,g_3\rangle$ with actions on $V^*=\Bbbk\langle
x_1,\ldots,x_5\rangle$ given as follows:
\begin{align*}
g_1(x_i)=x_i \;\text{for}\; & i\neq 4 \;\text{and}\; g_1(x_4)=x_4+x_3+x_2;
\\
g_2(x_i)=x_i \;\text{for}\; & i\neq 5 \;\text{and}\; g_2(x_5)=x_5+x_3-x_2+x_1;
\\
g_3(x_i)=x_i \;\text{for}\; & i\neq 4, 5 \;\text{and}\;
g_3(x_4)-x_4=g_3(x_5)-x_5=-x_3-x_2-x_1.
\end{align*}

For a prime ideal $\frakq$ of $S$, write $I_\frakq$ for its inertia
subgroup, $\{ g \in G \mid (g-1)s \in \frakq ;\text{for all}\; s \in S \}$.
We use this representation to show that the filtration in Proposition
\ref{proposition:ourcompositionseries} need not be same as the filtration
obtained by refining the filtration by the inertia groups of the ideals
$(x_1,\ldots,x_i)$, i.e., the group generated by $\{g\in G\mid \beta_g\leq
i\}$.
We can check that $I_{(x_1, x_2)}$ is
a cyclic group of order $3$ generated
by $g_1g_2g_3$ which is not a pseudoreflection. Hence no refinement of the
filtration $I_{(x_1, x_2)} \subsetneq I_{(x_1,x_2,x_3)} = G$
will give us a filtration of the type described in Proposition
\ref{proposition:ourcompositionseries}.
\end{example}

\ifreadkumminibib
\bibliographystyle{alphabbr}
\bibliography{kummini}

\begin{thebibliography}{KKM{\etalchar{+}}01}

\bibitem[Ben93]{BensonInvBook1993}
D.~J. Benson.
\newblock {\em Polynomial invariants of finite groups}, volume 190 of {\em
  London Mathematical Society Lecture Note Series}.
\newblock Cambridge University Press, Cambridge, 1993.

\bibitem[Bra22]{BraunPolynomialringofInvariants2022}
A.~Braun.
\newblock {$p$}-groups and the polynomial ring of invariants question.
\newblock {\em Adv. Math.}, 397:Paper No. 108120, 29, 2022.

\bibitem[Bro05]{BroerDirectSummandProperty2005}
A.~Broer.
\newblock The direct summand property in modular invariant theory.
\newblock {\em Transform. Groups}, 10(1):5--27, 2005.

\bibitem[Bro06]{BroerHypersurfaces2006}
A.~Broer.
\newblock Hypersurfaces in modular invariant theory.
\newblock {\em J. Algebra}, 306(2):576--590, 2006.

\bibitem[Bro10]{BroerInvThyAbelianTransvGps2010}
A.~Broer.
\newblock Invariant theory of abelian transvection groups.
\newblock {\em Canad. Math. Bull.}, 53(3):404--411, 2010.

\bibitem[CW11]{CampbellWehlauModularInvThy11}
H.~E. A.~E. Campbell and D.~L. Wehlau.
\newblock {\em Modular invariant theory}, volume 139 of {\em Encyclopaedia of
  Mathematical Sciences}.
\newblock Springer-Verlag, Berlin, 2011.
\newblock Invariant Theory and Algebraic Transformation Groups, 8.

\bibitem[Sing]{Singular}
W.~Decker, G.-M. Greuel, G.~Pfister, and H.~Sch\"onemann.
\newblock {\sc Singular} {4-1-1} --- {A} computer algebra system for polynomial
  computations.
\newblock \url{http://www.singular.uni-kl.de}, 2018.

\bibitem[DK15]{DerksenKemperComputationalInvariantThy2ed2015}
H.~Derksen and G.~Kemper.
\newblock {\em Computational invariant theory}, volume 130 of {\em
  Encyclopaedia of Mathematical Sciences}.
\newblock Springer, Heidelberg, enlarged edition, 2015.
\newblock With two appendices by Vladimir L. Popov, and an addendum by Norbert
  A'Campo and Popov, Invariant Theory and Algebraic Transformation Groups,
  VIII.

\bibitem[ES18]{ElmerSezerLocallyFiniteDeriv2016}
J.~Elmer and M.~Sezer.
\newblock Locally finite derivations and modular coinvariants.
\newblock {\em Q. J. Math.}, 69(3):1053--1062, 2018.
\newblock arXiv:1605.06363.

\bibitem[M2]{M2}
D.~R. Grayson and M.~E. Stillman.
\newblock Macaulay 2, a software system for research in algebraic geometry,
  2006.
\newblock Available at \url{http://www.math.uiuc.edu/Macaulay2/}.

\bibitem[Kem02]{KemperLoci2002}
G.~Kemper.
\newblock Loci in quotients by finite groups, pointwise stabilizers and the
  {B}uchsbaum property.
\newblock {\em J. Reine Angew. Math.}, 547:69--96, 2002.

\bibitem[KKM{\etalchar{+}}01]{KemperEtAlDatabase}
G.~Kemper, E.~K\"{o}rding, G.~Malle, B.~H. Matzat, D.~Vogel, and G.~Wiese.
\newblock A database of invariant rings.
\newblock {\em Experiment. Math.}, 10(4):537--542, 2001.

\bibitem[KM22]{KumminiMondalHilbIdeals20}
M.~Kummini and M.~Mondal.
\newblock On {H}ilbert ideals for a class of {$p$}-groups in characteristic
  {$p$}.
\newblock {\em Proc. Amer. Math. Soc.}, 150(1):145--151, 2022.
\newblock arXiv:2105.10527 [math.AC].

\bibitem[LS87]{LandweberStongDepth1987}
P.~S. Landweber and R.~E. Stong.
\newblock The depth of rings of invariants over finite fields.
\newblock In {\em Number theory ({N}ew {Y}ork, 1984--1985)}, volume 1240 of
  {\em Lecture Notes in Math.}, pages 259--274. Springer, Berlin, 1987.

\bibitem[SW99]{ShankWehlauTransferModular1999}
R.~J. Shank and D.~L. Wehlau.
\newblock The transfer in modular invariant theory.
\newblock {\em J. Pure Appl. Algebra}, 142(1):63--77, 1999.

\end{thebibliography}
\else

\newcommand{\etalchar}[1]{$^{#1}$}
\def\cfudot#1{\ifmmode\setbox7\hbox{$\accent"5E#1$}\else
  \setbox7\hbox{\accent"5E#1}\penalty 10000\relax\fi\raise 1\ht7
  \hbox{\raise.1ex\hbox to 1\wd7{\hss.\hss}}\penalty 10000 \hskip-1\wd7\penalty
  10000\box7}

\fi

\end{document}